\newcommand{\setbuilder}[2]{\left\{#1\ \colon #2\right\}}
\DeclareMathOperator{\diam}{diam}
\newcommand{\R}{{\mathbb R}}
\newcommand{\eps}{{\varepsilon}}
\newtheorem{theorem}{Theorem}
\newtheorem{claim}{Claim}
\newtheorem{lemma}{Lemma}
\newtheorem{conjecture}{Conjecture}
\theoremstyle{definition}
\newtheorem{definition}{Definition}
\newtheorem{remark}{Remark}
\newtheorem{thm}{Theorem}
\newtheorem{obs}{Observation}
\title{On the Erd\H os--Purdy problem and the Zarankiewitz problem for semialgebraic graphs}
\author{N\'ora Frankl\thanks{Carnegie Mellon University, Pittsburgh, US and Moscow Institute of Science and Technology, Russia. Email: {\tt nfrankl@andrew.cmu.edu } 
} \and Andrey Kupavskii\thanks{G-SCOP, CNRS, University Grenoble-Alpes, France and Moscow Institute of Physics and Technology, Russia;  Email: {\tt kupavskii@yandex.ru} \ \  Research of Andrey Kupavskii is supported by the grant RSF N 21-71-10092, \url{https://rscf.ru/project/21-71-10092/}}}
\begin{document}
\date{}
\maketitle
\begin{abstract}
Erd\H os and Purdy, and later Agarwal and Sharir, conjectured that any set of $n$ points in $\R^{d}$ determine at most $Cn^{d/2}$ congruent $k$-simplices for even $d$. We obtain the first significant progress towards this conjecture, showing that this number is at most $C n^{3d/4}$ for $k<d$. As a consequence, we obtain an upper bound of $C n^{3d/4+2}$ for the number of similar $k$-simplices determined by $n$ points in $\R^d$, which improves the results of Agarwal, Apfelbaum, Purdy and Sharir. This problem is motivated by the problem of exact pattern matching.

We also address Zarankiewicz-type questions of finding the maximum number of edges in  semi-algebraic graphs with no  $K_{u,u}$. Here, we  improve the previous result of Fox, Pach, Sheffer, Suk, and Zahl, and Do for $d\le 4$, as well as for any $d$ and moderately large $u$. We get an improvement of their results for any $d$ and $u$ for unit-distance graphs, which was one of the main applications of their results.

From a more general prospective, our results are proved using classical cutting techniques. In the recent years, we saw a great  development of the polynomial partitioning method in incidence geometry that followed  the breakthrough result by Guth and Katz. One consequence of that development is that the attention of the researchers in incidence geometry swayed in polynomial techniques. In this paper, we argue that there is a number of open problems where classical techniques work better.
\end{abstract}

\newpage

\section{Intoduction}

In this paper, we study problems that are related to the following famous and fruitful Unit Distance Problem of Paul Erd\H os \cite{Erdosunit}: given a set of $n$ points on the plane, what is the largest number of pairs of them that are at unit distance apart? It is still wide open, with the best upper bound $O\left (n^{4/3}\right )$ due to Spencer, Szemer\' edi and Trotter~\cite{Spencerszemereditrotter} and the best lower bound $\mathrm{C}(2,2,n)=n^{1+\Omega(1/\log\log n)}$ due to Erd\H os himself \cite{Erdosunit}. A closely related question on the number of distinct distances, however, was almost resolved several years ago by Guth and Katz \cite{GK} using advanced algebraic techniques. The Unit Distance Problem gets somewhat simpler in $\R^3$, where the current best upper bound is $\mathrm{C}(3,2,n)= O\left (n^{295/197+\varepsilon}\right )$ due to Zahl \cite{zahl}, which is a recent improvement on the previous long-standing best upper bound of the form $O\left (n^{3/2+o(1)}\beta (n)\right )$ first due to Clarkson et al. \cite{clarkson} and then, in a slightly improved form, due to Kaplan et al. \cite{Kaplan} and Zahl \cite{unit3dzahl}. The lower bound is of the form $\Omega\left ( n^{4/3} \log \log n \right)$, see e.g. \cite{PA}.

In $\R^d$ with $d\ge 4$, the Unit Distance Problem changes its character and gets much easier since it is possible to have two sets $A,B$ of arbitrary sizes with any point from $A$ being at unit distance from $B$. (We describe this so-called Lenz construction below.) However, the problem has a natural generalisation that is non-trivial for higher dimensions. Let us denote by $\textrm{C}(d,k,n)$ the maximum, over all $k$-vertex simplices $S$ and $n$-element sets $P$ in $\R^d$, of the number of occurrences of a congruent copy of $S$ in $P.$   The following conjecture was made by Erd\H os and Purdy and Agarwal and Sharir.
\begin{conjecture}[Erd\H os and Purdy \cite{ErdosPurdy}, Agarwal and Sharir \cite{AS}]\label{conj unit simplices} For $d\geq 4$ and $k\leq d$ we have $\mathrm{C}(d,k,n)=O_d(\min\{n^k,n^{d/2}\})$ if $d$ is even, and $\mathrm{C}(d,k,n)=O_d(\min\{n^k,n^{d/2-1/6}\})$ if $d$ is odd.
\end{conjecture}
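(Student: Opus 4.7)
The plan is to induct on $k$, using that fixing the first $k-1$ vertices of a congruent copy of $S$ constrains the last vertex to a low-dimensional sphere. The base case $k=2$ is the unit distance problem in $\R^d$: for even $d\ge 4$ the classical Lenz-type argument already gives $\mathrm{C}(d,2,n)=O(n^{d/2})$, matching the conjecture. For the inductive step, let $T$ be the $(k-1)$-simplex spanned by the first $k-1$ vertices of $S$. Each congruent copy of $S$ in the point set $P$ restricts to an ordered congruent copy $T'=(p_1,\ldots,p_{k-1})$ of $T$ in $P$, and once $T'$ is fixed, the $k$-th vertex must lie on the intersection of $k-1$ spheres of prescribed radii centered at the $p_i$, which is generically a sphere $\Sigma(T')$ of dimension $d-k+1$. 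Thus
\[
\mathrm{C}(d,k,n) \;\le\; I\bigl(P,\{\Sigma(T'):T'\text{ a cong.\ copy of }T\text{ in }P\}\bigr),
\]
reducing the problem to a point--sphere incidence bound.

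By the inductive hypothesis the family $\mathcal F=\{\Sigma(T')\}$ has size $O(n^{d/2})$ whenever $k-1<d$, and the goal is to bound $I(P,\mathcal F)=O(n^{d/2})$ as well. The natural tool here is the classical cutting machinery invoked in the abstract: partition $\R^d$ into cells each meeting few members of $\mathcal F$, recurse in each cell, and aggregate. Pairwise intersections of members of $\mathcal F$ are themselves $(d-k)$-dimensional spheres, so a simultaneous induction on $k$ and on $d$ in the spirit of the Clarkson--Edelsbrunner--Guibas--Sharir--Welzl framework is natural: each level of the recursion should trade points for spheres, eventually reducing to the $k=2$ incidence bound on each cell.

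The main obstacle is that the Lenz construction already saturates the bound at $k=2$, and the spheres $\Sigma(T')$ produced by near-extremal configurations are themselves highly structured: many of them share common great-subspheres, so no generic cutting separates $P$ cleanly from $\mathcal F$ without constant-factor bleed at each recursive step. These losses compound in the induction on $k$, which is precisely why classical cutting methods give only $n^{3d/4}$ rather than $n^{d/2}$ in the present paper. Closing the gap presumably requires either a structural theorem showing that any configuration approaching the bound must essentially be a Lenz construction (so the inductive hypothesis can be strengthened in that regime), or a polynomial-partitioning input---as in the Guth--Katz treatment of distinct distances---that controls how many spheres of $\mathcal F$ can concentrate on a low-degree variety. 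For $d$ odd, the improved target $n^{d/2-1/6}$ mirrors the known Szemer\'edi--Trotter-based gain for unit distances in $\R^3$ and would have to enter the argument through the base case.
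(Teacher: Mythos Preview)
You have not written a proof, and neither has the paper: Conjecture~\ref{conj unit simplices} is explicitly stated as an open problem. The paper's main contribution toward it is Theorem~\ref{unitsimplices}, which gives $\mathrm{C}(d,k,n)=O_{d,\varepsilon}(n^{5d/8+k/8+\varepsilon})$, hence $O(n^{3d/4+\varepsilon})$ for $k\le d$, via an LP analysis layered on top of the Agarwal--Sharir recursion. There is no proof of the $n^{d/2}$ bound in the paper for you to compare against.

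Your write-up is essentially a correct diagnosis of why the conjecture is hard rather than a proof. The induction-on-$k$ reduction to point--sphere incidences you describe is precisely the skeleton of the Agarwal--Sharir framework that Section~\ref{sec5} of the paper reproduces: one treats each $P_i$ in turn as spheres, applies cuttings, and ends up with the LP \eqref{total}--\eqref{cond}. You then correctly observe that the incidence bound you would need, $I(P,\mathcal F)=O(n^{d/2})$ with $|\mathcal F|\approx n^{d/2}$, is exactly what current cutting technology cannot deliver, and that the losses compound to the $n^{3d/4}$ barrier. Your final two paragraphs are an honest admission that the argument does not close; the speculation about needing either a structure theorem for near-Lenz configurations or polynomial partitioning is reasonable commentary, not a proof step.

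Two smaller remarks. First, the base case $k=2$ is trivial for the reason you gesture at but misattribute: the bound $\mathrm{C}(d,2,n)\le n^2\le n^{d/2}$ holds because $d\ge 4$, not because of any Lenz-type argument (Lenz gives the matching lower bound, not an upper bound). Second, your claim that the incidence problem reduces cleanly to ``generically a sphere $\Sigma(T')$ of dimension $d-k+1$'' hides the real difficulty: when many of these spheres coincide or nest (as happens in Lenz-type extremal configurations), one is forced into the ``containing versus crossing'' dichotomy that drives the recursive LP in Sections~\ref{sec5}--\ref{sec6}, and it is the structured (containing) branch, not the cutting branch, that prevents the exponent from dropping to $d/2$.
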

The conjecture is nontrivial only in the case $k>d/2$. Both bounds, if true, are tight.  Indeed, we can construct sets $P$ with $\Omega_d(\min\{n^k,n^{d/2-1/6}\})$ many {\it unit simplices}, i.e., simplices in which any two vertices are at unit distance apart,\footnote{In general, it seems to be of little significance if we consider congruent simplices or unit simplices in this question.} by using a generalised Lenz construction, described below (see also \cite{PA}).

Fix $t: = \lfloor d/2\rfloor$ circles of radius $\frac{1}{\sqrt{2}}$ centered at the origin and lying in pairwise orthogonal planes, and place $\lfloor \frac{n}{t} \rfloor$ or $\lceil \frac{n}{t} \rceil$ points on each of them, so that each point  is at distance $1$  from some other point on the same circle.  As the distance between any two points of different circles is $1$, by choosing the vertices from different circles, we obtain $\Omega_d(n^k)$ unit $k$-vertex simplices for any $k\le t$.
For even $d$ and $t<k\le d$, we can select either $1$ point or a pair of points at distance $1$ from each circle, which gives a set with  $\Omega_d(n^{d/2})$ $k$-simplices. For odd $d$ and $t<k\le d$, we need to replace one of the circles  by a $2$-sphere (note that we have an unused dimension for that). Then we can construct $\Omega_d\left (n^{d/2-1/6}\right )$ unit $k$-simplices in a similar way, using an arrangement of $\lfloor \frac{n}{t} \rfloor$ points on this $2$-sphere that determines  $\Omega_d(n^{4/3})$ unit distances (see \cite{sphere}).

This question is partly motivated by the group of algorithmic {\it pattern matching} problems:  given a set $P$ of $n$ points in $\R^d$ and a pattern $Q$, determine
whether $P$ contains a congruent or similar copy of $Q$, and, in some cases, find all of these copies. A standard approach to this problem  is to pick a spanning simplex $\Delta$ in $Q$, find all copies of $\Delta$ in $P$, and then test for each of them whether it extends to $Q$. The running time of such algorithms depends, first, on the time that is needed to find all the occurrences of $\Delta$ and, second, on the number of occurrences of $\Delta$ in $P$. We note that the first part could be done efficiently using cuttings (see \cite{AS} for details). For more about these algorithms and the connection between combinatorial geometry problems and pattern recognition, we refer the reader to \cite{Bras}.

Improving upon several bounds that were known for some specific instances of the problem, Agarwal and Sharir~\cite{AS} confirmed Conjecture~\ref{conj unit simplices} up to an arbitrarily small additive error of $\varepsilon$ in the exponent for $d=4,6$ and $k\le d-1$, as well as for $k=3$ and $d=5$.

One of the main results of this paper is as follows.

\begin{theorem}\label{unitsimplices} For any $d\ge 4$, $k\leq d+1$ and $\varepsilon>0$, we have  $\mathrm{C}(d,k,n)=O_{d,\varepsilon}\left ( n^{\frac{5}{8}d+\frac{1}{8}k+\varepsilon}\right )$.
\end{theorem}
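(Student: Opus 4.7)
The plan is to argue by induction on $k$, relating the number of congruent $k$-simplices to an incidence count between $P$ and a family of ``extension spheres,'' and attacking that count with classical cuttings as advertised in the introduction. The base cases $k=1,2$ are handled by trivial counting and known unit-distance bounds (for example the Lenz-type bound $\mathrm{C}(d,2,n)=O_d(n^2)$ for $d\ge 4$), which beat what the theorem claims there with room to spare.

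For the inductive step $k-1\to k$, fix an ordering of the target simplex $T=(t_1,\dots,t_k)$. Every ordered congruent copy $(p_1,\dots,p_k)$ of $T$ in $P$ is obtained by extending an ordered congruent copy $(p_1,\dots,p_{k-1})$ of $(t_1,\dots,t_{k-1})$ (of which there are at most $m:=\mathrm{C}(d,k-1,n)$ by induction) by a point $p_k\in P$ lying on the extension locus
\[
\sigma(p_1,\dots,p_{k-1}):=\bigcap_{i=1}^{k-1}\bigl\{x\in\R^d:|x-p_i|=|t_k-t_i|\bigr\},
\]
which is a $(d-k+1)$-sphere contained in a $(d-k+2)$-dimensional affine flat $H(p_1,\dots,p_{k-1})\subset\R^d$. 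Therefore $\mathrm{C}(d,k,n)$ is at most the number of point-sphere incidences between $P$ and the family $\{\sigma(p_1,\dots,p_{k-1})\}$ of size at most $m$. It will also be convenient to note that this incidence graph is $K_{u,u}$-free for a constant $u=u(d,k)$, because $u$ common points of $u$ distinct extension spheres must lie in the intersection of $u$ such $(d-k+1)$-spheres in $\R^d$, which is a set of bounded cardinality once $u$ exceeds $d-k+2$ plus a small slack.

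I would bound the incidence count via a hierarchical $(1/r)$-cutting of the arrangement of the containing flats $H(\cdot)$ in $\R^d$ (rather than of the spheres themselves), since classical cuttings handle affine flats cleanly. Within each cell one has roughly $n/r^d$ points and the cell is met by roughly $m/r^{c}$ flats for an appropriate $c=c(d,k)$; one then recurses inside the cell using the inductive bound and sums the boundary contribution separately. Solving the resulting recurrence and optimizing the cutting parameter $r$ against the inductive bound on $m$, together with iterating over $k$, should yield the exponent $5d/8+k/8$, with the $+\varepsilon$ absorbing accumulated polylogarithmic overhead. The main obstacle is the precise balancing: the coefficient $5/8$ almost certainly emerges from optimizing the cutting size against the sphere-incidence bound in a single cell, and it is here that the paper's improved classical cutting machinery must outperform off-the-shelf Szemer\'edi--Trotter, Fox--Pach--Sheffer--Suk--Zahl, or polynomial partitioning to achieve $5/8$ rather than a weaker exponent; the $K_{u,u}$-freeness noted above is what makes the improved Zarankiewicz-type input available at each level of the cutting recursion.
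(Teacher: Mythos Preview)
Your proposal is not a proof but an outline with the central step left blank. You set up an induction on $k$, reduce $\mathrm{C}(d,k,n)$ to a point--sphere incidence problem between $n$ points and $m=\mathrm{C}(d,k-1,n)$ extension spheres, and then say that ``solving the resulting recurrence and optimizing the cutting parameter $r$ \ldots\ should yield the exponent $5d/8+k/8$.'' But you never produce that calculation, and you explicitly flag it as ``the main obstacle.'' In fact there is no reason to believe a one-step induction $k-1\to k$ via a single incidence bound would ever output the constant $5/8$: if at each step you plug in a Zarankiewicz-type bound of the form $(mn)^{\beta}$ you get a recursion $\alpha_k=\beta(1+\alpha_{k-1})$ whose solution has nothing to do with $5d/8+k/8$. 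Your $K_{u,u}$-freeness claim is also unjustified as stated: distinct $(k-1)$-tuples can define the same extension sphere, and a single extension sphere can contain many points of $P$ (think of the Lenz construction), so the incidence bipartite graph between ordered $(k-1)$-simplices and points is not $K_{u,u}$-free for any fixed $u$.

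The paper's proof is structurally different. Following Agarwal--Sharir, it does not induct on $k$ but treats all $k$ colour classes $P_1,\dots,P_k$ simultaneously, recording the ``compatibility graph'' $G$ on $[k]$ (an edge $\{i,j\}$ means every $p\in P_i$ is at unit distance from every $q\in P_j$) and the ``profile'' $\pmb\lambda$ (where $\lambda_i$ is the dimension of the smallest sphere containing $P_i$). Cutting is applied in $k$ rounds, one per class, and leads to a linear program in variables $x_1,\dots,x_k$ whose optimum $\zeta(G,\pmb\lambda)$ controls the exponent. The new content of the paper---and the source of the number $5/8$---is a combinatorial analysis of this LP: one takes a maximum-weight maximum clique $K$ in $G$, sets $x_s=\tfrac12$ on $K$, sets carefully chosen small values on the ``near-clique'' sets $L_i$, uses the geometric constraint $\sum_{i\in K}\lambda_i\le d-|K|$, and bounds $\langle\pmb\lambda,\mathbf{x}\rangle$ by $\tfrac{5}{8}d+\tfrac{1}{8}k$ via AM--GM. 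This LP argument is the heart of the proof and has no counterpart in your sketch; the induction in the paper is on $(k,d)$ and on the number of edges of $G$, not on $k$ alone via incidence counting.
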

In particular, this result implies $\mathrm{C}(d,k,n)=O_{d,\varepsilon}\left ( n^{\frac{3}{4}d+\varepsilon}\right)$ for $k\le d$. The first part of the proof follows the approach of Agarwal and Sharir \cite{AS}, however, in the second part we make better use of the geometric and combinatorial information that we possess and progress further. 

As a corollary, we obtain improved upper bounds on the maximum possible number of similar simplices. Let $\mathrm{S}(d,k,n)$ be the maximum number of simplices  spanned by a set of $n$ points in $\mathbb{R}^d$ that are similar to a given $k$-vertex simplex. Since a set of $n$ points determines at most $\binom{n}{2}$ distances, we have $\mathrm{S}(d,k,n)\leq n^2
\cdot \mathrm{C}(d,k,n)$, and thus in general we obtain $\mathrm{S}(d,k,n)=O_{d,\varepsilon}\left ( n^{\frac{3}{4}d+2+\varepsilon}\right )$ as a corollary of Theorem \ref{unitsimplices}. This significantly improves upon the previously  best known upper bounds for $k=d$ and $k=d+1$, which are $\mathrm{S}(d,d,n)=O_d(n^{d-8/5})$ and $\mathrm{S}(d,d+1,n)=O_d^*(n^{d-72/55})$, due to Agarwal, Apfelbaum, Purdy and Sharir \cite{similar}.

While the behavior of $C(d,k,n)$ does not seem to change much if we replace general congruent simplices with unit simplices in the definition, there is another natural setting in which the answer is significantly different. Let us define  $\mathrm{DIAM}(d,k,n)$ to be the maximum number of unit $k$-simplices spanned by a set of $n$ points {\it of diameter $1$}. Swanepoel determined  the exact value of this quantity for $k =2$ \cite{Swanepoel}. It was a conjecture of Schur \cite{schur} that $\mathrm{DIAM}(d,d,n) = n$, proved by Polyanskii and the second author \cite{proofschur}. In \cite{schur} it was also shown that $\mathrm{DIAM}(d,d+1,n)=1$. Here we confirm, up to a small error term, Conjecture~\ref{conj unit simplices} in this `diameter graph' settting. 

\begin{theorem}\label{diam simplices} For any $2\leq k \leq d+1$ and $\varepsilon>0$ we have $\mathrm{DIAM}(d,k,n)=O_{d,\varepsilon}\left (n^{\frac{d}{2}+\varepsilon}\right )$.
\end{theorem}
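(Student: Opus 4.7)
The plan is to prove $\mathrm{DIAM}(d,k,n)=O_{d,\varepsilon}(n^{d/2+\varepsilon})$ by induction on $d$, handling the extreme values of $k$ separately. For $d\le 3$ the Hopf--Pannwitz and Vázsonyi theorems give $O(n)$ unit distances in a diameter-$1$ set, so at most $O(n)$ unit $k$-simplices for every $k\ge 2$. For the boundary values in each dimension I invoke the already-cited $\mathrm{DIAM}(d,d,n)=n$ (Polyanskii and the second author) and $\mathrm{DIAM}(d,d+1,n)=1$. The case $k=2$ with $d\ge 4$ is immediate since $\binom n 2\le n^{d/2}$.

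The substantive case is $3\le k\le d-1$ in dimension $d\ge 4$. The strategy is a hyperplane-slicing reduction. Fixing a unit edge $uv$ of the diameter-$1$ set $P$, the remaining $k-2$ vertices of any unit $k$-simplex through $uv$ lie on the $(d-2)$-sphere $\Sigma_{uv}$ of radius $\sqrt 3/2$ inside the perpendicular bisector hyperplane $H_{uv}\cong\mathbb R^{d-1}$. The induced set $Q_{uv}:=P\cap \Sigma_{uv}$ inherits diameter at most $1$ in $\mathbb R^{d-1}$, and its unit $(k-2)$-simplices are in bijection with the unit $k$-simplices of $P$ containing $uv$. Applying the inductive hypothesis in $\mathbb R^{d-1}$ (where for $k=3$ the trivial $\mathrm{DIAM}(d-1,1,m)=m$ replaces the induction) gives
\[
\binom{k}{2}\,\mathrm{DIAM}(d,k,n) \;\le\; C\sum_{uv\ \text{unit}} |Q_{uv}|^{(d-1)/2+\varepsilon}.
\]

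The hard part is to show this edge-sum is $O(n^{d/2+\varepsilon})$. A naive power-mean estimate using $|Q_{uv}|\le n$ together with $\sum_{uv}|Q_{uv}|=3\,\mathrm{DIAM}(d,3,n)$ only yields $O(n^{(d+3)/2})$, overshooting the target by $n^{3/2}$. To make up this gap, the plan is a dyadic decomposition of the unit edges by the size of $|Q_{uv}|$: for small $|Q_{uv}|$ the direct estimate already suffices, and for large $|Q_{uv}|$ one uses that $\{u,v\}\cup Q_{uv}$ forms a $K_{2,|Q_{uv}|}$ in the diameter graph, forcing $Q_{uv}$ to lie on a low-dimensional spherical section to which a stronger lower-dimensional instance of the inductive bound applies. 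A separate preliminary argument is needed for $k=3$, where the reduction to $k-2=1$ is tautological; there one instead sums over vertices $v$ and uses that the unit-distance neighbourhood of $v$ sits in a spherical cap of angular radius $\pi/3$ on the unit sphere around $v$, so that the number of unit edges inside it is again controlled by a $(d-1)$-dimensional induction. The principal obstacle will be making the dense-regime argument save exactly the missing $n^{3/2}$ factor; this is the step that must use the diameter-$1$ hypothesis in a way that the generic bound of Theorem~\ref{unitsimplices} cannot emulate.
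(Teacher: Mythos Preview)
Your proposal is not a proof but a plan with an explicitly acknowledged gap, and that gap is the entire content of the theorem. The edge-slicing reduction you describe---fixing a unit edge $uv$ and passing to the $(d-2)$-sphere $\Sigma_{uv}$---makes no use of the diameter-$1$ hypothesis beyond what is already true for arbitrary unit-distance graphs: the same reduction applies verbatim if $P$ is any set and $uv$ any unit pair. But for unit-distance graphs the correct exponent is only $3d/4$ (Theorem~\ref{unitsimplices}), not $d/2$, so an argument that treats the two settings identically cannot possibly reach $d/2$. Your ``dense-regime'' sketch observes that a large $Q_{uv}$ forces $Q_{uv}$ onto a $(d-2)$-sphere, but $Q_{uv}$ is already on a $(d-2)$-sphere by construction; no further dimension drop comes for free, and nothing in the outline identifies what additional constraint the diameter condition imposes. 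The $n^{3/2}$ deficit you compute is real and there is no mechanism in the proposal to recover it.

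The paper's proof is structurally different and exploits the diameter condition in a concrete way. Because $P$ has diameter $1$, every point of $P$ lies in the intersection of \emph{all} the closed unit balls centred at points of $P$. This convexity is the crucial input: when one builds a $1/r$-cutting for the unit spheres, one only needs to decompose this intersection of balls, and after lifting spheres to hyperplanes the relevant cell is a convex polytope with $r$ facets. By the Upper Bound Theorem its face count is $O(r^{\lfloor d/2\rfloor})$ rather than $O(r^d)$; this is Theorem~\ref{diam cutting}. Plugging this improved cutting into the Agarwal--Sharir recursion of Section~\ref{recdiam} replaces each $\lambda_i$ by $\lfloor(\lambda_i+1)/2\rfloor$ in the associated LP, and then a short argument (Theorem~\ref{thm diam weight}) shows the LP optimum is at most $d/2$. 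In short: the saving comes not from clever conditioning on edges but from a diameter-specific cutting lemma, and your proposal contains no analogue of it.
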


We, however, believe that something stronger should hold for such simplices.

\begin{conjecture}\label{conj diam} For $d\geq 2$ and $k\leq d$ we have $\mathrm{DIAM}(d,k,n)=O_d(\min\{n^k,n^{d-k+1}\})$ and, for odd $d$ and $k=(d+1)/2$, we have $\mathrm{DIAM}(d,k,n)=O_d(n^{k-1})$.
\end{conjecture}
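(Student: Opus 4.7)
The plan is to prove the conjecture by a double induction on $d$ and $k$, with the base cases $k=d$ (Schur's conjecture, proved in \cite{proofschur}, giving $\mathrm{DIAM}(d,d,n)=n$), $k=d+1$ (where $\mathrm{DIAM}(d,d+1,n)=1$), and $k=2$ in low dimensions (classical Hopf--Pannwitz/Gr\"unbaum diameter graph bounds). For $k\le d/2$ the trivial bound $O(n^k)$ already matches the conjecture, so the substance lies in proving $\mathrm{DIAM}(d,k,n)=O(n^{d-k+1})$ for $k>d/2$.

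For the inductive step, the natural move is: fix a vertex $v$ of the diameter-$1$ set $P$ and count the unit $k$-simplices through $v$. These are in bijection with unit $(k-1)$-simplices in $P\cap S(v,1)$, a diameter-$\le 1$ set lying on the unit sphere around $v$. Double counting then yields $k\cdot\mathrm{DIAM}(d,k,n)=\sum_{v\in P}\#\{\text{unit }(k-1)\text{-simplices in }P\cap S(v,1)\}$, and one would like to feed this into a spherical analogue of the inductive hypothesis. A naive application loses a factor of $n$ (by convexity of the induction bound when summing $(n'_v)^{d-k+1}$ over $v$), so one must refine the double counting, either by charging each simplex to a carefully chosen ``apex'' vertex or by using that the total incidence $\sum_v|P\cap S(v,1)|=2\,\mathrm{DIAM}(d,2,n)$ is itself controlled, thereby exploiting that only few vertices $v$ can have large $|P\cap S(v,1)|$.

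The main obstacle will be twofold. First, one has to transfer the Euclidean inductive bound to the spherical setting: a diameter-$1$ subset of the unit sphere lies in a spherical cap of angular radius $\pi/3$, which can be projected into a ball in $\R^{d-1}$, but the projection distorts distances, and in particular unit distances do not go to unit distances. The cleanest fix is probably to formulate and prove a spherical version of the conjecture in parallel. Second, the strengthened bound $O(n^{k-1})$ for odd $d$ and $k=(d+1)/2$ corresponds to the crossover point of the $\min$ and requires an additional saving of one power of $n$; this parallels the extra saving on the $2$-sphere in the classical unit-distance problem \cite{sphere}, and is likely the deepest case. A natural route, in keeping with the classical-cutting theme of this paper, would be to partition the spherical cap into small-diameter patches via an $\varepsilon$-cutting, apply the Euclidean bound inside each patch, and glue the estimates together, controlling the inevitable losses at patch boundaries by iterating the cutting at successively finer scales.
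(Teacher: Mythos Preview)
This statement is a \emph{conjecture} in the paper, not a theorem: the paper does not prove it and offers no proof to compare against. What the paper \emph{does} prove is the weaker bound $\mathrm{DIAM}(d,k,n)=O_{d,\varepsilon}(n^{d/2+\varepsilon})$ (Theorem~\ref{diam simplices}), via the LP/cutting machinery of Sections~\ref{recdiam} and~\ref{sec8}, together with one instance of the conjecture, $\mathrm{DIAM}(5,3,n)=\Theta(n^2)$ (Theorem~\ref{diam-triangle}). For $k>d/2+1$ the conjectured exponent $d-k+1$ is strictly smaller than $d/2$, so the conjecture is genuinely stronger than anything established in the paper.

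Your proposal is a plan, not a proof, and the gaps you flag are real and unresolved. The core difficulty is exactly the one you name: fixing a vertex $v$ and passing to $P\cap S(v,1)$ drops the dimension by one but also drops $k$ by one, so the target exponent $(d-1)-(k-1)+1=d-k+1$ stays the same, and a naive sum over $v$ overshoots by a full power of $n$. Your suggested fixes (charging to a chosen apex, or exploiting $\sum_v |P\cap S(v,1)|=2\,\mathrm{DIAM}(d,2,n)$) do not obviously recover this loss: controlling $\sum_v n_v$ does not control $\sum_v n_v^{d-k+1}$ unless $d-k+1\le 1$, i.e.\ $k\ge d$, which are precisely the already-known cases. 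Nor is there a clean spherical analogue to induct on, since the projection you describe destroys the unit-distance structure. In short, the induction as sketched does not close, and the paper's authors, who pose this as an open problem, presumably did not see how to close it either.
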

The bounds, if true, are sharp, which can be shown using a modification of the generalised Lenz construction.\footnote{Unlike the general congruent simplices case, we can have only one diameter formed by the points on a given circle, which is the reason why the exponent decreases as $k>d/2$ increases.}
The known values of $\mathrm{DIAM}(d,k,n)$ are in accord with this conjecture.\\

Consider the following natural generalisation of the pattern matching problem: imagine that we search for an $m$-vertex pattern from a class $\mathcal Q$ that includes all patterns that have specified distances between {\it certain} pairs of vertices. (In the exact pattern matching, distances between {\it any} two pairs of points are specified.) We can encode such a pattern by a graph with vertices $\{1,\ldots, m\}$ and edges corresponding to the pairs of points that have a specified distance between them. A question of such type was recently considered by Palsson, Senger and Sheffer \cite{chains}. They asked for the maximum number of chains of length $k$ that can occur on a set of $n$ points in $\R^2$ or $\R^3$. In the above terms, a chain corresponds to a graph of a path. The results of \cite{chains} were sharpened to essentially optimal bounds by the authors of the present paper \cite{Paths}. There was a flurry of activity around this problem recently. Some of the variants included studying this question for trees and cycles. We refer the reader to \cite{Paths} for more details and references therein.

Note that the question for chains becomes trivial in $\R^d$ with $d\ge 4$ as we have Lenz-type constructions. We get a lower and upper bound of order $n^k$ for chains with $k$ vertices. In general, for any pattern $\Delta$ on $k$ vertices, one always has the trivial upper bound $n^k$ on the maximum possible number $\Delta(d,n)$ of congruent copies of $\Delta$  that can be determined by a set of $n$ points in $\mathbb{R}^d$. Below we give a characterisation of those sets $\Delta$ for which the trivial exponent $k$ can be improved. 
Let $G_{\Delta}$ be the graph on $k$ vertices, such that a pair of vertices form an edge if and only if the distance between the corresponding points in $\Delta$ is fixed.

We call a graph $G$ an \emph{orthogonality graph of planes in $\mathbb{R}^d$} if we can correspond to each vertex a  $2$-dimensional linear subspace in $\mathbb{R}^d$, such that a pair of vertices forms  an edge in $G$ if and only if the corresponding subspaces are orthogonal.

\begin{theorem}\label{nontrivi}
For a pattern $\Delta$ on $k\geq 1$ vertices we have $\Delta(d,n)=o_d(n)$ if and only if $G_{\Delta}$ is not a subgraph of an orthogonality graph in $\mathbb{R}^d$. Moreover, if $G_{\Delta}$ is not a subgraph of an orthogonality graph in $\mathbb{R}^d$, then
\[\Delta(d,n)=O_d(n^{k-\min \{1,4/d\}}).\]
\end{theorem}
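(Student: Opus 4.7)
The plan is to handle the two directions of the ``if and only if'' separately and then derive the quantitative upper bound.

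For the direction ``$G_\Delta$ embeds $\Rightarrow \Delta(d,n)$ is not $o(n^k)$'', I would use a generalized Lenz construction. Starting from any orthogonal representation $v\mapsto P_v\subseteq\mathbb{R}^d$ of $G_\Delta$ by $2$-planes, I first fix an actual embedding of $\Delta$ with vertex $v$ placed at a point $x_v\in P_v$. The key algebraic input is that for every edge $uv\in E(G_\Delta)$, orthogonality of $P_u$ and $P_v$ forces $\|x_u-x_v\|^2=\|x_u\|^2+\|x_v\|^2$, so the system for the squared norms is consistent with any realization of $\Delta$ in $\mathbb{R}^d$. Then I replace each $x_v$ by $\lfloor n/k\rfloor$ points on the circle $C_v\subseteq P_v$ of radius $\|x_v\|$. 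Since a rotation inside $P_v$ is an isometry of $\mathbb{R}^d$ that fixes every $x_u$ with $P_u\perp P_v$, every selection of one point from each circle yields a congruent copy of $\Delta$, producing $\Omega(n^k)$ copies.

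For the converse direction, together with the quantitative bound, I would argue by induction on $k$. If $G_\Delta$ has an isolated vertex $v$, then $G_{\Delta\setminus v}$ remains non-embeddable (any orthogonal representation extends by choosing $P_v$ arbitrarily), so the trivial estimate $\Delta(d,n)\le n\cdot(\Delta\setminus v)(d,n)$ transfers the bound to the smaller pattern. Otherwise I locate a minimal non-embeddable induced subgraph $H\subseteq G_\Delta$ on $s$ vertices --- prototypically $H=K_s$ with $s=\lfloor d/2\rfloor+1$, though possibly of other types --- and bound $\Delta(d,n)$ by (number of copies of $H$) $\times$ (extensions to the remaining $k-s$ vertices). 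The number of copies of $H$ is controlled via Theorem~\ref{unitsimplices}, classical unit-distance bounds for small $d$, or a direct cutting argument exploiting the rigidity imposed by non-embeddability; balancing $s\approx d/2+1$ with the per-vertex saving yields the exponent $\min\{1,4/d\}$.

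The main obstacle will be two-fold. First, one needs a clean understanding of minimally non-embeddable graphs: beyond cliques $K_s$ with $s>\lfloor d/2\rfloor$ there can be subtler obstructions (for example, via algebraic dependencies among representing planes) that must either be classified or absorbed into the induction. Second, for $d\ge 5$ the target exponent $k-4/d$ is slightly sharper than what follows from plugging $H=K_s$ directly into Theorem~\ref{unitsimplices}, so closing the gap requires a tailored cutting/partitioning argument that leverages the specific non-embeddability witness, rather than the generic simplex-counting machinery used elsewhere in the paper.
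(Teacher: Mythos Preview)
Your Lenz construction for the ``if'' direction is essentially the paper's argument, though your care about realizing arbitrary edge-lengths via the norm equation $\|x_u-x_v\|^2=\|x_u\|^2+\|x_v\|^2$ is a nice touch the paper leaves implicit.

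The ``only if'' direction and the quantitative bound, however, are handled completely differently in the paper, and your proposal has the genuine gaps you yourself flag. You plan to locate a minimal non-embeddable induced subgraph $H$, bound copies of $H$ (via Theorem~\ref{unitsimplices} or an ad hoc cutting), and multiply by $n^{k-s}$. But as you note, there is no classification of minimal non-embeddable graphs, and even in the clique case $H=K_s$ with $s=\lfloor d/2\rfloor+1$, plugging into Theorem~\ref{unitsimplices} gives an exponent of roughly $k+3d/16-7/8$, which is \emph{worse} than $k$ for $d\ge 5$ and nowhere near $k-4/d$. So the ``balancing yields $\min\{1,4/d\}$'' claim is not substantiated, and the ``tailored cutting argument'' you allude to is the whole difficulty.

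The paper avoids both obstacles entirely. It runs the Agarwal--Sharir cutting recurrence of Section~\ref{sec5} directly on the $k$-partite pattern count $\psi_{\Delta,d}^{(G')}(n)$, where $G'$ is a subgraph of $G_\Delta$ recording which pairs $(P_i,P_j)$ are already at the prescribed distance in full bipartite fashion; the recurrence adds edges to $G'$ one at a time. The crucial point is Theorem~\ref{thm nontrivi weight}: whenever $G'$ is a \emph{proper} subgraph of $G_\Delta$, so there exists a single edge $(i,j)\in E(G_\Delta)\setminus E(G')$, one can write down an explicit feasible vector $\mathbf{x}$ (put weight on $i,j$ and $1/\lambda_\ell$ elsewhere) giving LP value at most $k-\min\{1,4/d\}$. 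No structural information about non-embeddability is used here --- only the existence of one uncovered pattern edge. Non-embeddability enters only at the base case $G'=G_\Delta$, where Lemma~\ref{spheres} forces $|P_i|\le 2$ for some $i$, giving the trivial bound $O(n^{k-1})$. Thus the paper never needs to classify obstructions or invoke simplex-counting bounds; the saving of $\min\{1,4/d\}$ drops straight out of a two-variable LP computation on a single edge.
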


Note that in the theorem above, the number of vertices $k$ of $\Delta$ is not necessarily bounded by $d$. We also note that orthogonality graphs have been studied for example by Erdős and Simonovits \cite{ES} and can be fairly complicated. In particular, it is not very difficult using a Frankl--Wilson type reasoning to show that the chromatic number of such graphs can be exponential in $d$.

\bigskip

\noindent \textbf{Zarankiewicz's problem for semi-algebraic graphs}\vskip+0.1cm
The classical Zarankiewicz's problem asks for the maximum number of edges in a bipartite graph with no $K_{s,t}$ as a subgraph, and is one of the central unresolved (and notoriously hard) problems in extremal combinatorics. The classical bound due to K\"ov\'ari--S\'os-Tur\' an states that such graphs have at most $O_t(n^{2-1/s})$ edges for $s<t$. There are few cases when the exponent in this bound is known to be optimal: it is the case for $t>(s-1)!$, which is due to Alon, R\' onyai  and Szab\'o \cite{ARS}, improving on an earlier construction due to Koll\'ar,  R\' onyai  and Szab\'o \cite{KRS}. Recently, Boris Bukh \cite{BB} managed to get the lower bound of the same form for $t>9^{(1+o(1))s}$. 

In geometric scenarios, however, the situation is sometimes much clearer, and we can hope for far better upper bounds. Let us give the relevant definition of a `geometric' graph. 

A graph $G=(V,E)$ is a {\it semi-algebraic graph in $\mathbb{R}^d$ of description complexity $t$} if $V\subseteq \mathbb{R}^d$ and there exist polynomials $f_1,\dots,f_t\in \mathbb{R}[x_1,\dots,x_{2d}]$ of degree at most $t$ and a Boolean function $\Phi(X_1,\dots,X_t)$ such that \[(p,q)\in E \Leftrightarrow \Phi\left (f_1(p,q)\geq 0, \dots, f_t(p,q)\geq 0\right )=1.\]

Semi-algebraic graphs arise naturally in computational geometry and its applications to range searching, motion planning etc. (see, e.g., \cite{AMat, AMS}).
Combinatorial properties of semi-algebraic graphs have been studied in the past few years, see for example \cite{repcomp, Conlon, Fox, density, regularity}. 

For semi-algebraic graphs, forbidding $K_{u,u}$, one can obtain upper bounds that  depend much more on  the dimension $d$ of the ambient space than on $u$ itself.\footnote{We note that forbidding $K_{u,u}$ in semi-algebraic graphs may be seen as a common abstraction of different general position requirements on the underlying set of points. E.g., if points in $\R^d$ are in general position then there is a unique hyperplane passing through any $d$ points, which means that the graph of incidences of points and hyperplanes is $K_{d,2}$-free.
} Although this problem was studied for particular classes of semi-algebraic graphs before, and especially for point-hyperplane incidence graphs \cite{Aps, BK,Chaz2} and intersection graphs \cite{Mustafapach}, it was suggested in full generality by Fox, Pach, Sheffer, Suk and Zahl~\cite{Fox}. The main result of \cite{Fox} is as follows.
\begin{thm}[\cite{Fox}] Let $G=(P_1,P_2,E)$ be a semi-algebraic bipartite graph in $\mathbb{R}^d$ with $|P_1|=m$, $|P_2|=n$ of description complexity $t$. If $G$ is $K_{u,u}$-free, then for any $\varepsilon>0$
\[|E(G)|=O_{d,t,u,\varepsilon}\left ((mn)^{\frac{d}{d+1}+\varepsilon}+m+n\right ).\]
\end{thm}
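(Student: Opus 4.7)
The plan is to reframe this as an incidence problem and attack it with polynomial partitioning. For each $p\in P_1$, the set $\gamma_p=\{q\in\R^d:(p,q)\in E\}$ is semi-algebraic of complexity bounded in terms of $d,t$, and the edges of $G$ are in bijection with incidences between the $m$ sets $\{\gamma_p\}_{p\in P_1}$ and the $n$ points of $P_2$; the $K_{u,u}$-freeness becomes the statement that no $u$ of the $\gamma_p$ share $u$ common points from $P_2$. I will induct on $m+n$, writing $Z(m,n)$ for the maximum number of edges in such an instance and assuming the stated bound for all smaller pairs.

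For the inductive step, I would apply polynomial partitioning to $P_2$: for a parameter $r$ to be chosen later, pick a polynomial $f$ of degree at most $r$ whose zero set splits $\R^d\setminus Z(f)$ into $O(r^d)$ open cells $\Omega_1,\dots,\Omega_N$, each containing at most $n/r^d$ points of $P_2$. Classify edges $(p,q)\in E$ by whether $q\in Z(f)$ or $q$ lies in some $\Omega_i$. In the latter case, writing $m_i$ for the number of $\gamma_p$ that meet $\Omega_i$ and $n_i$ for the number of points of $P_2$ inside $\Omega_i$, the induced subgraph is still $K_{u,u}$-free and semi-algebraic, so it has at most $Z(m_i,n_i)$ edges. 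A standard Milnor--Thom / Barone--Basu argument shows that a single $\gamma_p$ of bounded description complexity meets at most $O(r^{d-1})$ cells, giving $\sum_i m_i=O(mr^{d-1})$, while $\sum_i n_i\le n$. Substituting the inductive bound and applying H\"older to $\sum_i(m_in_i)^{d/(d+1)+\varepsilon}$, one recovers the claimed exponent after optimizing $r$ to balance the two constraints (roughly $r^{d+1}\approx n/m$ when $n\ge m$); the $\varepsilon$ slack absorbs the $O(\log(mn))$ levels of recursion.

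The main obstacle is handling the points of $P_2$ that land on $Z(f)$, for which no decrease in $n$ has been achieved. The standard remedy is to induct simultaneously on the dimension: one decomposes $Z(f)$ into smooth strata, projects each generically to $\R^{d-1}$, and observes that the resulting incidence problem is still $K_{u,u}$-free and semi-algebraic, now in one fewer dimension. This yields a contribution of the form $(mn)^{(d-1)/d+\varepsilon'}$ plus lower-order terms, which is subsumed by the target. A cleaner alternative when only $d$-dimensional bounds are sought is to perturb $f$ slightly so that $Z(f)$ avoids $P_2$ entirely, pushing the discrepancy into a lower-dimensional residual that is bounded directly by K\"ov\'ari--S\'os--Tur\'an and contributes only to the linear term $m+n$. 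The delicate part is choosing $r$ so that the overhead from the lower-dimensional recursion does not spoil the exponent, and verifying that the constants remain controlled in $d,t,u,\varepsilon$ through all the levels of nested induction.
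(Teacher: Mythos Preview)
This theorem is not proved in the present paper; it is quoted from Fox--Pach--Sheffer--Suk--Zahl~\cite{Fox} as background. The paper's own contribution is the sharper Theorem~\ref{zarenkiewicz semialg}, proved in Section~\ref{sec4} via \emph{classical cuttings} (Theorem~\ref{semialg cutting}) rather than polynomial partitioning. Your outline is essentially the approach of~\cite{Fox}, so in that sense you are reproducing the original proof, not the paper's alternative route. The paper's cutting argument replaces the polynomial $f$ by a $1/r$-cutting of the surfaces $\{f_i(\,\cdot\,,p_2)=0\}$; the role of ``each $\gamma_p$ meets $O(r^{d-1})$ cells'' is then played directly by the cutting property (each cell is \emph{crossed} by at most $n/r$ surfaces), and no induction on the ambient dimension is needed to handle a zero set.

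There is, however, a real gap in your sketch. You assert that ``a single $\gamma_p$ of bounded description complexity meets at most $O(r^{d-1})$ cells,'' but $\gamma_p$ is a semi-algebraic \emph{set}, not a hypersurface: it may be full-dimensional (think of a half-space), in which case it meets $\Theta(r^d)$ cells and $\sum_i m_i=O(mr^{d-1})$ fails. The fix---and this is precisely the ``crossing vs.\ containing'' dichotomy the paper highlights in Section~\ref{sec4}---is to separate, for each cell $\Omega_i$, those $\gamma_p$ that \emph{cross} $\Omega_i$ (intersect it without containing it) from those that \emph{contain} it. Crossing forces the boundary variety $\partial\gamma_p$ (dimension $\le d-1$) to meet $\Omega_i$, and the Milnor--Thom / Barone--Basu bound gives $O(r^{d-1})$ such cells per $p$. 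Containing gives a complete bipartite subgraph between $\{p:\gamma_p\supseteq\Omega_i\}$ and $P_2\cap\Omega_i$, so $K_{u,u}$-freeness forces one side to have fewer than $u$ elements; this contributes $O_u(m+n)$ edges per cell, which is absorbed by the recursion into the linear term. Without this split your H\"older step does not go through.
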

This was generalized by Do~\cite{repcomp} to the case of growing $u$, who showed the following.
\begin{thm}[\cite{repcomp}]\label{semialg Fox} Let $G=(P_1,P_2,E)$ be a semi-algebraic bipartite graph in $\mathbb{R}^d$ with $|P_1|=m$, $|P_2|=n$ of description complexity $t$. If $G$ is $K_{u,u}$-free, then for any $\varepsilon>0$
\[|E(G)|=O_{d,t,\varepsilon}\left (u (mn)^{\frac{d}{d+1}+\varepsilon}+um^{1+\varepsilon}+un^{1+\varepsilon}\right ).\]
\end{thm}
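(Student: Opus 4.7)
The plan is to prove the bound by induction on $m+n$, using polynomial partitioning at each level, and to extract the linear dependence on $u$ by invoking the $K_{u,u}$-free hypothesis freshly at every recursion step. Assume by symmetry $m\le n$, and suppose inductively that the bound holds (with a large constant $C=C(d,t,\eps)$) for all smaller instances; the base case, where $n$ is $O_{d,t,\eps}(1)$ or where one of $m,n$ is below a constant threshold, is trivial. To each $p\in P_1$ associate the semi-algebraic ``neighborhood'' $\sigma_p:=\{q\in\R^d:(p,q)\in E\}$, of description complexity $O_t(1)$; by Milnor--Thom, any real algebraic variety of degree $D$ is crossed by at most $O_t(D^{d-1})$ such sets.

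For the inductive step, apply polynomial partitioning (Guth) to $P_2$: pick a polynomial $f\in\R[x_1,\dots,x_d]$ of degree $D$, to be optimised, whose zero set $Z(f)$ partitions $\R^d\setminus Z(f)$ into $O(D^d)$ open cells, each meeting $O(n/D^d)$ points of $P_2$. Split $E$ into three groups according to the location of $q$. First, the \emph{boundary edges} with $q\in Z(f)\cap P_2$ are handled by a separate induction on the ambient dimension: after a vertical decomposition, $Z(f)$ consists of $O_t(D^{d-1})$ semi-algebraic patches of dimension $\le d-1$, each carrying a $K_{u,u}$-free semi-algebraic graph of bounded complexity. Second, the \emph{cell-covering edges}, where $q$ lies in an open cell $\Omega$ and $\sigma_p\supseteq\Omega$, contribute $O(un)$ in total: if $|P_2\cap\Omega|\ge u$ then at most $u-1$ points of $P_1$ can cover $\Omega$ without creating a $K_{u,u}$, and if $|P_2\cap\Omega|<u$ the contribution is trivial. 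Third, the \emph{cell-crossing edges}, where $q\in\Omega$ and $\sigma_p$ meets but does not contain $\Omega$, satisfy $\sum_\Omega m_\Omega=O_t(mD^{d-1})$, where $m_\Omega$ counts the crossing $p$'s of $\Omega$; inside each cell the induction hypothesis applies to a $K_{u,u}$-free semi-algebraic bipartite graph on $m_\Omega$ and $n_\Omega\le O(n/D^d)$ points.

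Summing the cell contributions by H\"older using the constraints $\sum m_\Omega=O(mD^{d-1})$ and $\sum n_\Omega\le n$, then choosing $D$ as a small fixed power of $n$ depending only on $\eps$ and $d$, causes each summand to contract by a factor polynomial in $D$, leaving enough slack to close the induction provided the leading constant is taken large enough. The dimension-reduction for the boundary edges bottoms out at $d=1$, where a $K_{u,u}$-free bipartite graph on two finite sets of reals has at most $O(u(m+n))$ edges, which is absorbed into the additive $um^{1+\eps}+un^{1+\eps}$ terms.

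The main obstacle is keeping the prefactor in $u$ \emph{linear} across the $O_\eps(1)$ levels of the recursion: a naive induction would compound $u$ into $u^{O(1)}$, which is fatal. The crucial observation is that the $K_{u,u}$-free condition must be consumed at \emph{every} recursion level, both in the cell-covering step (contributing an additive $O(un)$ per level) and through the induction hypothesis inside each cell (where the $u$-factor carried remains the same $u$, not a larger one). For this to be consistent, $D$ has to be chosen independently of $u$, so that the total number of recursive levels, and hence the exponent on $u$ in the final bound, stays $O_\eps(1)$. The other delicate point is that the vertical decomposition used for the boundary edges must preserve both the description complexity and the $K_{u,u}$-freeness of the induced graph; this can be arranged by standard tools from real algebraic geometry but requires care in the bookkeeping.
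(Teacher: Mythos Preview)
This statement is Theorem~B in the paper, a cited result of Do~\cite{repcomp}; the paper does not give its own proof of it. The paper merely remarks that this and the preceding theorem of Fox, Pach, Sheffer, Suk, and Zahl ``were proved using polynomial partitioning techniques,'' and then establishes its own improvement (Theorem~\ref{zarenkiewicz semialg}) by a different route, via classical cuttings with a constant parameter $r$. So there is no in-paper proof to compare against; your polynomial-partitioning outline is consistent with the method the paper attributes to the original source.

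That said, your sketch has a genuine soft spot at the cell-covering step. You assert that covering edges contribute $O(un)$ in total, arguing that if $|P_2\cap\Omega|\ge u$ then at most $u-1$ points of $P_1$ cover $\Omega$, while if $|P_2\cap\Omega|<u$ ``the contribution is trivial.'' The first clause is fine, but the second is not: when $|P_2\cap\Omega|<u$ there is no constraint on the number $|A_\Omega|$ of $p\in P_1$ with $\sigma_p\supseteq\Omega$, and a single semi-algebraic region $\sigma_p$ can contain every cell of the partition, so $\sum_\Omega |A_\Omega|$ may be as large as $m\cdot O(D^d)$. Hence the covering contribution from light cells is only $O(umD^d)$, not $O(un)$. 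This can still be absorbed into the $um^{1+\eps}+un^{1+\eps}$ error terms provided $D^d=O(n^{\eps})$, but it is not the clean $O(un)$ you claim, and it interacts with your simultaneous assertions that $D$ is a small power of $n$ and that the recursion has $O_\eps(1)$ levels: these two choices are consistent with each other but not with the $O(un)$ bound. Conversely, taking $D$ constant would give $O(un)$ per level but then the recursion depth is $\Theta(\log n)$ and $\sum_\Omega m_\Omega$ grows by a factor $D^{d-1}$ at each level, so the bookkeeping is again non-trivial. The paper's own cutting proof of Theorem~\ref{zarenkiewicz semialg} handles the analogous difficulty by bounding the complete-bipartite piece crudely by $um$ per cell and then running a careful two-regime case analysis on the relative sizes of $m,n,u$; some comparable device is needed here, and your sketch does not supply it.
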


The results above were proved using polynomial partitioning techniques. Using classical cuttings, we improve Do's bounds for $d=2,3,4$ and all $u$, as well as for $d> 4$ if $u$ is sufficiently large.\footnote{More precisely, if $u\geq n^{\frac{2d-8}{(d+1)(2d-5)}}$.}
\begin{theorem}\label{zarenkiewicz semialg} Let $G=(P_1,P_2,E)$ be a semi-algebraic $K_{u,u}$-free bipartite graph in $\mathbb{R}^d$ with $|P_1|=m$, $|P_2|=n$, $1\leq u \leq n$ and of description complexity $t$. Then for any $\varepsilon>0$ the following hold.
\begin{enumerate}
\item If $d=2,3,4$, then $|E(G)|= O_{t,\varepsilon} \left ( u^{\frac{2}{d+1}}(mn)^{\frac{d}{d+1}+\varepsilon}+um^{1+\varepsilon}+un^{1+\varepsilon}\right )$.

\item If $d\geq 5$, then $|E(G)|= O_{d,t,\varepsilon}\left ( u^{\frac{2}{2d-3}} (mn)^{\frac{2d-4}{2d-3}+\varepsilon}+um^{1+\varepsilon}+un^{1+\varepsilon}\right )$.
\end{enumerate}
\end{theorem}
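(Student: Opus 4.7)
The plan is to use a classical $(1/r)$-cutting of the arrangement of boundary hypersurfaces of the semi-algebraic neighbourhoods, combined with induction on $m+n$. For each $p\in P_1$ let $\gamma_p:=\{x\in\R^d:(p,x)\in E\}$, a semi-algebraic set of bounded description complexity (depending only on $t$ and $d$); edges correspond to incidences $q\in\gamma_p$, and each $\partial\gamma_p$ is an algebraic hypersurface of degree $O(t)$. Apply a $(1/r)$-cutting of the arrangement of the $m$ hypersurfaces $\partial\gamma_p$: for $d\le 4$ the standard cutting has $O(r^d)$ simply-shaped cells, while for $d\ge 5$ we use the vertical-decomposition-based cutting with $O(r^{2d-4+\varepsilon})$ cells. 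Writing $k=d$ (resp.\ $k=2d-4$), each cell $C$ is crossed by at most $m/r$ of the hypersurfaces. Distribute $P_2$ among the cells.

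Inside each cell $C$ split the edges into \emph{containment edges} (those $(p,q)$ with $\gamma_p\supseteq C$) and \emph{crossing edges} (those with $\partial\gamma_p$ meeting $C$). The containment edges form a complete bipartite subgraph between $S_C:=\{p\in P_1:\gamma_p\supseteq C\}$ and $T_C:=P_2\cap C$, so the $K_{u,u}$-free condition yields $\min(|S_C|,|T_C|)\le u-1$, giving at most $(u-1)(|S_C|+|T_C|)$ containment edges per cell. For the crossing edges inside $C$ one applies the inductive hypothesis to the sub-problem on the at most $m/r$ crossing hypersurfaces and the $|T_C|$ points; summing by Jensen's inequality,
\[
\sum_C |T_C|^{k/(k+1)+\varepsilon}\le O\!\left(r^{k/(k+1)-k\varepsilon}\right) n^{k/(k+1)+\varepsilon},
\]
so the main geometric term contracts by a factor $r^{-(k+1)\varepsilon}$.

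Choosing $r=r(\varepsilon)$ large enough that $r^{(k+1)\varepsilon}\ge 2$ absorbs this contraction into $A/2$ of the inductive coefficient; the remaining error terms sum to $O(u\, m^{1+\varepsilon}+u\, n^{1+\varepsilon})$. The induction therefore closes with the claimed bound $u^{2/(k+1)}(mn)^{k/(k+1)+\varepsilon}+u\,m^{1+\varepsilon}+u\,n^{1+\varepsilon}$, giving case (1) for $k=d$ and case (2) for $k=2d-4$.

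The main obstacle is controlling the sum $\sum_C |S_C|$, which is only trivially bounded by $O(mr^k)$. To obtain the dependence $u^{2/(d+1)}$ (resp.\ $u^{2/(2d-3)}$) rather than the naive $u$ of Do's theorem, one must split cells according to whether $|T_C|\ge u$ and apply $K_{u,u}$-freeness on both sides of the bipartition---essentially performing an implicit dual cutting. Showing that this refined analysis contributes one factor $u^{1/(k+1)}$ from each side, while keeping the inductive balance clean, is the technical crux of the proof.
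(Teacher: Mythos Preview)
Your overall framework—a $(1/r)$-cutting plus induction on $m+n$—is the paper's approach, and the Jensen contraction of the main inductive term is correct. But the proposal stops exactly where you say it does: the last paragraph names the obstacle (the containment contribution, equivalently $\sum_C|S_C|$) without resolving it, and ``split cells according to whether $|T_C|\ge u$\ldots essentially performing an implicit dual cutting'' is a description of a goal, not a proof.

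The paper's resolution avoids summing $|S_C|$ altogether. Write $h_u(m,n)$ for the extremal function and use its symmetry to assume $m\ge n$ throughout. Then the complete-bipartite part in each cell is bounded crudely by $u\cdot m$ (one side is below $u$, the other at most $m$), giving the single recursion
\[
h_u(m,n)\;\le\;O_{d,t,\varepsilon}\bigl(r^{k+\varepsilon}\bigr)\Bigl(h_u\!\bigl(\tfrac{m}{r^{k}},\tfrac{n}{r}\bigr)+u\,m\Bigr),
\]
with $k=d$ for $d\le4$ and $k=2d-4$ for $d\ge5$. This recursion is then solved by a \emph{global} two-regime induction, not a cell-by-cell case split: if $m/u\ge(n/u)^{k}$ one shows directly that $h_u(m,n)\le C'\,u\,m^{1+\varepsilon}$; otherwise $u\,m<u^{2/(k+1)}(mn)^{k/(k+1)}$, so the additive $u\,m$ is dominated by the main term and the induction closes with the bound $C\,u^{2/(k+1)}(mn)^{k/(k+1)+\varepsilon}$. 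The symmetry of $h_u$ is invoked whenever the larger coordinate switches sides after a step—this \emph{is} your ``dual cutting,'' realised as a one-line swap of arguments rather than a second geometric decomposition.

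There is a second, related gap in your induction as written: the inherited error term $u(m/r)^{1+\varepsilon}$ summed over $O(r^{k})$ cells picks up a factor $r^{k-1}$, so the claim that ``the remaining error terms sum to $O(u\,m^{1+\varepsilon}+u\,n^{1+\varepsilon})$'' is not justified. The regime split is precisely what prevents this blow-up: in each regime the induction hypothesis is a single term, not the full three-term sum, so the problematic cross-term never enters.
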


A heuristic argument in \cite{repcomp} shows that in the $d=2$ case the bound in Theorem~\ref{zarenkiewicz semialg} is best possible. We also note that our proof is arguably simpler than that of \cite{Fox} and \cite{repcomp}.

Note that, although all results in this section are stated for semi-algebraic bipartite graphs, they are also valid for any semi-algebraic graphs since any graph contains a bipartite subgraph with at least half the edges. We also note that recently Janzer and Pohoata \cite{Janzer} proved a Zarankiewicz type result for graphs of bounded VC-dimension.\\

\vskip+0.1cm

A {\it unit distance graph} in $\R^d$ is a semi-algebraic graph with vertices being points in $\R^d$ and pairs of points connected by an edge if and only if they are at distance $1$. A {\it diameter graph} is a unit distance graph with vertex set that has diameter $1$ (i.e., each edge corresponds to a diameter of the set of vertices). In the first part of the introduction, we were essentially dealing with the maximum number of cliques of fixed size in $n$-vertex unit distance or diameter graphs in $\R^d$. For unit distance graphs, we obtain a result that strengthens the bound of Do \cite{repcomp} (and Fox, Pach, Sheffer, Suk and Zahl \cite{Fox}) for all $u$ and $d$.

\begin{theorem}\label{zarankiewicz unit distances}
Let $G=(P_1,P_2,E)$ be a bipartite unit distance graph in $\mathbb{R}^d$ with $|P_1|=m$, $|P_2|=n$. If $G$ is $K_{u,u}$-free, then for any $\varepsilon>0$  \[ |E(G)|=O_{d,\varepsilon}\left ( u^{\frac{2}{d+1}}(mn)^{\frac{d}{d+1}+\varepsilon}+um^{1+\varepsilon}+un^{1+\varepsilon} \right ).\]
\end{theorem}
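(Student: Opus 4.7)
The plan is to reduce $|E(G)|$ to a point--sphere incidence count and then apply a classical cutting. Associating to each $p\in P_1$ the unit sphere $S_p=\{x\in\R^d:\|x-p\|=1\}$, an edge $(p,q)\in E$ is precisely an incidence $q\in S_p$, so $|E(G)|$ equals the number of incidences between $\mathcal S:=\{S_p:p\in P_1\}$ and $P_2$; the $K_{u,u}$-freeness of $G$ becomes the geometric condition that no $u$ spheres of $\mathcal S$ share $u$ common points of $P_2$. Since unit spheres are bounded-degree algebraic surfaces, the Clarkson--Shor construction gives a $(1/r)$-cutting of $\mathcal S$ into $O(r^d)$ cells of bounded complexity, each crossed by at most $m/r$ spheres.

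Assign each point of $P_2$ to the cell containing it (a routine perturbation places boundary points in a unique cell). Inside cell $\sigma$, the induced subgraph on $m_\sigma\le m/r$ spheres and $n_\sigma$ points is still $K_{u,u}$-free, and Kővári--Sós--Turán yields $|E_\sigma|\le C\bigl(u^{1/u}n_\sigma m_\sigma^{1-1/u}+u\,m_\sigma\bigr)$. I would then induct on $m+n$ with inductive hypothesis
\[
|E(G')|\le C_\varepsilon\bigl(u^{2/(d+1)}(m'n')^{d/(d+1)+\varepsilon}+u\bigl((m')^{1+\varepsilon}+(n')^{1+\varepsilon}\bigr)\bigr),
\]
applying the bound recursively inside each cell. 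Taking $r=r_0(\varepsilon,d)$ a sufficiently large constant, Hölder's inequality
\[
\sum_\sigma(m_\sigma n_\sigma)^{d/(d+1)+\varepsilon}\le(mn)^{d/(d+1)+\varepsilon}\,r_0^{-(d+1)\varepsilon}
\]
contracts the leading term by a factor $\le 1/2$ per recursion step, while the linear overhead, controlled by $\sum_\sigma m_\sigma^{1+\varepsilon}\le m^{1+\varepsilon}\,r_0^{d-1-\varepsilon}$ and $\sum_\sigma n_\sigma^{1+\varepsilon}\le n^{1+\varepsilon}$, is absorbed into the $O_{d,\varepsilon}$ implicit constant. The base case $\min(m,n)=O(1)$ is trivial: $|E|\le mn\le u\max(m,n)$.

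\textbf{Main obstacle.} The principal technical point is the correct balancing of $r$ so that both the leading $(mn)^{d/(d+1)+\varepsilon}$ term and the linear $u(m^{1+\varepsilon}+n^{1+\varepsilon})$ terms stay within the target throughout the recursion: large $r_0$ is needed to obtain the Hölder gain, but this inflates the cell-wise linear contribution by the constant factor $r_0^{d-1-\varepsilon}$. The extra $\varepsilon$-slack in the exponents of the linear part of the target is exactly what accommodates this blow-up. The sharp $u^{2/(d+1)}$ dependence on $u$---replacing the $u^1$ of Do and Fox--Pach--Sheffer--Suk--Zahl---arises from combining the $u^{1/u}$ factor of Kővári--Sós--Turán with the optimization of $r$; a separate routine argument handles the boundary points of the cutting.
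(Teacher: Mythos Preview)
Your proposal has two genuine gaps.

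\textbf{The induction does not close.} Grant the decomposition $|E|\le\sum_\sigma|E_\sigma|$ with $m_\sigma\le m/r_0$, $\sum_\sigma n_\sigma=n$, and $O(r_0^{d})$ cells. Applying the inductive hypothesis inside each cell produces a term $C_\varepsilon\, u\sum_\sigma m_\sigma^{1+\varepsilon}\le C_\varepsilon\, u\,m^{1+\varepsilon}r_0^{\,d-1-\varepsilon}$. To close the induction this would have to be at most $C_\varepsilon\, u\,m^{1+\varepsilon}$, which fails since $r_0^{\,d-1-\varepsilon}>1$ for $d\ge 2$; the same constant $C_\varepsilon$ sits on both sides, so nothing is ``absorbed''. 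The $\varepsilon$-slack in the exponent of $m$ is irrelevant---the obstruction is the multiplicative constant, not the power. Nor can the contraction of the leading term compensate: precisely in the regime $m\gtrsim n^{d}/u^{d-1}$ it is the linear term $u\,m^{1+\varepsilon}$ that dominates the target. Relatedly, your explanation for the factor $u^{2/(d+1)}$ cannot be right: $u^{1/u}\to 1$, and your choice $r=r_0(\varepsilon,d)$ never depends on $u$, so nothing in the argument as written singles out any particular power of $u$.

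\textbf{Perturbation is not routine.} You cannot perturb the points of $P_2$ without destroying the unit-distance relation. If $q\in P_2$ lies in a lower-dimensional cell $\tau$, a sphere $S_p\ni q$ may \emph{contain} $\tau$ rather than \emph{cross} it, and the cutting says nothing about the number of such spheres. These incidences are not a boundary nuisance; they can constitute a constant fraction of $|E|$.

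The paper handles both issues at once. It uses the full guarantee of Theorem~\ref{1/r-cutting}: each of the $O_{d,\varepsilon}(r^{d+\varepsilon})$ cells is crossed by at most $n/r$ spheres \emph{and} contains at most $m/r^{d}$ points (arranging $m\ge n$ by symmetry). For spheres that contain a cell, the induced bipartite graph on that cell is complete, so $K_{u,u}$-freeness forces one side below $u$ and bounds that contribution by $um$ per cell. This yields the recursion
\[
h_u(m,n)\;\le\; O_{d,\varepsilon}\bigl(r^{d+\varepsilon}\bigr)\Bigl(h_u\bigl(m/r^{d},\,n/r\bigr)+um\Bigr),
\]
and after splitting into the regimes $m/u\ge(n/u)^{d}$ and $m/u<(n/u)^{d}$ the induction closes and gives the stated bound. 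The exponent $u^{2/(d+1)}$ emerges from this recursion and case analysis; K\H{o}v\'ari--S\'os--Tur\'an is never invoked.
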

We  note that the case of unit distance graphs was pointed as one of the main applications of the general result in \cite{Fox}.

What is the largest number of $k$-cliques in a graph with no complete bipartite subgraph? For $k=2$ it is the Zarankiewitz problem. For larger $k$ this question was studied by Alon and Shikhelman \cite{Ash}, partly motivated by the discussions of unit distance graphs with the second author of the present paper. The bound, as in the Zarankiewitz case, of course heavily depends on the size of the forbidden complete bipartite graph. Below, we get an analogous extension of our results for unit distance graphs. For simplicity, we only include a result for the most difficult, $k=d+1$ case.

\begin{theorem}\label{simplex no K_{t,t}}
Let $\varepsilon>0$ and assume that the unit distance graph on a set of $n$ points $P\subseteq \mathbb{R}^d$ is $K_{t,t}$-free. Then $P$ determines $O_{d,t,\varepsilon}(n^{{(d+2)}/3+\varepsilon})$ many unit simplices with $d+1$ vertices for.
\end{theorem}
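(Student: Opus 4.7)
The plan is to proceed by induction on $d$.

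\textbf{Base case ($d=2$).} A unit $3$-simplex in $\mathbb{R}^2$ is an equilateral triangle of side $1$; each unit edge lies in at most two such triangles (one on each side of the line through its endpoints), so $N_3 \le \tfrac{2}{3}|E|$. Theorem~\ref{zarankiewicz unit distances} with $d=2$ gives $|E| = O_{t,\varepsilon}(n^{4/3+\varepsilon})$, matching the target $(d+2)/3 = 4/3$.

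\textbf{Inductive step.} The vertex-peeling identity
\[
(d+1)\,N_{d+1}(P) \;=\; \sum_{p \in P} M_p,
\]
where $M_p$ counts unit $d$-simplices in the unit-neighborhood $N(p)\subseteq S_p$, reduces the problem to bounding $M_p$ for each $p \in P$. The set $N(p)$ lies on the $(d-1)$-dimensional unit sphere $S_p$ and its induced unit distance graph is $K_{t,t}$-free; a unit $d$-simplex in $N(p)$ is a full-dimensional regular unit simplex inscribed in $S_p$. A spherical analogue of the inductive hypothesis, proved jointly with the Euclidean version by checking that the cutting arguments underlying Theorem~\ref{zarankiewicz unit distances} pass through on a codimension-$1$ subvariety, gives
\[
M_p \;=\; O_{d,t,\varepsilon}\!\left( d(p)^{(d+1)/3+\varepsilon}\right),
\]
where $d(p) := |N(p)|$. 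Summing, it remains to show $\sum_p d(p)^{(d+1)/3} = O(n^{(d+2)/3+\varepsilon})$. Theorem~\ref{zarankiewicz unit distances} yields $\sum_p d(p) = 2|E| = O(n^{2d/(d+1)+\varepsilon})$, while the $K_{t,t}$-free condition gives
\[
\sum_{p \in P} \binom{d(p)}{t} \;=\; \sum_{T \in \binom{P}{t}} |N(T)| \;\le\; (t-1) \binom{n}{t},
\]
and hence $\sum_p d(p)^t \le C_t\, n^t$. A Lyapunov interpolation at exponent $(d+1)/3$ between these $\ell^1$ and $\ell^t$ bounds, with weight $\lambda = 1/(d-1)$ on the $\ell^1$ side, returns exactly the target exponent $(d+2)/3$, provided $t \le (d+1)/2$.

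\textbf{Main obstacles.} The principal difficulty is the transfer from $\mathbb{R}^{d-1}$ to the sphere $S_p$: one needs to establish that $K_{t,t}$-free unit distance configurations on a $(d-1)$-sphere embedded in $\mathbb{R}^d$ satisfy the same unit $d$-simplex bound as in $\mathbb{R}^{d-1}$. This can be done either by verifying that the cutting-based arguments behind Theorem~\ref{zarankiewicz unit distances} extend to such subvarieties, or by transferring the count via an algebraic reparametrization. A secondary issue arises for $t > (d+1)/2$, where the Lyapunov interpolation alone becomes lossy; in this regime, a dyadic decomposition of the degree sequence combined with a level-wise application of Theorem~\ref{zarankiewicz unit distances}, whose $t$-dependence is only the polynomial factor $t^{2/(d+1)}$, recovers the target exponent.
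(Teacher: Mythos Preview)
Your approach and the paper's are structurally the same: both strengthen the statement to cover point sets in $\mathbb{R}^d$ \emph{and} on a $d$-sphere, prove it by induction on $d$, and peel off one vertex to reduce to the neighborhood on a $(d-1)$-sphere. Your bound $M_p = O(d(p)^{(d+1)/3+\varepsilon})$ is exactly the paper's use of $U_t(d-1,n^{\alpha})$ on the dyadic level $R_\alpha$.

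The only genuine difference is how the sum $\sum_p d(p)^{(d+1)/3}$ is controlled. The paper dyadically groups points by degree and bounds the number of $n^{\alpha}$-rich points via the asymmetric Zarankiewicz estimate, obtaining $|R_\alpha|\le O(n^{d-(d+1)\alpha+\varepsilon}+n^{1-\alpha+\varepsilon})$; this works uniformly for all $t$. Your Lyapunov interpolation between $\sum d(p)=O(n^{2d/(d+1)+\varepsilon})$ and the K\H{o}v\'ari--S\'os--Tur\'an moment $\sum d(p)^t=O_t(n^t)$ is a pleasant shortcut, but (as you correctly note) it only reaches the target exponent when $t\le (d+1)/2$; incidentally, the H\"older weight on the $\ell^1$ side is $(t-s)/(t-1)$ with $s=(d+1)/3$, not the fixed $1/(d-1)$ you quote, though your range conclusion is right. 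For $t>(d+1)/2$ you revert to the dyadic method, which is precisely the paper's argument. So your proof is correct, but since the dyadic argument is needed anyway, the interpolation branch does not actually buy a simplification --- the paper's single uniform argument is cleaner. The spherical transfer you flag as the main obstacle is handled in the paper just as you propose: the rich-point bound from the Zarankiewicz theorem is observed to go through on a $d$-sphere with the same exponents, and the induction is run for both settings simultaneously.
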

Let us make a remark about this result. It gives a better bound than what we are able to extract from the Agarwal--Sharir framework for this case (which would be roughly $n^{d/2}$). It is also better than the general bound that Do \cite{repcomp} gives for the analogous, but much more general, problem for semialgebraic hypergraphs. The approach for the proof is based on the ideas of our paper \cite{Paths} and, as we think, have potential to help in the full resolution of the Erd\H os--Purdy--Agarwal--Sharir problem.
\vskip+0.1cm

\textbf{Classical cuttings and polynomial partitioning}\vskip+0.1cm

Cuttings is a powerful technique in discrete and computational geometry and is the basis of geometric divide and conquer algorithms, with many applications to such algorithmic problems as range searching \cite{Ager, AMat, AMS, Chaz1, Mat2}, computing convex hulls and Voronoi diagrams \cite{Bron}  etc. (see the book \cite{HarPel}).

The idea of using sparsely intersecting simplices for divide and conquer goes back to Clarkskon \cite{Clark1} and Haussler and Welzl \cite{epsilon}, while the term "$r$-cutting" was coined by Matou\v sek \cite{Mat1}. In the years that followed, the cutting technique developed rapidly, through the work of
Agarwal, Aronov, Chazelle, Clarkson, Edelsbrunner, Friedman, Matousek, Sharir and many others. One of the key ingredients is the use of $\varepsilon$-nets (see the recent survey \cite{MV16}).

In the seminal paper \cite{clarkson}, Clarkson et al. introduced cuttings to incidence geometry. Since then, many Szemer\'edi--Trotter type theorems, as well as results on the number distinct or same-type configurations in the arrangements of points and surfaces were proved using these techniques (see \cite{AS2, matousek} and the references therein).

Recently, we saw a great  development of the polynomial partitioning method (see, e.g., \cite{Kaplan, PZ, Raz, zahl, unit3dzahl}) that followed  the breakthrough result by Guth and Katz \cite{GK}. One consequence of that development is that the attention of the researchers in incidence geometry swayed in polynomial techniques. For classical cuttings, the partition is obtained using a sample of the objects in question, and the arguments have probabilistic nature. For polynomial partitioning, the space is partitioned using the zero set of a high-degree polynomial, and the main tools for the study are of algebraic nature. In this paper, we make use of classical cutting techniques, showing that the approach can beat more algebraic methods.\\

\textbf{Structure}\vskip+0.1cm
The structure of the remaining part of the paper is as follows.

\begin{itemize}
\item In Section~\ref{sec11} we present several more results of lesser importance, including the resolution of some particular instances of the Erd\H os-Purdy conjecture and a Zarankiewicz result for diameter graphs. 
    \item In Section~\ref{sec3}, we overview the cutting results that we use. At the end of the section, we give a high-level overview of the proofs.
    \item In Section~\ref{sec4}, we prove the Zarankiewicz-type results using cutting results from Section~\ref{sec3}. 
    \item In Section~\ref{sec5}, we present the reduction of the problem of determining $\mathrm{C}(n,k,d)$ to a certain family of LP problems. This section follows the paper \cite{AS} and is presented for completeness. For simplicity, we describe it for unit simplices. 
    \item In Section~\ref{sec6}, we analyse the aforementioned LP's and prove Theorem~\ref{unitsimplices}.
    \item In Section~\ref{sec8}, we sketch the proof of Theorem~\ref{diam simplices}.
    \item In Section~\ref{sec10}, we prove Theorem~\ref{nontrivi}.
    \item In Section \ref{no K_tt} we prove Theorem \ref{simplex no K_{t,t}}
    \item In Section~\ref{sec7}, we prove the results about small simplices.
    \item In Section \ref{sec LP} we prove further results about the LP's that could be helpful for resolving Conjecture \ref{conj unit simplices} completely.
    \item in Appendix, we present an exposition of the proofs of the cutting results we use. We thought it could be useful because different necessary pieces are scattered around the literature.
    \end{itemize}

\subsection{Further results}\label{sec11}

In this subsection, we collected several results of lesser importance that we can prove using our methods.

We confirm (up to an $\varepsilon$ in the exponent) the smallest non-trivial instance of Conjecture~\ref{conj unit simplices} for general $d$: the case $k=\lfloor d/2\rfloor +1.$

\begin{theorem}\label{smallk}
Let $d>2$ be even, $k=\frac{d}{2}+1$ and $\varepsilon>0$. Then $\mathrm{C}(d,k,n)=O_{d,\varepsilon}\left (n^{\frac{d}{2}+\varepsilon}\right )$.
Let $d>3$ be odd, $k=\frac{d+1}{2}$ and $\varepsilon>0$. Then $\mathrm{C}(d,k,n)=O_{d,\varepsilon}\left (n^{d/2-\frac{1}{6}+\varepsilon}\right )$.
\end{theorem}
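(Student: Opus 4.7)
The plan is to run the Agarwal–Sharir double-counting scheme and close the recursion using the Zarankiewicz-type bound of Theorem~\ref{zarankiewicz unit distances}. Write $t := \lfloor d/2\rfloor$, so $k = t+1$. Every unit $(t+1)$-simplex arises uniquely as a unit $t$-simplex $\Delta \subset P$ together with a vertex $v \in P \setminus \Delta$ at unit distance from every vertex of $\Delta$. Denoting by $S_\Delta$ the $(d-t)$-dimensional sphere of points equidistant at $1$ from all of $\Delta$, double counting gives
\[
(t+1)\,\mathrm{C}(d,t+1,n) \;=\; \sum_{\Delta} \big|(S_\Delta \cap P) \setminus \Delta\big|.
\]
Since there are $O(n^t)$ unit $t$-simplices (trivial bound, tight by Lenz), our task reduces to bounding the right-hand side by $O(n^{d/2+\varepsilon})$ in the even case and $O(n^{d/2-1/6+\varepsilon})$ in the odd case.

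I would dualise: the sum equals $\sum_{v \in P}(\#\{\text{unit } t\text{-simplices contained in } N_v\})$, where $N_v := P \cap S^{d-1}(v)$ is the set of points at unit distance from $v$, so that $N_v$ lies on a $(d-1)$-sphere around $v$. By Lenz, $\sum_v |N_v| = 2\,\mathrm{C}(d,2,n) = O(n^2)$ for $d \geq 4$. For even $d$, the target bound will follow from
\[
\#\{\text{unit } t\text{-simplices in } N_v\} \;=\; O\bigl(|N_v|^{t-1+\varepsilon}\bigr),
\]
since then $\sum_v |N_v|^{t-1+\varepsilon} \le n \cdot n^{t-1+\varepsilon} = n^{t+\varepsilon} = n^{d/2+\varepsilon}$, the extremal distribution being $|N_v| \approx n$ for all $v$ (as in Lenz). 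To establish this clique-count on the sphere, I would invoke Theorem~\ref{zarankiewicz unit distances} iteratively, in the style of the proof of Theorem~\ref{simplex no K_{t,t}}: count $t$-cliques via $t-2$ rounds of bipartite Zarankiewicz, extending one vertex at a time from a $2$-clique base. At each round the graph is the unit-distance graph restricted to points on a $(d-1)$-sphere in $\R^d$, which is semialgebraic of bounded description complexity and $K_{u,u}$-free for a parameter $u = u(d)$ bounded by a generic dimension count — a geometric coincidence of $u$ extending points forces the intermediate configuration onto an algebraic variety of low degree, handled by a dedicated case.

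For odd $d > 3$, the same scheme runs, but the clique-count on $N_v$ acquires an extra factor of $n^{1/3}$ because the Lenz extremiser in $\R^d$ contains a $2$-sphere component. A vertex $v$ on that $2$-sphere contributes unit distances among points on the $2$-sphere, where Spencer–Szemerédi–Trotter caps unit distances at $O(m^{4/3})$ rather than $O(m^2)$. Passing this improvement through one round of the Zarankiewicz iteration yields $O(|N_v|^{t-1+\varepsilon}\,n^{1/3})$ on the relevant stratum, which summed produces $n^{t+1/3+\varepsilon} = n^{d/2-1/6+\varepsilon}$.

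The main obstacle is maintaining the $K_{u,u}$-free parameter $u = O_d(1)$ throughout the iterated clique-counting: a large $K_{u,u}$ in an intermediate round means $u$ sub-simplices share $u$ common extending vertices, which generically forces them to lie on a lower-dimensional algebraic variety — either this degenerate case contributes a negligible number of simplices, or it can be absorbed by induction on $d$. A careful bookkeeping of these degenerate strata, combined with the Szemerédi–Trotter-type bound on the $2$-sphere for the odd case, yields the claimed exponents.
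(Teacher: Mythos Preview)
Your plan has a genuine gap at its core. The per-vertex inequality
\[
\#\{\text{unit }t\text{-cliques in }N_v\}=O\bigl(|N_v|^{t-1+\varepsilon}\bigr)
\]
is what carries the whole argument, and your proposed proof of it via iterated Zarankiewicz (Theorem~\ref{zarankiewicz unit distances}) does not go through: the unit-distance graph on $N_v\subset S^{d-1}$ is \emph{not} $K_{u,u}$-free for any bounded $u$. In the Lenz configuration itself, $N_v$ contains all points on the $t-1$ circles orthogonal to the one carrying $v$, and any two of those circles already span a complete bipartite unit-distance graph of size $\Theta(n)\times\Theta(n)$. So the ``generic'' branch of your dichotomy is vacuous in the extremal example, and the entire weight of the proof falls on the ``degenerate stratum'' you propose to absorb by induction on $d$ --- but that is precisely the step you have not carried out, and it is where all the content lies. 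Even granting $K_{u,u}$-freeness, Theorem~\ref{zarankiewicz unit distances} would give only $|N_v|^{2d/(d+1)+\varepsilon}$ edges, far from the $|N_v|^{1+\varepsilon}$ you need already at the first extension step when $d=4$, $t=2$.

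The paper does not proceed via vertex-neighbourhoods at all; it stays inside the Agarwal--Sharir LP/recursion framework of Section~\ref{sec5}. The key observation is that if every vertex of the compatibility graph $G$ has degree strictly less than $\lfloor d/2\rfloor-1$, then the uniform assignment $x_i=\tfrac{1}{d+2}$ is feasible for the LP~\eqref{total}--\eqref{cond} and gives $\zeta(G,\pmb\lambda)\le d/2$ (resp.\ $<d/2-1/6$ for odd $d$), so the recursion~\eqref{recursion} closes directly. If instead some vertex, say $1$, has degree $\ge\lfloor d/2\rfloor-1$, then $P_1$ is at unit distance from all of $P_2,\dots,P_{\lfloor d/2\rfloor}$, and Lemma~\ref{spheres} forces $P_1$ onto a $\lambda_1$-sphere while $P_2,\dots,P_{\lfloor d/2\rfloor}$ land on a complementary $(d-\lambda_1-1)$-sphere. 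A short case analysis on $\lambda_1\in\{1,2,\ge 3\}$ then reduces to the theorem in dimensions $d-2$, $d-3$, or $d-4$, using the $d=4,5$ base cases from~\cite{AS} and, for $\lambda_1=2$, the $O(n^{4/3})$ bound for unit distances on a $2$-sphere~\cite{sphere}. This dimension induction is exactly what you gesture at with ``absorbed by induction on $d$,'' but setting it up requires both the LP solution for the low-degree case and the orthogonal-sphere structure of Lemma~\ref{spheres} for the high-degree case --- neither of which is present in your outline.
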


In the case when $d=7$ and $k=4,$ we managed to remove $\varepsilon$ error term from the exponent. Note that this resolves (up to the order of magnitude) the smallest case left open by Agarwal and Sharir \cite{AS}.

\begin{theorem}\label{k=4,d=7}
We have  $C(7,4,n)=\Theta(n^{10/3})$.
\end{theorem}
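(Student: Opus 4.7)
The lower bound $C(7,4,n) = \Omega(n^{10/3})$ follows from the generalized Lenz construction in $\R^7$. Take three pairwise orthogonal subspaces $V_1,V_2,V_3 \subset \R^7$ of dimensions $3, 2, 2$, and inside them place a $2$-sphere $\S^2$ and two circles, all of radius $1/\sqrt{2}$. Distribute $\lfloor n/3 \rfloor$ points on each, choosing those on $\S^2$ so as to determine $\Omega(n^{4/3})$ unit distances (see \cite{sphere}). Any two points drawn from different $V_i$ are at distance exactly $1$ by orthogonality, so every unit-distance pair on $\S^2$ combined with one point from each of the two circles forms a unit $4$-simplex, giving $\Omega(n^{10/3})$ simplices in total.

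For the upper bound, Theorem~\ref{smallk} already yields $C(7,4,n) = O(n^{10/3+\varepsilon})$, so the plan is to revisit its proof in the particular case $(d,k)=(7,4)$ and eliminate the sub-polynomial slack. The factor $n^\varepsilon$ enters through two mechanisms: the use of $(1/r)$-cuttings of size $O(r^{d+\varepsilon})$ at the outer induction step, and iterated cuttings used to count pairs of points at a fixed distance lying in a specified lower-dimensional locus. I would replace both with $\varepsilon$-free inputs. First, I would use a classical cutting of size $O(r^d \log^{O(1)} r)$ at the outer step, whose polylogarithmic overhead is absorbed by choosing $r$ as an appropriate power of $n$. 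Second, I would invoke the sharp bound $O(n^{4/3})$ on the number of unit distances determined by $n$ points on a $2$-sphere in $\R^3$; this bound is $\varepsilon$-free because unit-distance circles on a sphere are ordinary small circles meeting pairwise in at most two points, so the count reduces to a direct application of a Szemer\'edi--Trotter-type bound for points and pseudo-circles.

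Concretely, the LP from Section~\ref{sec5} for $(d,k)=(7,4)$ is dominated by the vertex corresponding to the $(2,1,1)$-partition of the Lenz construction: two simplex vertices lying on a $3$-dimensional component of the point set, plus one vertex from each of two $2$-dimensional components. Bounding this contribution using the spherical unit-distance estimate together with two trivial factors of $n$ yields exactly $O(n^{10/3})$. The remaining LP vertices produce strictly smaller exponents, and so their contribution is absorbed even after paying a polylogarithmic overhead from the cutting arguments.

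The main obstacle is to show rigorously that the LP optimum for $(7,4)$ is attained uniquely at the $(2,1,1)$-vertex with a strictly positive gap from all other vertices, so that no sub-polynomial slack is required elsewhere in the analysis. A secondary challenge is to ensure that the subproblem at the bottom of the recursion is genuinely two-dimensional --- that the relevant points effectively lie on a $2$-sphere (or on a configuration of even smaller dimension), rather than on a higher-dimensional locus where the unit-distance bound could once again acquire an $\varepsilon$-loss.
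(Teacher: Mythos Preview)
Your outline identifies the right ingredients and is essentially the paper's approach, but what you list as ``obstacles'' is exactly where the content of the proof lies, and your framing in terms of ``LP vertices'' and a ``$(2,1,1)$-partition'' is too vague to stand as an argument.

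The paper organises the upper bound as a dichotomy on the auxiliary graph $G$ on $[4]$ from Section~\ref{sec5}. If every vertex of $G$ has degree at most $1$, one checks directly that $\zeta(G,\pmb\lambda)\le 28/9<10/3$ (take $\lambda_i=7$, $x_i=1/9$). This strict gap is what kills the $\varepsilon$: in the recursion \eqref{recursion} one keeps $r$ a large \emph{constant}, so the polylog overhead from the cutting is a constant absorbed by the slack $10/3-28/9$. Your suggestion to take $r$ a power of $n$ is neither necessary nor the mechanism used.

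If some vertex of $G$ has degree $\ge 2$ --- say $A$ is at unit distance from both $B$ and $C$ --- the paper bypasses the LP entirely and does a direct case split on the dimension of the smallest sphere $S$ containing $A$. When $\dim S=1$, the complement is a $5$-flat and one invokes the known $C(5,3,n)=O(n^{7/3})$; when $\dim S=2$, both $B,C$ lie on a fixed $3$-sphere and after a further split one lands on point sets on a $2$-sphere, where the sharp $O(n^{4/3})$ bound of \cite{sphere} gives exactly $n^{10/3}$; when $\dim S\ge 3$, $B\cup C$ lies on a sphere of dimension $\le 2$ and again \cite{sphere} applies. No cuttings enter this branch at all, so no $\varepsilon$ appears.

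Your ``secondary challenge'' --- ensuring the bottom subproblem is genuinely two-dimensional --- is thus not a consequence of the LP analysis but of this geometric case analysis, which you have not carried out. Once you split on the degree of $G$ and then on $\dim S$, both of your obstacles dissolve.
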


We also confirm the first open case of Conjecture \ref{conj diam} on simplices in diameter graphs.

\begin{theorem}\label{diam-triangle}
We have $\mathrm{DIAM}(5,3,n)=\Theta(n^{2})$.
\end{theorem}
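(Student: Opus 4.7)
The plan is to exhibit an explicit diameter-one configuration in $\R^5$ with $\Omega(n^2)$ unit equilateral triangles. Decompose $\R^5 = U_1 \oplus U_2 \oplus U_3$ with $\dim U_1 = \dim U_2 = 2$ and $\dim U_3 = 1$, and set $r = 1/\sqrt{2}$. For $i = 1,2$ let $C_i \subset U_i$ be the circle of radius $r$ centered at the origin, and let $A_i \subset C_i$ consist of $m := \lfloor (n-1)/2 \rfloor$ points placed on an arc of $C_i$ of angular length $\pi/2$; finally set $p = r \cdot e$ for a unit vector $e$ of $U_3$. A direct computation gives $\|a_1 - a_2\| = 1$ for every $a_1 \in A_1,\, a_2 \in A_2$, and $\|a_i - p\| = 1$ for every $a_i \in A_i$, while within-arc distances lie in $[0,1]$. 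Hence the set $A_1 \cup A_2 \cup \{p\}$ has diameter exactly $1$, and each of the $m^2 = \Omega(n^2)$ triples $(a_1, a_2, p)$ is a unit equilateral triangle.

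\textbf{Upper bound.} Applying Theorem~\ref{diam simplices} gives only $\mathrm{DIAM}(5,3,n) = O(n^{5/2+\varepsilon})$, so the exponent must be improved by $\tfrac12$. My plan is a double counting of (triangle, vertex) pairs combined with a structural argument on the $4$-sphere. For every $p \in P$, the unit neighborhood $N(p) := \{q \in P : \|p-q\|=1\}$ lies on $\S_p^4$, the sphere of radius $1$ around $p$, and has ambient diameter at most $1$ since $P$ does. Unit triangles through $p$ correspond bijectively to unit-distance pairs inside $N(p)$, so it suffices to bound $\sum_{p \in P} \bigl(\text{unit-distance pairs in } N(p)\bigr) = O(n^2)$. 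The lower bound construction shows that a single vertex alone can already contribute $\Omega(n^2)$ to this sum (take $p$ to be the apex point of the construction, whose neighborhood is precisely the Lenz-type configuration $A_1 \cup A_2$); consequently a purely local bound cannot suffice, and the argument must exploit global structure on $P$.

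\textbf{Structural step and main obstacle.} The plan for the global step is to combine (i) a classification of the dense neighborhoods $N(p)$ into Lenz-type configurations---two orthogonal radius-$1/\sqrt{2}$ arcs on $\S_p^4$ that saturate $\Omega(|N(p)|^2)$ unit pairs---with (ii) a Kővári--Sós--Turán / Zarankiewicz argument for the $3$-uniform semi-algebraic hypergraph of unit triangles, in the spirit of Theorem~\ref{simplex no K_{t,t}} and the semi-algebraic hypergraph bounds of Do~\cite{repcomp}, adapted to the diameter-$1$ setting in $\R^5$. The main technical obstacle will be to make the global step lose at most a constant factor rather than an $n^\varepsilon$ factor, so that the final exponent is exactly $2$. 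Concretely, I would aim to prove a stability-type statement asserting that if $\Omega(n)$ vertices $p$ simultaneously host Lenz-type $N(p)$'s of size $\Omega(n^{1-\varepsilon})$, the overlap structure forces enough unit $5$-cliques in the diameter graph of $P$ to contradict Schur's theorem $\mathrm{DIAM}(5,5,n) \le n$; degenerate cases in which the neighborhoods degenerate to arcs of a single circle (rather than full Lenz pairs) would be handled by a separate, easier argument bounding the contribution by $O(n^2)$ directly via the $\R^3$ edge bound of Grünbaum--Heppes--Straszewicz.
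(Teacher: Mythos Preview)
Your lower bound is correct and essentially the standard Lenz-type construction.

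Your upper bound, however, is not a proof but a research plan, and you explicitly flag the decisive step as an open obstacle (``I would aim to prove a stability-type statement\ldots''). You correctly observe that a single apex vertex can already have $\Omega(n^2)$ unit pairs in its neighbourhood, so no local per-vertex bound can work; but the proposed global remedy via Schur's theorem and semi-algebraic Zarankiewicz is speculative, and you give no argument that it avoids the $n^{\varepsilon}$ loss. As written, nothing here establishes $O(n^2)$.

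The paper's route is completely different and avoids these difficulties. It passes to the $k$-partite quantity $\phi_3^{(G)}(n)$ and distinguishes two cases on the auxiliary graph $G$ on $[3]$. If $G$ has at least one edge, say between parts $A$ and $B$, then (by Lemma~\ref{spheres}) $A$ lies on a $2$-sphere and $B$ on a circle in orthogonal flats; splitting $C$ according to whether a point has at most two unit-neighbours in $B$ (giving $O(n^2)$ trivially) or lies on the same $2$-sphere as $A$ (where Swanepoel's diameter-graph bound gives $O(n)$ unit pairs) yields $\phi_3^{(G)}(n)=O(n^2)$ directly. If $G$ is empty, the diameter cutting of Theorem~\ref{diam cutting} (complexity $r^{\lfloor (d+1)/2\rfloor}$ rather than $r^d$) makes the LP value satisfy $\xi(G,\pmb{\lambda}) \le 9/5 < 2$, and the recursion \eqref{recursion diam} then closes at exponent exactly $2$ because the strict inequality absorbs the $r^{\varepsilon}$ factor, exactly as in the proof of Theorem~\ref{k=4,d=7}. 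The two ideas you are missing are the improved cutting for intersections of balls, and the direct handling of the ``one edge'' case via the concentric sphere/circle structure plus Swanepoel's theorem.
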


Finally, we prove that for diameter graphs, we can replace $d$ by $\lfloor (d+1)/2\rfloor$ in the exponents of the bound in Theorem~\ref{zarankiewicz unit distances}. 
\begin{theorem}\label{zarankiewicz diameters}
Let $G=(P_1,P_2,E)$ be a bipartite diameter graph in $\mathbb{R}^d$ with $|P_1|=m$, $|P_2|=n$. If $G$ is $K_{u,u}$-free, then for any $\varepsilon>0$  \[|E(G)|=O_{d,\varepsilon} \left (u^{\frac{2}{\lfloor (d+1)/2 \rfloor+1}}(mn)^{\frac{\lfloor (d+1)/2\rfloor}{\lfloor (d+1)/2\rfloor+1}+\varepsilon}+um^{1+\varepsilon}+un^{1+\varepsilon}\right ).\]
\end{theorem}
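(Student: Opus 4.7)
The plan is to follow the cutting-based proof of Theorem~\ref{zarankiewicz unit distances} essentially verbatim, while exploiting a Lenz-type structure of diameter graphs to replace the ambient dimension $d$ by $t := \lfloor(d+1)/2\rfloor$ throughout. Recall that in Theorem~\ref{zarankiewicz unit distances} each $q\in P_2$ contributes the unit sphere $S(q,1)$, edges of $G$ are counted as point-sphere incidences, and an $r$-cutting of the spheres yields $O(r^d)$ cells, each pierced by at most $n/r$ spheres; applying K\"ov\'ari--S\'os--Tur\'an inside each cell (using the $K_{u,u}$-freeness) and optimizing in $r$ then gives the exponents of that theorem.

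My plan is as follows. First, by Jung's theorem I may assume that $P_1\cup P_2$ lies in a ball of radius $<\tfrac{1}{\sqrt{2}}$. Second, I would invoke a Lenz-type structural decomposition of the vertex set into $O_d(1)$ blocks, each supported on an affine subspace or round sphere of dimension at most $t$, after possibly discarding $O_d(u(m+n)^{1+\varepsilon})$ edges that get absorbed into the linear terms of the bound. Such a decomposition is suggested and essentially dictated by the extremal Lenz construction---whose diameter graph in $\R^{2t}$ is $t$-partite with parts lying on pairwise orthogonal $2$-planes---combined with the fact that the $K_{u,u}$-free assumption forces edges outside of any Lenz-like structure to be sparse. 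Third, within each block I would run the cutting argument in ambient dimension $t$: an $r$-cutting now produces only $O(r^t)$ cells, each crossed by at most $n/r$ spheres, and the same computation as in Theorem~\ref{zarankiewicz unit distances} with $d$ replaced by $t$ yields the claimed factor $u^{2/(t+1)}(mn)^{t/(t+1)+\varepsilon}$.

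The principal obstacle is establishing the block decomposition in a way that is both fine enough to reduce the effective dimension to $t$ and loose enough not to lose more than constant factors (depending on $d$) on the incidence count. For $d\le 3$ it follows essentially from the Dol'nikov--Heppes theorem that every two diameters share a common vertex, which is also what makes Theorem~\ref{diam-triangle} accessible. For general $d$, the right statement parallels the analysis that Section~\ref{sec8} sketches for Theorem~\ref{diam simplices}, and I expect the proof to proceed by iteratively peeling off Lenz-type layers: at each stage one isolates a large set of points concentrating on a round $2$-sphere inside some $2$-plane, attributes to the current layer the diameter edges incident to that sphere, and recurses in the orthogonal complement. The $K_{u,u}$-free condition is used both to control the size of each Lenz layer and to bound the ``cross'' edges between consecutive layers, which is what ultimately limits the effective dimension to $t=\lfloor(d+1)/2\rfloor$.
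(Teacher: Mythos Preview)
Your proposal takes a substantially harder route than the paper, and the central step you flag as ``the principal obstacle''---the Lenz-type block decomposition reducing the effective dimension to $t=\lfloor(d+1)/2\rfloor$---is not established. As stated, it is not clear such a decomposition exists with only $O_d(1)$ blocks and only $O_d(u(m+n)^{1+\varepsilon})$ discarded edges: diameter graphs need not sit on a bounded number of low-dimensional spheres, and the iterative peeling you describe does not obviously terminate after $O_d(1)$ rounds or control the cross edges with the required precision. So as written this is a genuine gap.

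The paper avoids all of this. Its proof of Theorem~\ref{zarankiewicz diameters} is literally the proof of Theorem~\ref{zarankiewicz unit distances} with one substitution: the sphere cutting of Theorem~\ref{1/r-cutting} (which gives $O(r^{d+\varepsilon})$ cells) is replaced by the diameter cutting of Theorem~\ref{diam cutting} (which gives $O(r^{\lfloor(d+1)/2\rfloor+\varepsilon})$ cells). The reason the latter applies is that in a diameter graph every point of $P_1\cup P_2$ lies in the intersection $B$ of all the closed unit balls centred at the points of $P_2$; hence one only needs a $1/r$-cutting of $B$, not of all of $\mathbb{R}^d$. After the standard lifting of spheres to hyperplanes in $\mathbb{R}^{d+1}$, $B$ becomes a convex polytope with at most $n$ facets, and the Upper Bound Theorem bounds its face count by $O(n^{\lfloor(d+1)/2\rfloor})$. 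That is the entire source of the improved exponent---no structural decomposition of the point set is needed. Running the identical recursion with $d$ replaced by $\lfloor(d+1)/2\rfloor$ in the cell count then gives the stated bound directly.
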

In particular, if we have an $n$-vertex diameter graph in $\R^4$ without a $K_{100,100}$, then the number of edges in such a graph is at most $C_{\epsilon}n^{4/3+\epsilon}$.

\section{Cuttings}\label{sec3}

Let $\Xi$ be a decomposition (or sometimes called a subdivision) of a set $U\subseteq \mathbb{R}^d$ into relatively open cells. The \emph{size} of $\Xi$ is its total number of cells.

\begin{definition}
An algebraic surface $\sigma$ \emph{crosses} a cell $\tau$ of $\Xi$, if $\emptyset\ne \sigma$ and $\sigma\cap \tau\ne C$. In words, $\sigma$ crosses $\tau$ if $\sigma$ contains some, but not all, points of $C$.
\end{definition}

Let $\Sigma=\{\sigma_1,\dots,\sigma_m\}$ be a collection of algebraic surfaces. $\Xi$ is called \emph{$1/r$-cutting} of $\Sigma$, if each cell of $\Xi$ is crossed by at most $|\Sigma|/r$ members of $\Sigma$.
The cutting results below guarantee the existence of small $1/r$-cuttings for collections of surfaces. We will need a variety of cutting results for different questions that we address. Although most of the cutting results we use appeared in the literature before in some form, in some cases they are rather implicit. Since extracting the explicit statements and their proofs are far from being obvious, we decided to sketch their proofs in the Appendix. In this section, we only state the results.

It is important to emphasise that \emph{crossing} is a different notion than \emph{intersecting}. The difference is  crucial. The cutting results we use are not true with if crossing is replaced with intersection.

In the proof of Theorem~\ref{zarenkiewicz semialg} on semi-algebraic graphs, we will use the following results of Chazelle et al. and of Koltun \cite{curvecut,Koltun}.

\begin{thm}\label{semialg cutting} Let $\Sigma$ be a collection of $n$ algebraic surfaces of degree at most $t$ in $\mathbb{R}^d$. Then the following is true for any $\varepsilon>0$ and $1\leq r \leq n$.
\begin{enumerate}
\item[(i)] \cite{curvecut,Koltun} If $d=2,3,4$ then there is a subdivision $\Xi$ of $\mathbb{R}^d$ of size $O_{t,\eps}\left ( r^{d+\eps}\right)$ which is a $1/r$-cutting $\Xi$ of $\Sigma$. Moreover, for any set of $m$ points $P\subseteq\mathbb{R}^d$ the subdivision $\Xi$ can be chosen so that each cell contains at most $m/r^d$ points of $P$.

\item[(ii)] \cite{Koltun} If $d\geq 5$ then there is a subdivision $\Xi$ of $\mathbb{R}^d$ of size $O_{d,t,\varepsilon}\left (r^{2d-4+\varepsilon}\right )$ which is a $1/r$-cutting of $\Sigma$ of size $O_{d,t,\varepsilon}\left (r^{2d-4+\varepsilon}\right )$. Moreover, for any set of $m$ points $P\subseteq\mathbb{R}^d$ the subdivision $\Xi$ can be chosen so that each cell contains at most $m/r^{2d-4}$ points of $P$.
    \end{enumerate}
    \end{thm}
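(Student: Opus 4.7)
My plan is to build the cutting via the classical Clarkson--Shor random sampling paradigm together with a vertical decomposition of the sampled arrangement, then refine the resulting decomposition to additionally control point counts.

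I would first take a random sample $R\subseteq \Sigma$ of size $s=\Theta(r\log r)$ and form the \emph{vertical decomposition} $\Xi_R$ of the arrangement $\mathcal{A}(R)$ of the surfaces in $R$. Vertical decomposition is the standard tool to subdivide an arrangement of bounded-degree algebraic surfaces into relatively open cells each bounded by $O_t(1)$ algebraic pieces of degree $O_t(1)$, so that ``crossing a cell'' becomes a semi-algebraic predicate of constant complexity. The two combinatorial complexity estimates that feed in are: for $d\le 4$, the bound of Chazelle, Edelsbrunner, Guibas and Sharir (and its refinements, \cite{curvecut}) giving complexity $O_{t,\varepsilon}(r^{d+\varepsilon})$; and for $d\ge 5$, Koltun's theorem \cite{Koltun} giving complexity $O_{d,t,\varepsilon}(r^{2d-4+\varepsilon})$.

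Next I would apply the $\varepsilon$-net theorem of Haussler and Welzl to the range space on ground set $\Sigma$ whose ranges are indexed by ``canonical cells'' $\tau$ (those defined by a bounded subset of $\Sigma$) and equal to $\{\sigma\in\Sigma: \sigma\text{ crosses }\tau\}$; since each canonical cell is described by $O_t(1)$ surfaces and crossing is semi-algebraic, this range space has VC-dimension $O_{d,t}(1)$. A sample of size $\Theta(r\log r)$ is then a $(1/r)$-net with positive probability, and a short argument (ruling out, for each fixed cell $\tau$ of $\Xi_R$, the event that $\tau$ survives while $>|\Sigma|/r$ surfaces cross it) gives that every cell of $\Xi_R$ is crossed by at most $|\Sigma|/r$ members of $\Sigma$. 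To handle the second clause about the points $P$, I would refine $\Xi_R$: for each cell $\tau$ with more than $m/r^d$ points of $P$ (respectively $m/r^{2d-4}$ points for $d\ge 5$), further subdivide using $O_d(1)$ axis-aligned splits balanced on the $P$-count inside $\tau$. Since the total $P$-count is $m$ and each extra split at most doubles the local cell count while halving the $P$-mass, the total number of refined cells remains of the same order as the size of $\Xi_R$. Alternatively, one can incorporate points into the sampling from the outset via the weighted Chazelle--Friedman hierarchical cutting, which directly produces a decomposition satisfying both constraints simultaneously.

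The hard part is the vertical decomposition complexity for $d\ge 5$: Koltun's bound $O_{d,t,\varepsilon}(r^{2d-4+\varepsilon})$ requires a delicate inductive analysis of cross-sections of algebraic surfaces and is the deepest ingredient in the argument. I would cite it as a black box rather than reprove it. The VC-dimension bound for semi-algebraic crossing predicates is standard (Milnor--Thom), and the refinement step for point counts is routine. Since the paper's own Appendix is devoted to giving a careful exposition of these cutting estimates with the needed point-count refinement, I would defer the technical details to that appendix and use the present section only to state what is needed downstream.
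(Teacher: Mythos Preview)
Your proposal is correct and follows essentially the same architecture as the paper's Appendix: take a random sample, invoke the vertical decomposition complexity bounds of \cite{curvecut,Koltun} as a black box to control the size of the resulting subdivision, and then refine cells by hyperplane splits to cap the point counts (this last step is exactly the Remark following Theorem~\ref{diam cutting} in the paper). The one genuine difference is the probabilistic tool: the paper works in Mulmuley's abstract configuration-space framework (Theorems~\ref{thmcut1} and~\ref{thmdecomp1}), bounding directly the probability that some active configuration over the sample has conflict list larger than $c\frac{n}{r}\log r$; you instead pass through the Haussler--Welzl $\varepsilon$-net theorem, using a Milnor--Thom bound on the VC-dimension of the crossing range space. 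Both routes are standard and yield the same outcome here. The configuration-space argument has the minor advantage that one only needs to check property~(P) (bounded degree, boundedly many configurations per trigger set) rather than a VC-dimension bound, and it interfaces cleanly with the ``canonical decomposition scheme'' formalism of Section~\ref{sec22}; your route is arguably more widely known and makes the dependence on the semi-algebraic nature of the crossing predicate more explicit.
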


In the proof of Theorems~\ref{unitsimplices} and~\ref{zarankiewicz unit distances}, we will use cuttings for collections of spheres. The \emph{dimension} of a sphere $\mathbb{S}^{d-1}$ in $\mathbb{R}^d$ is $(d-1)$. The proof of the following cutting result is sketched by Agarwal and Sharir \cite{AS}.
    
    \begin{thm}[\cite{AS}]\label{1/r-cutting} Let $\Sigma_1,\dots,\Sigma_k$ be $k$ sets of spheres in $\mathbb{R}^d$. Then the following statement hold for every $1\leq r \leq n$ and every $\varepsilon>0$.
    \begin{enumerate}
    \item There is a subdivision $\Xi$ of $\mathbb{R}^d$ of size $O_{d,k,\varepsilon}\left (r^{d+\varepsilon}\right )$ which is a $1/r$-cutting of each $\Sigma_i$. Moreover, for any set of $m$ points $P\subseteq \mathbb{R}^d$ the subdivision $\Xi$ can be chose so that each cell contains at most $m/r^d$ points of $P$.
    \item If $\mathbb{S}$ is sphere of dimension $\lambda$ in $\mathbb{R}^d$, then there is  a subdivision of $\mathbb S$ of size $O_{d,k,\varepsilon}\left (r^{\lambda+\varepsilon}\right )$, which is a $1/r$-cutting of each of $\Sigma_i$. Moreover, for any set of $m$ points $P\subseteq \mathbb{S}$ the subdivision $\Xi$ can be chose so that each cell contains at most $m/r^d$ points of $P$. 
    \end{enumerate}
    \end{thm}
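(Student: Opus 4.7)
The strategy is to combine the Clarkson--Shor random sampling framework with $\varepsilon$-nets, exploiting the fact that spheres in $\R^d$ linearise to hyperplanes in $\R^{d+1}$ via the paraboloid lift $(x_1,\ldots,x_d)\mapsto(x_1,\ldots,x_d,\sum_{i} x_i^2)$. This linearisation implies that the arrangement of $N$ spheres in $\R^d$ has combinatorial complexity $O(N^d)$ (matching the hyperplane case), and, via the vertical decomposition methods of Chazelle--Friedman and Koltun specialised to this setting, that the vertical decomposition has $O(N^{d+\varepsilon})$ cells. Since $k$ is a constant, treating $\bigcup_i\Sigma_i$ as a single family while keeping the $\Sigma_i$-labels for bookkeeping reduces the multi-family statement to a strengthening of the single-family cutting.

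\textbf{Part (1).} I would draw a random sample $R\subseteq\bigcup_i\Sigma_i$ of size $s=\Theta(r\log r)$, form the vertical decomposition of the arrangement $\mathcal{A}(R)$ of size $O(s^{d+\varepsilon})$, and apply the Haussler--Welzl $\varepsilon$-net theorem. The relevant range space has as ranges the sets ``cells of the decomposition crossed by a fixed sphere''; this has bounded VC-dimension, so with positive probability every cell is crossed by at most $|\Sigma_i|/r$ members of each $\Sigma_i$ for all $i=1,\ldots,k$ simultaneously. A standard hierarchical cutting argument of Chazelle then removes the $\log r$ factor, absorbing it into $r^\varepsilon$ and yielding a cutting of size $O_{d,k,\varepsilon}(r^{d+\varepsilon})$. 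To additionally guarantee that each cell contains at most $m/r^d$ points of $P$, I would enlarge the range space by adding the ranges ``cells containing more than $m/r^d$ points of $P$''; the VC-dimension remains bounded, so the same sample size works, and any residual cell that is still too rich in points can be further subdivided by coordinate hyperplanes, multiplying the cell count only by a constant.

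\textbf{Part (2).} Let $\mathbb{S}$ be a sphere of dimension $\lambda$ in $\R^d$. For each $\sigma\in\bigcup_i\Sigma_i$, the intersection $\sigma\cap\mathbb{S}$ is either empty, all of $\mathbb{S}$ (negligible), or a sphere in $\mathbb{S}$ of dimension $\lambda-1$. Choose a point $p\in\mathbb{S}$ lying on none of these intersection spheres; stereographic projection from $p$ is a bijection $\mathbb{S}\setminus\{p\}\to\R^\lambda$ sending each intersection sphere to an honest sphere in $\R^\lambda$ (the exceptional case of a hyperplane image is excluded by the choice of $p$). Applying Part (1) to the resulting collections of spheres in $\R^\lambda$ yields a subdivision of $\R^\lambda$ of size $O_{d,k,\varepsilon}(r^{\lambda+\varepsilon})$, which pulls back to a subdivision of $\mathbb{S}$ of the same size and satisfies both the crossing and point-distribution conditions.

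\textbf{Main obstacle.} The technical crux is the $O(s^{d+\varepsilon})$ bound on the vertical decomposition of $s$ spheres in $\R^d$: for $d\le 4$ this is classical, but for $d\ge 5$ the general algebraic bound is only $O(s^{2d-4+\varepsilon})$, which would give a substantially weaker cutting. One must therefore use the linearisation of spheres to hyperplanes in one extra dimension, exploiting that the arrangement effectively lives in dimension $d+1$ restricted to the paraboloid, to recover the correct exponent $d$ rather than $2d-4$. A secondary subtlety is orchestrating the iterative removal of the $\log r$ factor while simultaneously preserving the per-family crossing bound for every $\Sigma_i$ and the per-cell point-count bound for $P$; this is handled by a hierarchical cutting in which each recursive subproblem inherits all constraints at every level.
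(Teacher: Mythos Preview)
Your proposal is correct in outline and shares the key idea with the paper --- namely, that the paraboloid lift $x\mapsto(x,\|x\|^2)$ linearises spheres to hyperplanes in $\R^{d+1}$, so that the exponent one should aim for is $d$ rather than the generic $2d-4$. The executions, however, differ in two notable respects.

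First, for Part~(1) the paper does not appeal to vertical decompositions of sphere arrangements at all. Instead, after lifting, it takes the \emph{bottom-vertex triangulation} of the resulting hyperplane arrangement in $\R^{d+1}$ and retains only the simplices meeting the paraboloid $Q=\varphi(\R^d)$. The cell-count bound $O(r^{d}\log r)$ then follows immediately from the Generalised Zone Theorem of Aronov, Pellegrini and Sharir applied to $Q$ (a $d$-dimensional surface in $\R^{d+1}$, giving exponent $\lfloor((d+1)+d)/2\rfloor=d$). The sampling is handled by the abstract Clarkson--Shor argument in configuration spaces, and the $\log r$ factor is simply absorbed into $r^{\varepsilon}$ by sampling $r^{1+\varepsilon/d}$ elements. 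Your route via vertical decompositions plus $\varepsilon$-nets plus Chazelle's hierarchical refinement would also work, but it carries more overhead and leaves implicit precisely the complexity bound you flag as the ``main obstacle'': you would ultimately have to invoke the Zone Theorem anyway to justify $O(N^{d+\varepsilon})$ for the decomposition restricted to the paraboloid. (A side remark: your description of the range space is dualised --- the ground set should be the spheres, with a range for each potential cell consisting of the spheres crossing it.)

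Second, for Part~(2) your reduction via stereographic projection from $\mathbb S$ to $\R^{\lambda}$ is a genuinely different and perfectly valid alternative. The paper instead stays with the lift: it observes that $\varphi(\mathbb S)$ lies in a $\lambda$-flat, passes to the $(\lambda+1)$-dimensional affine span of that flat together with the last coordinate, and repeats the Part~(1) argument verbatim in this lower-dimensional ambient space. Your approach is arguably more transparent; the paper's has the advantage of reusing the same decomposition machinery without a change of model. Finally, the per-cell point bound is dispatched in the paper by the trivial post-processing of slicing over-full cells with hyperplanes, rather than by augmenting the range space as you propose.
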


In the proofs of Theorems~\ref{diam simplices} and~\ref{diam-triangle} we use the following cutting result. 
    
\begin{theorem}\label{diam cutting} Let $\Sigma_1,\dots,\Sigma_k$ be $k$ sets of spheres in $\mathbb{R}^d$ and $B$ be the intersection of balls bounded by the members of $\bigcup_{i\in[k]} \Sigma_i$. Then the following hold for every $1\leq r \leq n$ and $\varepsilon>0$.
\begin{enumerate}
\item There is a subdivision $\Xi$ of $B$ of size $O_{d,k,\varepsilon}\left (r^{\lfloor (d+1)/2 \rfloor+\varepsilon}\right )$, which is a $1/r$-cutting of each $\Sigma_i$. Moreover, for any set of $m$ points $P\subseteq B$ the subdivision $\Xi$ can be chosen so that each cell contains at most $m/r^{\lfloor (d+1)/2 \rfloor}$ points of $P$.
    
\item If $\mathbb{S}$ is a sphere of dimension $\lambda$ in $\mathbb{R}^d$, then there is subdivision $\Xi$ of the set $\mathbb S\cap B$ of size $O_{d,k,\varepsilon}\left (r^{\lfloor (\lambda+1)/2 \rfloor+\varepsilon}\right )$, which is a $1/r$-cutting of each  $\Sigma_i$. Moreover, for any set of $m$ points $P\subseteq B$ the subdivision $\Xi$ can be chosen so that each cell contains at most $m/r^{\lfloor (\lambda+1)/2 \rfloor}$ points of $P$.
    \end{enumerate}
    \end{theorem}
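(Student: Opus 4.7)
The plan is to follow the classical Clarkson--Shor random sampling framework used to prove Theorem~\ref{1/r-cutting}, with one new ingredient: the worst-case $O(r^d)$ bound on the complexity of an arrangement of $r$ spheres in $\R^d$ is replaced by a much better bound available in the present ``intersection of balls'' setting. The key observation is that every $\sigma\in\bigcup_i\Sigma_i$ bounds a ball containing $B$, so for any subset $R\subseteq\bigcup_i\Sigma_i$ the portion of $\sigma$ inside $B$ lies on the boundary of the convex body $B_R:=\bigcap_{\sigma\in R}\mathrm{Ball}(\sigma)\supseteq B$. Consequently, the arrangement $\mathcal{A}(R)$ restricted to $B$ is combinatorially a sub-complex of $\partial B_R$, and it suffices to bound the complexity of the latter.

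For that, I would use the standard paraboloid lifting $\phi\colon \R^d\to\R^{d+1}$, $\phi(x)=(x,\|x\|^2)$, under which each ball $\{x:\|x-p\|\le\rho\}$ becomes the intersection of the paraboloid with an affine halfspace (coming from the linearization $\|x\|^2-2p\cdot x+\|p\|^2-\rho^2\le 0$). Hence $B_R$ is carried onto the intersection of $r$ halfspaces in $\R^{d+1}$, restricted to the paraboloid, whose combinatorial complexity is $O(r^{\lfloor(d+1)/2\rfloor})$ by the Upper Bound Theorem for simple polytopes in $\R^{d+1}$. Since $\phi$ is a homeomorphism onto its image, this transfers to the same bound on the complexity of $\partial B_R$, and hence on the arrangement of $R$ restricted to $B$.

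Given this complexity bound, the remainder follows the template of Theorem~\ref{1/r-cutting}. Sample $O(r\log r)$ spheres (stratified so that each $\Sigma_i$ is sampled proportionally to $|\Sigma_i|$), form the arrangement restricted to $B$, and refine each face by a vertical decomposition, which costs only an $r^\eps$ overhead. By the Clarkson--Shor exponential decay theorem, with positive probability the resulting subdivision of $B$ has $O_{d,k,\eps}(r^{\lfloor(d+1)/2\rfloor+\eps})$ cells and is a $1/r$-cutting of each $\Sigma_i$. To enforce the point bound of $m/r^{\lfloor(d+1)/2\rfloor}$, any cell containing too many points is further subdivided using an $\eps$-net of the point set of appropriate size, which preserves the asymptotic cell count. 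Part~2 is reduced to Part~1 via stereographic projection: projecting $\mathbb{S}$ from a point of $\mathbb{S}$ chosen off the configuration onto $\R^\lambda$ sends each $\mathbb{S}\cap\sigma$ to a $(\lambda-1)$-sphere or hyperplane in $\R^\lambda$, and sends $\mathbb{S}\cap B$ to the intersection of $\lambda$-dimensional balls and halfspaces, after which the same lifting argument inside $\R^\lambda$ yields the claimed bound $O(r^{\lfloor(\lambda+1)/2\rfloor+\eps})$.

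The main obstacle is ensuring that the vertical decomposition of the (curved) boundary complex of $B_R$ truly incurs only an $r^\eps$ overhead, rather than inflating the complexity back to $r^{d+\eps}$: one has to invoke the sharpest known vertical decomposition bounds for arrangements of spheres of bounded degree and verify that they compose with the Upper Bound Theorem estimate without eroding the saving obtained from the lift.
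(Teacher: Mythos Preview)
Your core insight --- the paraboloid lift $\phi(x)=(x,\|x\|^2)$ turning $B_R$ into an intersection of halfspaces in $\R^{d+1}$, so that the Upper Bound Theorem gives complexity $O(r^{\lfloor(d+1)/2\rfloor})$ --- is exactly what the paper uses. The difference is in where the cell decomposition is performed, and this difference resolves the obstacle you flag at the end.

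You propose to take the sample, build the curved arrangement \emph{downstairs} in $\R^d$, and refine it by vertical decomposition; you then worry (correctly) that vertical decomposition of a sphere arrangement may inflate the count back up to $r^{d+\eps}$ or worse. The paper avoids this entirely by carrying out the decomposition \emph{upstairs}: after lifting, the sampled spheres become hyperplanes in $\R^{d+1}$, and $\phi(B)$ lies in a single cell (a simple polytope) of their arrangement. One then triangulates that polytope via the bottom-vertex triangulation, whose size is at most $d!$ times the face count, hence $O(r^{\lfloor(d+1)/2\rfloor})$ by the Upper Bound Theorem, with no $r^\eps$ loss from the decomposition step itself. This triangulation is a canonical decomposition scheme in the Clarkson--Shor sense, so the standard random-sampling bound (with stratified sampling over the $\Sigma_i$, as you suggest) yields the cutting; projecting the simplices back along the last coordinate gives the cells of $\Xi$ in $\R^d$. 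In short: lift first, decompose in the hyperplane world where triangulation is cheap, then project.

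For Part~2 your stereographic route works, but the paper's version is a bit more direct: under $\phi$ the sphere $\mathbb S$ goes to a $\lambda$-flat in $\R^{d+1}$, so one simply restricts to the $(\lambda+1)$-dimensional affine subspace spanned by this flat and the last coordinate, and repeats Part~1 there.
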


\begin{remark} 
We included in all statements a bound on the number of points of a given set in each cell. It  is usually not part of similar theorems, and we include it for convenience. It is very easy to derive from just the first part of each statement.  If a cell contains $\beta$ times more points than what is guaranteed by the second part of each statement, then we use $\lfloor \beta\rfloor$ hyperplanes to subdivide it into $\lfloor \beta\rfloor+1$ cells each of which satisfies the requirement. Note that the increase in the number of cells is insignificant and can be hidden in the $O$-notation.
\end{remark}

We mention that the main difference between the proofs of Theorem \ref{diam cutting} and  Theorem \ref{1/r-cutting} is that in the former we observe that, instead of using that $r$ hyperplanes divides $\mathbb{R}^d$ into $O_d(r^d)$ cells, we can use the fact that a $d$-dimensional polytope with $r$ facets has $O_d(r^{\lfloor d/2 \rfloor})$ faces. An exact bound is given by the Upper Bound Theorem \cite{upperbound}. 

\subsection{Overview of the proofs}
In the proofs of the results, we employ a divide-and-conquer procedure. The  points in question are partitioned into groups (in some cases, such as the bipartite Zarankiewicz's problem, the two groups are the classes of the bipartition), and on each step one group is treated as points, while the other groups are treated as surfaces. The overall problem reduces to an incidence-type problem. We then partition the space into cells using cutting results, and apply the same procedure on each of the cells.

One important feature of the proofs, which appears when we partition the spaces into cells, is the dichotomy between surfaces crossing a cell and surfaces containing a cell. This can be put in parallel with the structure (containing) and quasirandomness (crossing) dichotomy. Cutting results only guarantee certain properties for surfaces that cross cells, and thus surfaces that cross cells are amenable to random techniques. Nothing can be inferred about surfaces containing a cell using cuttings, however, in this situation we gain a lot of structural information. For example, in the case when we work with unit distance graphs or unit simplices,  unit spheres centred at the points play the role of surfaces. Then, having a group of unit spheres that contain a cell means that the centres of these spheres form a complete bipartite graph with the points that lie inside the cell. 

In many cases, such as the Zarankiewicz-type results, we can argue that such situations are impossible. In other questions, such as the congruent simplices question, if the divide and conquer does not progress, we gradually make the situation more and more structured, until it is easy to deal with. 

Our proof of the results on the number of simplices  follow the proof of Agarwal and Sharir~\cite{AS}.  Using  their technique, they managed to obtain a  recurrence for $\mathrm{C}(d,k,n)$. Solving this recurrence leads to the bound $\mathrm{C}(d,k,n)=O(n^{\zeta(d,k)+\eps})$, where $\zeta(d,k)$ is a feasible solution for a family of LP problems, indexed by certain weighted graphs (see Section~\ref{sec6} for details). They solved these LP's for many $k$ and $d\le 7$. However, they did not manage to progress on solving these LP's for general $d$ and left it as an open problem. The main new part of our proofs here is the analysis of these LP's that uses some additional geometric information and combinatorial ideas.

\section{Proof of Zarankiewicz-type results}\label{sec4}
In what follows, for a graph $G = (V,E)$ and two disjoint subsets of the vertex set $X,Y\subset V$, we denote by $E(X,Y)$ the set of all edges of $G$ that connect a vertex from $X$ with a vertex from $Y$. We also denote by $G(X,Y)$ the bipartite graph with vertex set $X\cup Y$ and edge set $E(X,Y).$

First we illustrate the common structure of the proofs on the example of Theorem~\ref{zarankiewicz unit distances} with $|A|=|B|$ and for constant~$u$.

\begin{proof}[Proof of Theorem~\ref{zarankiewicz unit distances} with $|P_1|=|P_2|$ and for constant $u$] 

Let $g(m,n)$ be the maximum number of unit distances between two disjoint sets of size $m$ and $n$ in $\R^d$, given that the induced unit distance graph between the two sets does not contain $K_{u,u}$. 
In these terms, Theorem~\ref{zarankiewicz unit distances} in our particular case states that $g(n,n)=O_{d,\varepsilon}(n^{2d/(d+1)+\varepsilon})$. First we derive a recursive inequality for the asymmetric $g(m,n)$,  and from the inequality we derive the statement in the symmetric case.

Take point sets $P_1,P_2\subseteq \mathbb{R}^d$ with $|P_1|=m$ and $|P_2|=n$, such that the number of unit distances between $P_1$ and $P_2$ is equal to $g(m,n)$. Let $G = (P_1,P_2,E)$ be the unit distance graph between $P_1$ and $P_2.$

For each $p\in \R^d$, let $\sigma_p$ be the unit sphere centred at $p$. Put $\Sigma:=\{\sigma_p: p\in P_1\}$  and let $r>0$ be a parameter to be chosen later. Note that two points $p_1\in P_1$ and $p_2\in P_2$ are at unit distance apart if and only if $p_2\in \sigma_{p_1}$.

By Theorem~\ref{1/r-cutting}, there is a decomposition $\Xi$ of $\mathbb{R}^d$ into $O_{d,\varepsilon}(r^{d+\varepsilon})$ cells, such that each cell is crossed by at most $n/r$ spheres from $\Sigma$ and each cell contains at most $m/r^d$ points of $P_2$. 

For a cell $\tau\in \Xi$, put $P_1^{\tau}=P\cap \tau$ and let $\widetilde{P}_2^{\tau},  \widehat{P}_2^{\tau}\subseteq P_2$ be the sets of those $p\in P_2$ for which $\sigma_p$ crosses or contains $\tau$, respectively. (Note that $\widetilde{P}_2^{\tau}$ is disjoint from $\widehat{P}_2^{\tau}$.) A pair of points $p_1\in P_1^{\tau}$ and $p_1\in P_2$ is at unit distance apart only if $\sigma_{p_2}$ either crosses or contains $\tau$. Thus, each edge in $G$ belongs to $E(P_1^{\tau},\widetilde{P}_2^\tau\sqcup \widehat{P}_2^{\tau})$ for some $\tau\in \Xi$.  We obtain
\[E(P_1,P_2)=\bigcup_{\tau\in \Xi} \Big (E(P_1^{\tau},\widetilde{P}_2^{\tau})\sqcup E(P_1^{\tau},\widehat{P}_2^{\tau})\Big ).\]

Each cell $\tau$ is crossed by at most $n/r$ spheres and contains at most $m/r^d$ points of $P$. Moreover, the induced graph of unit distances is $K_{u,u}$-free. Thus, we obtain $|E(P_1^{\tau},\widetilde{P}_2^{\tau})|\leq g(m/r^d,n/r)$. Further, the graph of unit distances between $P_1^{\tau}$ and $\widehat{P}_2^{\tau}$ is complete bipartite, and thus one of the parts must have size smaller than $u$ since $G$ is $K_{u,u}$-free. Therefore, $|E(P_1^{\tau},\widehat{P}_2^{\tau})|\leq (u-1)(n+m)=O_u(n+m)$. These two inequalities imply
\[g(m,n)\leq O_{d,\varepsilon,u}(r^{d+\varepsilon})\left (g(m/r^d,n/r)+n+m\right ).\]
Applying the  bound above with $m' = n/r$ and $n' = m/r^d$, we get
\begin{equation*}
g(n/r, m/r^d)\leq O_{d,\varepsilon,u}(r^{d+\varepsilon})\left (g(n/r^{d+1},m/r^{d+1})+n/r+m/r^d\right ),
\end{equation*}
Now we use the fact that $f$ is symmetric in its two variables, namely that $g(n/r,m/r^d) = g(m/r^d,n/r)$, and combine the two displayed bounds:
\[g(m,n)\leq O_{d,\varepsilon,u}(r^{2d+2\varepsilon})\left (g(n/r^{d+1},m/r^{d+1})+n+m\right ).\]
Assuming that $r$ is a sufficiently large integer (that is, dependent on $d,\varepsilon$ only, but not on $n,m$) and that $n=m$, the inequality above implies $g(n,n)\le r^{2d+3\varepsilon}g(n/r^{d+1},n/r^{d+1})$. If $n$ has the form $n = r^{k(d+1)}$ for integer $k$ then the previous inequality, applied repeatedly, implies $g(n,n)\le r^{k(2d+3\varepsilon)}g(1,1) = n^{(2d+3\varepsilon)/(d+1)}$, and therefore for any sufficiently large $n$ we have $g(n,n)\le n^{(2d+4\varepsilon)/(d+1)}\le n^{2d/(d+1)+4\varepsilon}.$ Since $\varepsilon>0$ was arbitrary, we may replace $4\varepsilon$ in the last expression by $\varepsilon$. This immediately implies that $g(n,n)=O_{d,\varepsilon,u}(n^{2d/(d+1)+\varepsilon})$, which finishes the proof.
\end{proof}

The proofs of (all parts of) Theorem~\ref{zarenkiewicz semialg} and Theorem~\ref{zarankiewicz unit distances} all follow the same strategy and are identical modulo the cutting results we use. Thus, in what follows we only present the proof of the third part of Theorem~\ref{zarenkiewicz semialg}.

A collection $f_1,\dots,f_n\in \mathbb{R}[x_1,\dots,x_d]$ of polynomials define a partition of $\mathbb{R}^d$ into relatively open regions in the following way. Let $\approx$ be an equivalence relation on $\mathbb{R}^d$ such that $x\approx y$ iff $\setbuilder{i}{f_i(x)\gtreqqless0}=\setbuilder{i}{f_i(y)\gtreqqless 0}$. The regions of the partition are the connected components of the equivalence classes.

\begin{proof}[Proof of Theorem~\ref{zarenkiewicz semialg} part 2]
Let $h_u(m,n)$ denote the maximum number of edges of a $K_{u,u}$-free semi-algebraic bipartite graph in $\mathbb{R}^d$ of complexity $t$ and with classes of size $m$ and $n$. Take any such graph $G=(P_1\cup P_2,E)$ with $|E| = h_u(m,n)$ and parts $P_1,P_2$. Assume also that $m\ge n$.

Suppose that $G$ is defined by the polynomials $f_1,\dots,f_t$ of degree at most $t$ and the Boolean function $\Phi(X_1,\dots,X_t)$: 
\[(p_1,p_2)\in E \Leftrightarrow \Phi\left (f_1(p_1,p_2)\geq 0,\dots,f_t(p_1,p_2)\right)\geq 0)=1.\] 

For each $i\in [t],$ and $p_2\in P_2$, define the following  surfaces, determined by a $d$-variate polynomial of degree at most $t$:
\[\sigma_{p_2,i}:=\setbuilder{x\in \mathbb{R}^d}{f_i(x,p_2)=0}.\]
Put 
\[\Sigma:=\setbuilder{\sigma_{p_2,i}}{i\in [t], p_2\in P_2}.\]
Note that $\Sigma_2$ is a collections of $tn$ algebraic surfaces in $\R^d$ of degree at most $t$.

Let $r>0$ be a parameter to be chosen later.
By Theorem~\ref{semialg cutting}~(ii), there is a $1/tr$-cutting $\Upxi$ of $\Sigma$ of size $O_{t,\varepsilon,d} \left ( (tr)^{2d-4+\varepsilon} \right )=O_{t,\varepsilon,d} \left ( r^{2d-4+\varepsilon} \right )$ such that each cell of $\Upxi$ contains at most $m/r^{2d-4}$ points of $P_1$.

For each $p_2\in P_2$, consider the partition of $\mathbb{R}^d$  defined by the surfaces $\setbuilder{\sigma_{p_2,i}}{i\in [t]}$. Then there is a subset $R$ of these regions such that for a vertex $p_1\in P_1,$ we have $(p_1,p_2)\in E$ if and only if $p_1$ belongs to one of the regions from $R$. Thus, for any vertex $p_2\in P_2$ and cell $\tau\in \Upxi,$ one of the following three possibilities hold. 
\begin{enumerate}
\item $\tau$ is crossed by at least one member of $\setbuilder{\sigma_{p_2,i}}{i\in [t]}$.
\item For every vertex $p_1\in P_1\cap \tau$  we have $(p_1,p_2)\in E$.
\item For every vertex $p_1\in A\cap \tau$  we have $(p_1,p_2)\notin E$.
\end{enumerate}

Given $\tau\in \Upxi,$ let $\widetilde{P}_2^{\tau}\subseteq P_2$ be the set of those vertices $p_2$ for which there is an $i\in [t]$ such that $\tau$ is crossed by $\sigma_{p_2,i}$. In other words, $\widetilde{P}_2^{\tau}$ is the set of those vertices for which $1.$ holds. Further, let $\widehat{P}_2^{\tau}\subseteq P_2$ be the set of those vertices, for which $2.$ holds, and let $P_1^{\tau}:=P_1\cap \tau$.
Then we have
\[E=\bigcup_{\tau\in\Upxi} \left ( E(P_1^{\tau},\widetilde{P}_2^{\tau})\bigcup E(P_1^{\tau},\widehat{P}_2^{\tau})\right ).
\]

Remember that each $\tau\in \Upxi$ contains at most $m/{r^{2d-4}}$ points of $P_1$, and is crossed by at most $tn/tr=n/r$ members of $\Sigma$. Thus \[|E(P_1^{\tau},\widetilde{P}_2^{\tau})|\leq h_u\left (\frac{m}{r^{2d-4}},\frac{n}{r}\right ).\]

Since $G(P_1^{\tau}, \widehat{P}_2^{\tau})$ is a complete bipartite graph and $G$ is $K_{u,u}$-free, we have
\[|E(P_1^{\tau},\widehat{P}_2^{\tau})|\leq um.\]
Summing the two displayed inequalities over all cells, we obtain
\begin{equation}\label{eqrecur}h_u(m,n)\leq \sum_{\tau\in\Upxi}\left ( |E(P_1^{\tau},\widetilde{P}_2^{\tau})|+|E(P_1^{\tau},\widehat{P}_2^{\tau})| \right )=
O_{d,t,\varepsilon}\left ( r^{2d-4+\varepsilon}\right)\left( h_u\left (\frac{m}{r^{2d-4}},\frac{n}{r}\right )+um\right).
\end{equation}
Choosing $r=r(\varepsilon,d,t)$ large enough, we may assume that $r^{\varepsilon}\ge 2$ and that \eqref{eqrecur} implies the following inequality.
\begin{equation}\label{eqrecur2} h_u(m,n)\leq r^{2d-4+2\varepsilon}\left( h_u\left (\frac{m}{r^{2d-4}},\frac{n}{r}\right )+um\right).\end{equation}

Let us first show that there exists a constant $C'=C'(d,t,\eps),$ such that the following holds: if \begin{equation}\label{eqconcond}\frac mu\ge \left(\frac nu\right)^{2d-4},\end{equation} then 
\begin{equation}\label{statpart1}h_u(m,n)\le C'um^{1+\eps}.\end{equation} 

Remark that if \eqref{eqconcond} holds for $m,n$, then it also holds for $m/r^{2d-4},n/r$. 
We prove the claim by induction on $m+n$. The inequality \eqref{statpart1} is clear if $n\le u$.
Combining the induction hypothesis and  \eqref{eqrecur2}, we get
\begin{equation*}h_u(m,n)\leq 
r^{2d-4+2\varepsilon}\left( C'u\left(\frac{m}{r^{2d-4}}\right)^{1+\eps}+um\right).
\end{equation*}
By choosing $C'$ large enough, we may assume that $m$ is large enough, and in particular $m^{\eps}>r^{4d}$. Thus, we get that
\begin{equation*}h_u(m,n)\leq 
r^{2d-4+2\varepsilon}\cdot 2C'u\left(\frac{m}{r^{2d-4}}\right)^{1+\eps}<2C'r^{-\eps}um^{1+\eps}\le C'um^{1+\eps}.
\end{equation*}
Here, the second inequality is due to  $2d-4+2\eps-(2d-4)(1+\eps)< -\eps$, valid for $d\ge 4$, and the last inequality is due to $r^{\eps}\ge 2.$ This proves \eqref{statpart1}.

Assume next that the opposite of \eqref{eqconcond} holds. Note that this is equivalent to 
\begin{equation}\label{eqconcond2} um< u^{\frac 2{2d-3}}(mn)^{\frac{2d-4}{2d-3}}.
\end{equation}
In this assumption, we shall show that there exists a constant $C=C(d,t,\eps),$ such that the following inequality holds.
\begin{equation*}\label{statpart2}h_u(m,n)\le Cu^{\frac 2{2d-3}}(mn)^{\frac{2d-4}{2d-3}+\eps}.\end{equation*} 
We remark that if \eqref{eqconcond2} holds for $m>n$, then it also holds for $m/r^{2d-4},n/r$ (as well as for $m/r, n/r^{2d-4}$, provided $m,n>C_1(r)u$). The proof is again by induction on $m+n$. The statement is true for $n\le u$, since by \eqref{eqconcond2} we have $h_u(m,u)\le mu<u^{\frac 2{2d-3}}(mn)^{\frac{2d-4}{2d-3}}.$ By making $C$ large enough, we may again assume that $m$ is large enough: $m^{\eps}\ge r^{4d}$.

Then, by induction\footnote{To apply the induction hypothesis, we implicitly use the fact that the function $h_u(m,n)$ is symmetric.} and using \eqref{eqconcond2} we have
\begin{multline*}h_u(m,n)\leq 
r^{2d-4+2\varepsilon}\left( Cu^{\frac 2{2d-3}}\left(\frac{mn}{r^{2d-3}}\right)^{\frac{2d-4}{2d-3}+\eps}+u^{\frac 2{2d-3}}(mn)^{\frac{2d-4}{2d-3}}\right)\\
\le r^{2d-4+2\varepsilon}\cdot 2Cu^{\frac 2{2d-3}}\left(\frac{mn}{r^{2d-3}}\right)^{\frac{2d-4}{2d-3}+\eps}\le
C u^{\frac 2{2d-3}}(mn)^{\frac{2d-4}{2d-3}+\eps},
\end{multline*}
where the transitions as in the previous case. This concludes the proof.\end{proof}

\section{Unit and diameter simplices}\label{sec5}
A \emph{unit simplex} is a regular simplex of edge length $1$. Our goal for this and the following section is to prove Theorem~\ref{unitsimplices} for {\it unit $k$-simplices}. The proof in the general case is identical, but involves more notation, and we stick to the unit distance case for clarity. 

This section is essentially an overview of Section~$5$ from \cite{AS}. Here, we reduce the problem to a certain linear optimization question. 

\subsection{Notation}

\subsubsection*{Unit distances}

For $k$ finite point sets  $P_1,\dots,P_k\subseteq \mathbb{R}^d$ with $1\leq k \leq d+1$, let
\[\Psi_{k,d}(P_1,\dots,P_k):=\setbuilder{(p_1,\dots,p_k)}{p_i\in P_i \textrm{ for each } i\in[k] \textrm{ and } (p_1,\dots,p_k) \textrm{ is a unit simplex} },
\]
\[ \psi_{k,d}(P_1,\dots,P_k)=|\Psi_{k,d}(P_1,\dots,P_k)|.\] Next,  define
\[\psi_{k,d}(n_1,\dots,n_k):=\max_{|P_1|\leq n_1,\dots,|P_k|\leq n_k}\psi_{k,d}(P_1,\dots,P_k).\]

With this notation, the quantity $\mathrm{U}(d,k,n)$, equal to the maximum number of unit $k$-simplices spanned by a set of $n$ points in $\mathbb{R}^d$, is upper-bounded by $\psi_{k,d}(n):=\psi_{k,d}(n,\dots,n)$. We are going to bound the latter quantity. 

Given a $k$-tuple of point sets $(P_1,\dots,P_k)$ and a graph $G=([k],E),$ we say that $G$ is {\it compatible} with the $k$-tuple, if $\{i,j\}\in E$ implies that $\|p-p'\|=1$ for every pair of points $p\in P_i$ and $p'\in P_j$. Consider a vector $\pmb{\lambda}=(\lambda_1,\dots,\lambda_k)\in \mathbb{N}^k,$  where $\lambda_i$ is the smallest $\ell$ such that there is a sphere\footnote{The dimension of a circle is $1$.} of dimension $\ell$ containing $P_i$ with the convention that $\lambda_i=d$ if there is no such sphere and with $\lambda_i=0$ if $|P_i|\le 3$. We call $\pmb\lambda$ the {\it profile} of $(P_1,\ldots, P_k).$ 

For a graph $G=([k],E)$ and $i,j\in [k]$, $i\ne j$, we denote 
\[G\cup \{i,j\}:=([k],E\cup\{\{i,j\}\}).\]
Note that, whenever we use this notation, we have $\{i,j\}\notin E,$ and thus $G\cup \{i,j\}$ has a strictly bigger edge set than $G$. Further, define
{\footnotesize\[\psi_{k,d}^{(G)}(n_1,\dots,n_k):=\max\setbuilder{\psi_{k,d}(P_1,\dots,P_k)}{|P_i|\leq n_1,\dots,|P_k|\leq n_k, \textrm{ and }  \\ G  \textrm{ is compatible with }(P_1,\dots,P_k)},
\]}
and $\psi_{k,d}^{(G)}(n_1,\dots,n_k)=0$ if no $k$-tuple $(P_1,\dots,P_k)$ is compatible with $G$. Put \[\psi_{k,d}^{(G)}(n):=\psi_{k,d}^{(G)}(n,\dots,n).\] When the value of $d$ is fixed or well understood from the context, we omit it from the subscript.
Note that $\psi_k^{(G)}(n_1,\dots,n_k)\leq \psi_k(n_1,\dots,n_k)$ for any $G$, and if $G=([k],\emptyset)$ then $\psi_k^{(G)}(n_1,\dots,n_k)=\psi_k(n_1,\dots,n_k)$.

\subsubsection*{Diameters}

We introduce analogue notation for diameters. Let \[\phi_{k,d}(n_1,\dots,n_k)=\max_{\substack{|P_1|\leq n_1,\dots,|P_k|\leq n_k\\ \diam{\cup{P_i}}=1}}\psi_{k,d}(P_1,\dots,P_k)\] and

{\small\[\phi_{k,d}^{(G)}(n_1,\dots,n_k)
:=\max_{\substack{|P_1|\leq n_1,\dots,|P_k|\leq n_k\\ \diam{\cup{P_i}}=1}}\setbuilder{\psi_{k,d}(P_1,\dots,P_k)}{ G  \textrm{ is compatible with }(P_1,\dots,P_k)},
\]}
with $\phi_{k,d}^{(G)}(n_1,\dots,n_k)=0$ if no $k$-tuple $(P_1,\dots,P_k)$ for which $\diam{\cup P_u}=1$ is compatible with $G$. Put \[\phi_{k,d}^{(G)}(n):=\phi_{k,d}^{(G)}(n,\dots,n).\] Again, when the value of $d$ is fixed or well understood from the context, we omit it from the subscript.
Note that $\phi_k^{(G)}(n_1,\dots,n_k)\leq \phi_k(n_1,\dots,n_k)$ for any $G$, and if $G=([k],\emptyset)$ then $\phi_k^{(G)}(n_1,\dots,n_k)=\phi_k(n_1,\dots,n_k)$.

\subsection{The recurrence for the unit distance case}\label{rec}

For a graph $G=([k],E)$, let $P_1,\dots,P_k\subseteq \mathbb{R}^d$, $|P_1| =\ldots = |P_k|=n$ be point sets such that $\psi_k^{(G)}(n)=\psi_k^{(G)}(P_1,\dots,P_k)$. Let $\pmb{\lambda}$ be the profile of $(P_1,\dots,P_k)$, and, for each $i\in [k]$, let $S_i$ be a sphere of dimension $\lambda_i$ containing $P_i$ (if $\lambda_i=d$ then put $S_i=\mathbb{R}^d$). Throughout this subsection, both $G$ and $\pmb \lambda$ are fixed. 

We obtain a bound on $\psi_k^{(G)}(P_1,\dots,P_k)$ in $k$ rounds. In the $i$-th round, $P_i$ is treated as a set of points, while the other $k-1$ sets are treated as sets of unit spheres centred at the corresponding points. 

For every $i\in [k]$ let 
\[V_i:=\setbuilder{j\in [k]\setminus\{i\}}{\{i,j\}\notin E}.\]
For a point $x$, we write $\sigma_x$ for the unit sphere with centre in $x$. Also, for any set of points $P$, we write
\[\Sigma(P):=\{\sigma_x: x\in P\}.\]
{\bf Round 1. }

Let $r_1$ be a parameter to be chosen later. By Theorem~\ref{1/r-cutting}, there is a subdivision $\Upxi$ of $S_1$ of size $O_{d,\varepsilon}(r_1^{\lambda_1+\varepsilon}),$  which is a $1/r_1$-cutting of each $\Sigma(P_j)$, $j=2,\ldots,k$ such that each cell contains at most $n/r_1^{\lambda_1}$ points of $P_1$. For a cell $\tau\in \Upxi$, let $P_1^{\tau}=P_1\cap \tau$. We have
\[\psi_k^{(G)}(P_1,\dots,P_k)=\sum_{\tau\in \Upxi}\psi_k^{(G)}(P_1^\tau,P_2,\dots,P_k).\]

For every $j\in V_1$, set
\[\widetilde P_j^{\tau}:=\{p\in P_j\ :\ \sigma_p\text{ crosses }\tau\},\]
\[\widehat P_j^\tau:=\{p\in P_j\ :\ \sigma_p\text{ contains }\tau\}.\]
Note that $\widetilde P_j^\tau$ is disjoint from $\widehat P_j^\tau$ for every $j\in V_1$. For $j\in V\setminus (V_1\cup \{1\})$, set $\widetilde P_j^{\tau} = \widehat P_j^{\tau} = P_j.$
 With this notation, we have
\begin{equation*}
\psi_k^{(G)}(P_1,\dots,P_k) \le \sum_{\tau\in \Upxi}\psi_k^{(G)}(P_1^\tau,\widetilde P_2^\tau,\widetilde P_3^\tau,\dots,\widetilde P_k^\tau)+\sum_{\tau\in \Upxi}\sum_{j\in V_1}\psi_k^{(G)}(P_1^\tau,P_2,\dots,P_{j-1},\widehat P_j^\tau,P_{j+1},\dots,P_k).
\end{equation*}

For each cell $\tau$ and $j\ne 1,$ the distance between the points of $P_1^{\tau}$ and $\widehat P_j^{\tau}$ is $1$. Thus, for any $j\in V_1\setminus\{1\}$, the $k$-tuple $(P_1^\tau,P_2,\dots,P_{j-1},\widehat P_j^\tau,P_{j+1},\dots,P_k)$ of sets of size at most $n$ is compatible with $G\cup\{1,j\}$. Therefore
$\psi_k^{(G)}(P_1^\tau,P_2,\dots,P_{j-1},\widehat P_j^\tau,P_{j+1},\dots,P_k)\leq \psi_k^{(G\cup\{1,j\})}(n)$.

Setting $n_j:=n/r_1$ if $j\in V_1$ and $n_j:=n$ otherwise, we obtain
\begin{multline*}
\psi_k(P_1,\dots,P_k)\leq\sum_{\tau\in \Upxi}\psi_k^{(G)}(n/r^{\lambda_1},n_2,\dots,n_k)+\sum_{\tau\in \Upxi}\sum_{j\in V_1}\psi_k^{(G\cup \{1,j\})}(n)\leq \\ \leq  O_{d,\varepsilon}\left (r_1^{\lambda_1+\varepsilon} \right )\psi_k^{(G)}(n/r^{\lambda_1},n_2,\dots,n_k)+\sum_{\tau\in \Upxi}\sum_{j\in V_1}\psi_k^{(G\cup\{1,j\})}(n).
\end{multline*}
To bound $\psi_k^{(G)}(n/r_1^{\lambda_1},n_2,\dots,n_k)$, we repeat a similar analysis for $k-1$ more rounds. 

{\bf Round $\pmb{i}$. }Assume that we want to bound $\psi_k^{(G)}(m_1,\dots,m_k)$ with $m_i\leq n$ for every $i\in[k]$, and let $R_1,\dots,R_k$ be sets such that $\psi_k^{(G)}(m_1,\dots,m_k)=\psi_k(R_1,\dots,R_k)$. 

Let $r_i$ be a parameter to be chosen later. By Theorem~\ref{1/r-cutting}, there is a subdivision $\Upxi_i$ of $S_i$ of size $O_{d,\varepsilon}(r_1^{\lambda_i+\varepsilon} )$  which is a $1/r_i$-cutting of each $\Sigma(R_j)$ with $j\in [k]\setminus \{i\},$ and in which  each cell contains at most $n/r_i^{\lambda_i}$ points of $R_i$.
Put $\ell_i := m_i/r_i^{\lambda_i}$, $\ell_j:=m_j$ for $j\in V\setminus (V_i\cup \{i\})$, and $\ell_j:=m_j/r_i$ for $j\in V_i$. Then, with an analysis similar to the one  done in the first round, we obtain
\[ \psi^{(G)}_k(m_1,\dots,m_k)\leq O_{d,\varepsilon}\left (r_i^{\lambda_i+\varepsilon}\right )\psi_k^{(G)}(\ell_1,\dots,\ell_k)+
\sum_{\tau\in\Upxi_i}\sum_{j\in V_i}\psi_k^{(G\cup\{i,j\})}(n).\]

{\bf Wrapping up. }Combining the inequalities of all $k$ rounds, we obtain
\begin{equation*}
\psi_k^{(G)}(n)\leq O_{d,\varepsilon}\left (\Pi_{i\in [k]}r_i^{\lambda_i+\varepsilon}\right )\left (\psi_k^{(G)}\left (N_1,\dots,N_k\right )
+ 
\sum_{i\in [k]}\sum_{j\in V_i}\psi_k^{(G\cup\{i,j\})}(n)\right ),
\end{equation*}
where  \[N_i:=\frac{n}{r_i^{\lambda_i}\prod_{j\in V_i}r_j}.\] 

{\bf The linear optimisation. }
The next step is to find appropriate $r_i$ of the form $r_i=r^{x_i}$ for a sufficiently large constant $r$. We want the term $\psi_k^{(G)}\left (N_1,\dots,N_k\right )$ to be at most $\psi^{(G)}_k(n/r)$,  that is, we want $x_1,\dots,x_k$ to satisfy 
\[\lambda_ix_i+\sum_{j\in V_i}x_j\geq 1 \ \ \ \ \text{for all } i\in[k].\]

This will lead to a linear optimisation problem as follows. (For two vectors $\pmb\lambda, \mathbf x$, $\langle \pmb\lambda,\mathbf x\rangle$ stands for their scalar product, i.e., $\langle \pmb\lambda,\mathbf x\rangle = \sum_{i\in[k]} \lambda_ix_i$.) \\

\begin{align}\label{total}
&\min \langle \pmb\lambda,\mathbf x\rangle \ \ \ \text{subject to: }\\
&x_i\geq 0 \ \ \ \ \ \ \ \ \ \ \ \ \ \, \ \ \    \textrm { for } 1\leq i \leq k\\
\label{cond}
&\lambda_ix_i+\sum_{j\in V_i} x_j\geq 1 \ \  \textrm{ for } 1\leq i \leq k
\end{align}

Denote by $\zeta(G,\pmb{\lambda})$ be the optimum of the linear program above. Then we can bound $\psi_k^{(G)}(n)$  as follows:
\begin{equation}\label{recursion}
\psi_k^{(G)}(n)\leq O_{d,\varepsilon}\left ( r^{\zeta(G,\pmb\lambda)+\varepsilon}\right)\left (\psi_k^{(G)}\left (\frac{n}{r}\right )
+\sum_{i\in [k]}\sum_{j\in V_i}\psi_k^{(G\cup\{i,j\})}(n)\right ).
\end{equation}

\subsection{The recurrence for the diameter case}\label{recdiam}

We will run a similar recurrence scheme as for unit simplices above, but with each time using the cutting result from Theorem \ref{diam cutting} instead of Theorem \ref{1/r-cutting}. Let us briefly explain why Theorem \ref{diam cutting} is applicable in this situation. Indeed, note that since we are working with diameter graphs, for each sphere $\mathbb{S}$ of radius $1$ around a point, every other point from $P$ is inside the ball bounded by $\mathbb{S}$. Thus, in each round we only need to apply a cutting result for the intersection of balls bounded by the unit spheres centred at the points of $P_i$, as there is no point of $P$ outside this intersection. 

We obtain
\begin{equation*}
\phi_k^{(G)}(n)\leq O_{d,\varepsilon}\left (\phi_{i\in [k]}r_i^{\left \lfloor\frac{\lambda_i+1}{2}\right \rfloor+\varepsilon}\right )\left (\phi_k^{(G)}\left (N_1,\dots,N_k\right )
+ 
\sum_{i\in [k]}\sum_{j\in V_i}\phi_k^{(G\cup\{i,j\})}(n)\right ),
\end{equation*}
where  \[N_i:=\frac{n}{r_i^{\left \lfloor\frac{\lambda_i+1}{2}\right \rfloor}\prod_{j\in V_i}r_j}.\]

For a fixed graph $G$ and a profile $\pmb{\lambda}$ let $\xi(G,\mathbf{\lambda})$ be the solution of the following linear optimisation problem.

\begin{align}\label{total2}
&\min \sum_{i\in [k]} \left \lfloor\frac{\lambda_i+1}{2}\right\rfloor x_i \ \ \ \text{subject to: }\\
& \ \ \ \ \ \ \ \ \ \ \ \ \ \ \ \ \ \ \ \ \ \ \ \ \ \ \ \ \ \ x_i\geq 0 \ \ \textrm { for } 1\leq i \leq k\\
\label{cond2}
&\left \lfloor\frac{\lambda_i+1}2\right \rfloor x_i+\sum_{j\,:\,  (i,j)\notin E} x_j\geq 1 \ \  \textrm{ for } 1\leq i \leq k
\end{align}

Then similarly as in the unit distance case, we obtain
\begin{equation}\label{recursion diam}
\phi_k^{(G)}(n)\leq O_{d,\varepsilon}\left ( r^{\xi(G,\pmb\lambda)+\varepsilon}\right)\left (\phi_k^{(G)}\left (\frac{n}{r}\right )
+\sum_{i\in [k]}\sum_{j\in V_i}\phi_k^{(G\cup\{i,j\})}(n)\right ).
\end{equation}

\section{The linear optimisation problem and the proof of Theorem~\ref{unitsimplices}}\label{sec6}
For any two vectors $\mathbf y=(y_1,\ldots, y_t), \mathbf z=(z_1,\ldots, z_t)$, we write $\mathbf y\le \mathbf z$ if $y_i\le z_i$ for each $i=1,\ldots, t$. We write $\mathbf y\lneq \mathbf z$ if, additionally, for some $i$ we have $y_i<z_i$.

We say that a graph $G=([k],E)$ is {\it realisable with profile} $\pmb{\lambda}$ in $\mathbb{R}^d$, if there is a $k$-tuple $(P_1,\dots,P_k)$ with $P_i\subset \R^d$ that has profile 
$\pmb{\lambda}$ and is compatible with $G$.

The following key observation is easy to prove. 

\begin{lemma}\label{spheres} Let $A_1,\dots,A_{\ell}\subseteq \mathbb{R}^d$ be finite sets of points such that $\|a-a'\|=1$ for any \mbox{$a\in A_i, a'\in A_j$} with  $i\ne j$. Then there exist ${\ell}$ spheres $S_1,\dots,S_{\ell}$  that  span pairwise \mbox{orthogonal} flats, satisfying  $A_i\subseteq S_i$. Moreover, if $|A_i|\geq 3$ for some $i$, then $S_i$ is of \mbox{dimension at least $1$.}
\end{lemma}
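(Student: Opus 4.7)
The plan is to exploit the unit-distance condition between different $A_i$'s to force the affine hulls to have pairwise orthogonal direction spaces, and then to realize each $A_i$ as a subset of a sphere sitting inside its own affine hull, using a point of some $A_j$ as the center.

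First I would establish the orthogonality of direction spaces. Fix $i\neq j$, take arbitrary $a,a'\in A_i$ and $b,b'\in A_j$. From $\|a-b\|^2=\|a'-b\|^2=1$ we get $\|a\|^2-\|a'\|^2=2\langle a-a',b\rangle$, and the same identity with $b$ replaced by $b'$. Subtracting the two yields $\langle a-a',b-b'\rangle=0$. Hence the direction space $V_i$ of $F_i:=\operatorname{aff}(A_i)$ is orthogonal to $V_j$ for every $j\ne i$. This is the only real computation in the proof and it handles all the geometric content.

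Next I would construct the spheres. Assume $\ell\ge 2$ (the case $\ell=1$ is trivial: any sphere through $A_1$ works). For each $i$ pick any $j\neq i$ and any $b\in A_j$; every $a\in A_i$ lies on the unit sphere $U_b:=\{x\in\mathbb{R}^d:\|x-b\|=1\}$. The intersection of a sphere with an affine flat is again a sphere (in the flat), with center the orthogonal projection of $b$ onto $F_i$, as one sees by writing $b=b'+(b-b')$ with $b'\in F_i$ and $b-b'\perp V_i$. Set $S_i:=U_b\cap F_i$; it lies inside $F_i$ and contains $A_i$. Since $\operatorname{aff}(S_i)\subseteq F_i$, the direction spaces of $\operatorname{aff}(S_1),\dots,\operatorname{aff}(S_\ell)$ are contained in $V_1,\dots,V_\ell$ respectively, hence pairwise orthogonal.

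For the ``moreover'' part, suppose $|A_i|\ge 3$. Since a line meets a sphere in at most two points, three distinct points of $A_i$ lying on $U_b$ cannot be collinear, so $\dim F_i\ge 2$. Moreover $A_i\subseteq S_i$ contains two distinct points, so $S_i$ has positive radius, and a positive-radius sphere inside a flat of dimension $\ge 2$ has dimension $\ge 1$. This gives $\dim S_i\ge 1$, as required. The main ``obstacle'' is really only bookkeeping around degenerate cases ($|A_i|\le 2$, or $\ell=1$), where the natural choice of $S_i$ may collapse to a point or a 0-sphere; but the statement of the lemma only demands a positive-dimensional sphere when $|A_i|\ge 3$, and the argument above covers exactly this case.
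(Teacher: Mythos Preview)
Your argument is correct and is precisely the natural proof the paper has in mind when it calls the lemma ``easy to prove'' (the paper gives no proof of its own). The only quibble is your throwaway remark that for $\ell=1$ ``any sphere through $A_1$ works'': with no cross-distance constraint the points of $A_1$ could be collinear and then no sphere contains them, but the lemma is only ever applied with $\ell\ge 2$, which your main argument handles cleanly.
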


Applying Lemma~\ref{spheres} with ${\ell}=2$, it follows that if $G$ is realisable with profile  $\pmb{\lambda}$ in $\mathbb{R}^d$, then there are flats $U_1,\dots,U_k$ in $\mathbb{R}^d$ with $\dim U_i=\min\{\lambda_i+1,d\}$ such that $(i,j)\in E$ implies $U_i$ is orthogonal to $U_j$. We  immediately derive the following condition on the weights $\lambda_i$, which was also pointed out in \cite{AS}.

\begin{lemma}\label{lem clique condition}
Let $K=G[V']$ be a clique of size $\ell$ in $G$. Then
\begin{equation}\label{clique condition}
\sum_{i\in V'}\lambda_i\leq d-\ell.
\end{equation}
\end{lemma}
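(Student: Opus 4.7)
The plan is to apply Lemma~\ref{spheres} to the subfamily of point sets indexed by the clique $V'$ and then extract the inequality by a direct dimension count. Since $G$ is realisable with profile $\pmb{\lambda}$ in $\R^d$, fix a compatible tuple $(P_1,\dots,P_k)$ witnessing this. By compatibility and the fact that $V'$ induces a clique in $G$, we have $\|p-p'\| = 1$ for every pair $p \in P_i$, $p' \in P_j$ with distinct $i,j \in V'$. Lemma~\ref{spheres} then supplies $\ell = |V'|$ spheres $S'_i \supseteq P_i$ (for $i \in V'$) whose affine hulls $F_i$ are pairwise orthogonal flats in $\R^d$.

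To convert this into the desired bound, first I would verify that $\dim F_i \ge \lambda_i + 1$ for every $i \in V'$. By the definition of $\lambda_i$ as the minimum dimension of a sphere containing $P_i$ (or by the convention $\lambda_i = 0$ when $|P_i| \le 3$), we have $\dim S'_i \ge \lambda_i$; since a sphere of dimension $\lambda$ spans an affine flat of dimension exactly $\lambda + 1$, this gives the claim. Second, pairwise orthogonal affine flats have pairwise orthogonal direction subspaces in $\R^d$, so the direct sum of these direction subspaces embeds in $\R^d$ and their dimensions sum to at most $d$. Combining the two observations yields
\[
\sum_{i \in V'}(\lambda_i + 1) \;\le\; \sum_{i \in V'}\dim F_i \;\le\; d,
\]
which rearranges to the claimed inequality $\sum_{i \in V'}\lambda_i \le d - \ell$.

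There is no serious obstacle: Lemma~\ref{spheres} does all the geometric work, and the remainder is elementary linear algebra about orthogonal subspaces. The only step that warrants a brief check is the boundary convention $\lambda_i = 0$ for $|P_i|\le 3$, where the ``true'' minimum sphere dimension containing $P_i$ may exceed $\lambda_i$; but this only makes the inequality $\dim S'_i \ge \lambda_i$ hold trivially, so the argument above applies uniformly.
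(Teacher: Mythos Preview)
Your proof is correct and follows essentially the same approach as the paper: both invoke Lemma~\ref{spheres} to obtain pairwise orthogonal flats of dimension at least $\lambda_i+1$ containing the sets $P_i$, and then bound the sum of dimensions by $d$. The only cosmetic difference is that the paper applies Lemma~\ref{spheres} with $\ell=2$ to each edge to build a single system of flats $U_1,\dots,U_k$, whereas you apply it directly with $\ell=|V'|$ to the clique; for the purpose of this lemma the two are equivalent.
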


The following lemma shows that, when solving the linear optimization problem, we can potentially drop the assumption that $G$ is realisable with profile $\pmb{\lambda}$ and replace it by another profile $\pmb\lambda'$ with  $\pmb\lambda'\ge \pmb\lambda$.

\begin{lemma}\label{increasing lambda} Let $\pmb{\lambda},\pmb{\lambda'}\in \mathbb{N}^k$ be two profiles  such that $\pmb\lambda\leq \pmb\lambda'$. Then $\zeta(G,\pmb{\lambda})\leq \zeta(G,\pmb{\lambda'})$.
\end{lemma}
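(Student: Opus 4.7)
The plan is a direct primal scaling: I will convert an optimal solution of the LP for $\pmb\lambda'$ into a feasible solution of the LP for $\pmb\lambda$ of the same objective value. Concretely, let $\mathbf{x}^* = (x_1^*, \dots, x_k^*)$ be an optimal solution to the LP \eqref{total}--\eqref{cond} for the profile $\pmb{\lambda}'$, so that $\sum_i \lambda_i' x_i^* = \zeta(G, \pmb{\lambda}')$. Assuming for the moment that $\lambda_i \geq 1$ for every $i$ (the degenerate case is handled at the end), I would define the candidate solution $\tilde{\mathbf x}$ for the $\pmb\lambda$-LP by the coordinate-wise rescaling $\tilde{x}_i := (\lambda_i'/\lambda_i)\, x_i^*$.

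The key point is that this particular rescaling is exactly calibrated so that each summand of the objective is preserved: $\lambda_i \tilde{x}_i = \lambda_i' x_i^*$. Finding this scaling factor is the main (and essentially only) conceptual step; everything else is a direct computation. For feasibility of $\tilde{\mathbf x}$ in the $\pmb\lambda$-LP, substitute into the $i$-th constraint \eqref{cond} and use $\lambda_j'/\lambda_j \geq 1$ together with the feasibility of $\mathbf x^*$ in the $\pmb\lambda'$-LP:
\[
\lambda_i \tilde{x}_i + \sum_{j \in V_i} \tilde{x}_j \;=\; \lambda_i' x_i^* + \sum_{j \in V_i} \frac{\lambda_j'}{\lambda_j}\, x_j^* \;\geq\; \lambda_i' x_i^* + \sum_{j \in V_i} x_j^* \;\geq\; 1,
\]
and non-negativity of $\tilde{\mathbf x}$ is immediate. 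The objective value then telescopes:
\[
\langle \pmb\lambda, \tilde{\mathbf x}\rangle \;=\; \sum_i \lambda_i \cdot \frac{\lambda_i'}{\lambda_i}\, x_i^* \;=\; \sum_i \lambda_i' x_i^* \;=\; \zeta(G, \pmb\lambda'),
\]
which combined with feasibility of $\tilde{\mathbf x}$ yields $\zeta(G, \pmb\lambda) \leq \zeta(G, \pmb\lambda')$, as required.

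The one remaining issue, and the only place where any care is needed, is the degenerate case $\lambda_i = 0$, in which the scaling factor $\lambda_i'/\lambda_i$ is undefined when $\lambda_i' > 0$. By the convention in the definition of a profile, however, $\lambda_i = 0$ forces $|P_i| \leq 3$, and the trivial bound $\psi_k(P_1,\dots,P_k) \leq |P_i|\prod_{j\ne i}|P_j| \leq 3 n^{k-1}$ already controls such a contribution without passing through the LP at all. Consequently, for the purposes of computing $\zeta(G,\pmb\lambda)$ one may freely assume $\lambda_i \geq 1$ for every $i$, and the scaling argument above suffices.
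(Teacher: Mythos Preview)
Your argument is correct and is essentially identical to the paper's proof: both take an optimal $\mathbf{x}'$ for the $\pmb\lambda'$-LP and rescale coordinatewise by $\lambda_i'/\lambda_i$ to obtain a feasible point for the $\pmb\lambda$-LP with the same objective value. You have added an explicit verification of the constraints and a discussion of the degenerate case $\lambda_i=0$, which the paper omits; otherwise the two proofs coincide.
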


\begin{proof} Let $\mathbf{x'}\geq 0$ be an optimal solution for the linear program \eqref{total}--\eqref{cond} defined by $G$ and $\pmb{\lambda'}$.
By setting $x_i=\frac{\lambda_i'}{\lambda_i}x_i'$ for each $i\in [k]$, we obtain a solution $\mathbf{x}$ for the linear program defined by $G$ and $\pmb{\lambda}$ such that $\langle \pmb{\lambda}, \mathbf {x}\rangle=\langle \pmb \lambda' ,\mathbf{x'}\rangle$.
\end{proof}

The next lemma uses induction and allows us to restrict to the case when the profile $\pmb\lambda$ satisfies $\pmb\lambda\ge (2,\ldots,2)$. 
(It can be viewed as a strengthening of Lemma 6.3 from \cite{AS}.)

\begin{lemma}\label{at least 2} Let $(\alpha_{(i,j)})_{1\leq i \leq j \leq d }$ be monotone increasing in both $i$ and $j$ and such that in addition $\alpha_{i,j}+1\leq \alpha_{i+1,j+2}$ for every $i,j\in [d]$. If $G=([k],E)$ is realisable in $\mathbb{R}^d$ with profile $\pmb{\lambda}$ and there is an $i$ for which $\lambda_i\le 1$, then the following holds. If $\psi_{k',d'}(n)=O_{d'} \left ( n^{\alpha_{k',d'}} \right )$ is true for any $(k',d')\lneq (k,d)$, then we have $\psi_{k,d}^{(G)}(n)=O_d\left (n^{\alpha_{k,d}}\right )$.
\end{lemma}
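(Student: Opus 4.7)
The plan is a case analysis on $\lambda_i\in\{0,1\}$, reducing in both cases to a problem on unit $(k-1)$-simplices in $\mathbb R^d$ that is controlled by the inductive hypothesis, together with the elementary geometric fact that a circle meets a sphere in at most two points (generically).

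The case $\lambda_i=0$ is immediate: by the convention built into the definition of the profile, $\lambda_i=0$ forces $|P_i|\le 3$, and fixing any $p\in P_i$, the remaining $k-1$ vertices of each unit $k$-simplex containing $p$ form a unit $(k-1)$-simplex (which incidentally lies on $\sigma_p$, but I would only use this as an upper bound). Since $(k-1,d)\lneq(k,d)$, the inductive hypothesis gives $\psi_{k-1,d}(n)=O(n^{\alpha_{k-1,d}})$, and after summing over the at most three choices of $p$ and invoking monotonicity of $\alpha$ in its first coordinate, I would obtain $\psi_{k,d}^{(G)}(n)\le 3\psi_{k-1,d}(n)=O(n^{\alpha_{k,d}})$.

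The substantive case is $\lambda_i=1$, where $P_i$ lies on a circle $C_i$ of some radius $r$ in a $2$-flat $\Pi\subseteq\mathbb R^d$. The geometric lemma I would use is that for a point $q\in\mathbb R^d$, the intersection $C_i\cap\sigma_q$ has size at most $2$, except when $q$ lies on the axis of $C_i$ at distance $\sqrt{1-r^2}$ from its centre; call this exceptional set $L$, and note $|L|\le 2$. I would then enumerate each unit $k$-simplex by first choosing its $(k-1)$-tuple of non-$i$ vertices $(p_j)_{j\ne i}$ (which must themselves span a unit $(k-1)$-simplex) and then extending by a vertex $p_i\in P_i$, splitting the tuples into \emph{generic} ones (some $p_j\notin L$) and \emph{degenerate} ones (all $p_j\in L$). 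Generic tuples admit at most two extensions each and number at most $\psi_{k-1,d}(n)=O(n^{\alpha_{k-1,d}})$ by the inductive hypothesis, contributing $O(n^{\alpha_{k-1,d}})$ simplices; degenerate tuples have each coordinate restricted to $L$, so there are only $2^{k-1}=O_k(1)$ of them, each extending in at most $|P_i|\le n$ ways and contributing $O(n)$. Summing and using monotonicity, together with the inequality $\alpha_{i,j}+1\le\alpha_{i+1,j+2}$ (which, combined with monotonicity, forces $\alpha_{k,d}\ge 1$ in the relevant range), one concludes $\psi_{k,d}^{(G)}(n)=O(n^{\alpha_{k,d}})$. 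The main obstacle is precisely the degenerate subcase: a naive bound such as $|P_i|\cdot\psi_{k-1,d}(n)$ would be too weak, and it is exactly the one-dimensionality of $C_i$ that lets one bound the exceptional axial set $L$ by an absolute constant and thereby keep the degenerate contribution linear in $n$; this is why the hypothesis $\lambda_i\le 1$ is essential and why the argument would break for $\lambda_i\ge 2$.
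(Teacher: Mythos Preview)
Your treatment of the generic case is correct and matches the paper's approach: once some $p_j$ with $j\ne i$ has $|C_i\cap\sigma_{p_j}|\le 2$, the vertex $p_i$ is pinned down to at most two candidates, and the number of such $(k-1)$-tuples is bounded by $\psi_{k-1,d}(n)=O(n^{\alpha_{k-1,d}})$.

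The degenerate case, however, has a genuine gap. Your claim that $|L|\le 2$ is only correct in $\mathbb{R}^3$, where the ``axis'' of a circle is a line. In $\mathbb{R}^d$ with $d\ge 4$ --- which is precisely the range of interest --- the set of points equidistant (at distance $1$) from every point of $C_i$ is a $(d-3)$-dimensional sphere sitting inside the $(d-2)$-dimensional affine subspace orthogonal to $\Pi$ through the centre of $C_i$. Thus $L$ is infinite, and each $P_j\cap L$ may contain up to $n$ points, not two. Your bound of $2^{k-1}$ for the number of degenerate tuples therefore fails, and with it the $O(n)$ bound on the degenerate contribution.

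The paper fixes this by exploiting the drop in ambient dimension: since all degenerate $p_j$ lie in a $(d-2)$-flat, the unit $(k-1)$-simplices they span are counted by $\psi_{k-1,d-2}(n)=O(n^{\alpha_{k-1,d-2}})$, and then each such simplex is extended by an arbitrary $p_i\in P_i$, giving a total of $n\cdot O(n^{\alpha_{k-1,d-2}})=O(n^{\alpha_{k-1,d-2}+1})\le O(n^{\alpha_{k,d}})$. This last step is exactly where the hypothesis $\alpha_{i,j}+1\le\alpha_{i+1,j+2}$ is genuinely needed --- not merely to force $\alpha_{k,d}\ge 1$ as you suggest, but to absorb the extra factor of $n$ coming from the free choice of $p_i$.
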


\begin{proof} Without loss of generality, we may assume that $\lambda_1\le 1$. If $\lambda_1 = 0$ then $n\le 2$, and the statement is trivially true. In what follows, we assume that $\lambda_1 = 1.$ 
Fix a $k$-tuple ($P_1,\dots,P_k$) in $\mathbb{R}^d$ of point sets of size $n$ of profile $\pmb{\lambda}$
that is compatible with $G$. We are going to show that $\psi_{k}(P_1,\dots,P_k)=O_d\left (n^{\alpha_d}\right )$. Recall that  we denote by $U_1$ the $2$-flat that contains the circle on which all points from $P_1$ lie.

Let $X\subseteq \bigcup_{i=2}^k P_i$ be the set of those points in $\bigcup_{i=2}^k P_i$ which are at distance $1$  from at most two points of $P_1$, and $Y=\bigcup_{i=2}^k P_i\setminus X$. We denote by $\Psi'_{k}(P_1,\dots,P_k)$ the set of those $k$-vertex simplices that have at least one vertex in $X$ and by $\Psi''_{k}(P_1,\dots,P_k)$ the set of those $k$-vertex simplices that have no vertices in $X$. Then $|\Psi'_{k}(P_1,\dots,P_k)|\leq 2 \psi_{k-1,d}(P_2,\dots,P_k)=O_{d}\left (n^{\alpha_{d}}\right )$. Further, the set $Y$ is contained in the orthogonal complement of the plane $U_1$, thus we have $|\Psi_{k}''(P_1,\dots,P_k)|\leq n \psi_{k-1,d-2}(n)=nO_{d-2}\left (n^{\alpha_{d-2}}\right ) = O_d\left (n^{\alpha_d}\right )$.
Overall, we obtain \[\psi_k(P_1,\dots,P_k)=|\Psi_k'(P_1,\dots,P_k)|+|\Psi''_k(P_1,\dots,P_k)|=O_d\left(n^{\alpha_d}\right ).\qedhere\]
\end{proof}

Finally, the following theorem is the heart of our proof and allows us to  make progress on the problem of bounding $\zeta(k,d)$ in general. 

\begin{theorem}\label{thm main weight} If $G=([k],E)$ is realisable in $\mathbb{R}^d$ with profile $\pmb{\lambda}$ and $\pmb\lambda\ge (2,\ldots, 2)$, then $\zeta(G,\pmb{\lambda})\leq \frac{5}{8}d+\frac{1}{8}k$.
\end{theorem}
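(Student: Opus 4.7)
The plan is to construct an explicit feasible solution $\mathbf x$ to the LP \eqref{total}--\eqref{cond} with $\langle\pmb\lambda,\mathbf x\rangle\le(5d+k)/8$, using the clique constraint of Lemma~\ref{lem clique condition}.

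I start by recording two easy feasible solutions. Since $\pmb\lambda\ge(2,\dots,2)$, the constant choice $x_i=1/2$ already satisfies $\lambda_i x_i\ge 1$, so it is feasible with objective $\tfrac12\sum_i\lambda_i$. The choice $x_i=1/\lambda_i$ is trivially feasible with objective $k$. In fact, for any $\alpha,\beta\ge 0$ with $2\alpha+\beta\ge 1$ the vector $x_i=\alpha+\beta/\lambda_i$ is feasible, because
\[
\lambda_i x_i+\sum_{j\in V_i} x_j=\alpha(\lambda_i+|V_i|)+\beta\Bigl(1+\sum_{j\in V_i}\tfrac{1}{\lambda_j}\Bigr)\ge 2\alpha+\beta.
\]
Optimising this one-parameter family yields $\zeta(G,\pmb\lambda)\le\min\{\tfrac12\sum_i\lambda_i,\,k\}$, which already implies the theorem whenever $k\le 5d/7$ or $\sum_i\lambda_i\le(5d+k)/4$.

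Thus I may assume the \emph{hard case} $k>5d/7$ and $\sum_i\lambda_i>(5d+k)/4$. Here the clique condition must be used in a sharper way. From Lemma~\ref{lem clique condition} and $\pmb\lambda\ge(2,\dots,2)$ we get $3|K|\le\sum_{i\in K}(\lambda_i+1)\le d$ for every clique $K$, so $\omega(G)\le d/3$. Moreover, a vertex $i$ with $\lambda_i>d-4$ must be isolated in $G$: any neighbour $j$ would give the clique $\{i,j\}$, forcing $\lambda_j\le d-\lambda_i-2<2$, which contradicts $\pmb\lambda\ge(2,\dots,2)$. More generally, the $\lambda$-weight of a vertex caps its degree in $G$. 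I plan to split $[k]=H\sqcup L$ into \emph{heavy} vertices (those with large $\lambda_i$, hence nearly isolated in $G$) and \emph{light} vertices ($\lambda_i=2$, on which the bulk of the edge set lives), and to set $x_i=\gamma/\lambda_i$ for $i\in H$ and $x_i=\delta$ for $i\in L$, tuning $\gamma,\delta$ so that both types of constraints hold.

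The main obstacle is that in the hard case one must necessarily take $x_i<\min(1/2,\,1/\lambda_i)$ for many $i$, which is feasible only if the neighbourhood sum $\sum_{j\in V_i} x_j$ genuinely compensates. To get the precise coefficients $5/8$ on $d$ and $1/8$ on $k$, I anticipate passing to LP duality and combining a weighted sum of the clique constraints \eqref{clique condition} with the dual constraints of \eqref{total}--\eqref{cond}: chosen so that the contribution from the $(5d)/8$ term comes out of the bound $\sum_{i\in K}(\lambda_i+1)\le d$ applied along a cover of $G$ by cliques, while the $k/8$ term arises from the $|V|=k$ count. The delicate step will be the bookkeeping that keeps track of how heavy and light vertices interact across edges and non-edges of $G$, and I expect this is where the full proof has to do case analysis on the degree sequence of $G$ and on the profile $\pmb\lambda$.
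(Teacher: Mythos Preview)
Your proposal is not a proof: the entire content of the theorem lies in what you call the ``hard case'', and there you only say that you \emph{anticipate} passing to LP duality and \emph{expect} a case analysis on heavy and light vertices. Nothing is carried out. The preliminary observations you make (the family $x_i=\alpha+\beta/\lambda_i$ and the resulting bound $\min\{\tfrac12\sum\lambda_i,\,k\}$) are correct but weak: for instance, when $G$ is the empty graph on $k=d$ vertices with all $\lambda_i$ large, both $\tfrac12\sum\lambda_i$ and $k$ exceed $(5d+k)/8=\tfrac34 d$, so your easy bounds say nothing and your proposed dichotomy ``heavy $\leftrightarrow$ nearly isolated / light $\leftrightarrow$ $\lambda_i=2$'' does not even get started, since there are no edges and no vertices with $\lambda_i=2$.

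The paper's argument is quite different from your outline and is a single explicit primal construction with no duality and no case analysis. One picks a maximum clique $K$ in $G$ with maximal profile sum $\sum_{i\in K}\lambda_i$. For each $i\in K$, let $L_i$ be the set of vertices outside $\{i\}$ adjacent to every vertex of $K\setminus\{i\}$; by maximality of $K$, each $L_i$ is independent and its vertices have $\lambda_s\le\lambda_i$. Set $x_s=\tfrac12$ on $K$, $x_s=0$ outside $K\cup\bigcup_iL_i$, and on $L_i$ a suitable common value so that the constraints for $s\in L_i$ hold with equality. Feasibility for $s\in K$ comes from $\lambda_s\ge 2$, and for $s\notin K\cup\bigcup_iL_i$ from the fact that such $s$ misses at least two vertices of $K$. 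The objective is then bounded using the clique condition $\sum_{i\in K}\lambda_i\le d-|K|$, the fact that $\sum_{i\in K}\ell_i\le k$, and the AM--GM inequality applied to each term $\lambda_i\ell_i/(\lambda_i+\ell_i-1)$, yielding
\[
\langle\pmb\lambda,\mathbf x\rangle\le\frac{d-|K|}{2}+\frac12\sum_{i\in K}\frac{\lambda_i\ell_i}{\lambda_i+\ell_i-1}\le\frac d2+\frac18\sum_{i\in K}(\lambda_i+\ell_i)\le\frac58 d+\frac18 k.
\]
Note that the clique $K$, not a partition of $[k]$ by weight, is the organising structure; your heavy/light split and the appeal to duality are heading in a different (and, as presented, undeveloped) direction.
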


\begin{proof} Let $K\subset [k]$ be the largest set of vertices of $G$ that form a clique and that has maximum profile sum, i.e., such that $\sum_{i\in K} \lambda_i$ is maximal. For each $i\in K$, define \[L_i=\setbuilder{v\in V\setminus \{i\}}
{(v,j)\in E \textrm{ for every } j\in K\setminus \{i\}},\] and put $\ell_i=|L_i|$. We define $\mathbf{x}$ for the LP \eqref{total}--\eqref{cond} as follows: set $x_s=\frac{1}{2}$ for $s\in K$, $x_s=0$ for $s\notin K\cup\bigcup_{i\in K} L_i$, and  $x_s=\frac{\lambda_s}{2\lambda_i}\frac{1}{\lambda_i+\ell_i-1}$ for $s\in L_i$. It is clear that all $x_s$ are non-negative.

Let us first show that $\mathbf{x}$ satisfies \eqref{cond}. 
Since $\lambda_i\geq 2$ for all $i\in[k]$, the condition $\eqref{cond}$ holds for $s\in K$. Second, assume that $s\notin K\cup\bigcup_{i\in K} L_i$. Then $s$ is not connected to at least $2$ vertices of $K$, 
and thus  \eqref{cond} holds for such $s$.  Third, due to the fact that $K$ is a clique of maximal size, $L_i$ is an independent set for every $i\in K$. Moreover, since $K$ is of maximal profile sum among cliques of maximal size,  we have $\lambda_s\leq\lambda_i$ for $s\in L_i$. By Lemma~\ref{increasing lambda}, we may without loss of generality assume that 
$\lambda_s=\lambda_i$. In this case, we have
\[x_s=\frac{1}{2}\frac{1}{\lambda_i+\ell_i-1}\ \ \ \  \text{ for }s\in L_i.\]
 With this in mind, for $s\in L_i$ we have \[\lambda_sx_s+\sum_{l\in V_i}x_l\geq x_i+\lambda_sx_s+\sum_{l\in L_i\setminus \{s\}}x_l=\frac{1}{2}+\frac{1}{2}\frac{\lambda_i+\ell_i-1}{\lambda_i+\ell_i-1}=1.\]

Finally, let us show that $\langle\pmb{\lambda},\mathbf{x}\rangle \leq  \frac{5}{8}d+\frac{1}{8}k$. Indeed, using \eqref{clique condition} and the and the AM-GM inequality, we obtain

\begin{align*}\sum_{s\in [k]} \lambda_sx_s\leq& \frac{d-|K|}{2}+\sum_{i\in K}\frac{1}{2}\frac{\lambda_i\ell_i}{\lambda_i+\ell_i-1}\leq \frac{d-|K|}{2}+\frac{1}{2}\sum_{i\in K}\frac{\frac 14(\lambda_i+\ell_i)^2}{\lambda_i+\ell_i-1}\\ \leq& \frac{d}{2}+\frac{1}{8}\sum_{i\in K}\left(\frac{(\lambda_i+\ell_i)^2}{\lambda_i+\ell_i-1}-4\right)
\le \frac{d}{2}+\frac{1}{8}\sum_{i\in K}(\lambda_i+\ell_i)\\
\leq& \frac{d}{2}+\frac{1}{8}(d-|K|+k)\leq \frac{5}{8}d+\frac{1}{8}k. \qedhere
\end{align*}
\end{proof}

\subsection{Proof of Theorem~\ref{unitsimplices}}

\begin{proof}[Proof of Theorem \ref{unitsimplices}]
We will prove that 
\begin{equation}\label{G}
\psi_{k,d}^{(G)}(n) = O_{d,\varepsilon}\big(n^{\frac 58d+\frac 18k+\varepsilon}\big)
\end{equation}
holds for every $\varepsilon$ and every graph on $k$ vertices. Since $\psi_{k,d}(n)\leq \max_{G}\psi_{k,d}^{(G)}(n)$, this implies the statement.

We will prove \eqref{G} by induction on $k,d$ (we refer to it as `outer induction'). The base case is $d=2$, in which the statement trivially holds. Our induction assumption is that the statement holds for any $(k',d')$ with $(k',d')\lneq (k,d)$, any graph $G'$ on $k'$ vertices and for any $\varepsilon>0$. From this assumption we prove that for each graph $G$ on $k$ vertices and any $\varepsilon>0$ we have $\psi_{k,d}^{(G)}(n) = O_{d,\varepsilon}\big(n^{\frac 58d+\frac 18k+\varepsilon}\big)$. For fixed $k,d$ we use `inner' induction over the number of edges in the complement of $G$.

In the base case, we have $\psi_k^{(K_k)}(n)=O_d\left (n^{\lfloor\frac{d}{2}\rfloor}\right )$: indeed, if $P_1,\dots,P_k$ defines the complete graph $K_k$, then we can have $|P_i|\geq 3$ for at most $d/2$ indices $i$ (cf. Lemmas~\ref{spheres} and~\ref{lem clique condition} above). We now prove our statement for a fixed graph $G$, assuming that
\[\psi_{k,d}^{(G')}(n) = O_{d,\varepsilon,G'}\big(n^{\frac 58d+\frac 18k+\varepsilon}\big).\]
holds for any $G'$ with more edges than $G$.

Take any profile $\pmb \lambda$ and $k$-tuple $(P_1,\ldots, P_k)$ of point sets of size $n$ in $\mathbb{R}^d$ with profile $\pmb{\lambda}$ such that $G$ is compatible with $P_1,\ldots, P_k$. If for some $i$ we have $\lambda_i\le 1$ then the outer induction hypothesis (on $k$ and $d$) and Lemma~\ref{at least 2} with $\alpha_{k',d'}=\frac{5}{8}d'+\frac{1}{8}k'+\varepsilon$  implies that $\psi(P_1,\ldots, P_k) = O_{d,\varepsilon}\big(n^{\frac 58d+\frac 18k+\varepsilon}\big).$ Therefore, in what follows we assume that $\pmb\lambda\ge (2,\ldots, 2)$.

Using the notation from Section \ref{rec},  \eqref{recursion} combined with Theorem~\ref{thm main weight} imply
\begin{equation}\label{bigform1}
\psi_k^{(G)}(n)\leq O_{d,\varepsilon}\left ( r^{\frac 58 d+\frac 18k+\varepsilon}\right)\left (\psi_k^{(G)}\left (\frac{n}{r}\right )
+\sum_{i\in [k]}\sum_{j\in V_i}\psi_k^{(G\cup\{i,j\})}(n)\right ).
\end{equation}

By the inner induction, for each $G'=G\cup \{i,j\}$ and for any $\varepsilon_1>0$ we have $\psi_k^{(G')}(n)=O_{d,\varepsilon_1,G'}\big(n^{\frac 58d+\frac 18k+\varepsilon_1}\big)$. This, together with \eqref{bigform1} implies that there is a constant $C=C(d,\varepsilon_1)$ such that
\begin{equation*}
\psi_k^{(G)}(n)\leq O_{d,\varepsilon}\left ( r^{\frac 58 d+\frac 18k+\varepsilon}\right)\left (\psi_k^{(G)}\left (\frac{n}{r}\right )
+C n^{\frac 58d+\frac 18k+\varepsilon_1}\right ).
\end{equation*}

If $r=r(\varepsilon,d)$ is sufficiently large, and $\varepsilon_1$ is sufficiently small, this implies that
\begin{equation*}\label{bigform2}
\psi_k^{(G)}(n)\leq  r^{\frac 58 d+\frac 18k+2\varepsilon}\psi_k^{(G)}\left (\frac{n}{r}\right ) 
\end{equation*}

If $n$ has the form $n = r^{\ell}$ for some integer $\ell$ then the previous inequality, applied $\ell$ times, implies $\psi_k^{(G)} (n)\le r^{\ell({\frac 58 d+\frac 18k+2\varepsilon})}\psi_k^{(G)} (1) =r^{\ell({\frac 58 d+\frac 18k+2\varepsilon})}$, Therefore for any sufficiently large $n$ we have $\psi_k^{(G)} (n)\le n^{{\frac 58 d+\frac 18k+3\varepsilon}}$. Since $\varepsilon>0$ was arbitrary, we may replace  $3\varepsilon$ with $\varepsilon$ in the last inequality. This finishes the proof.
\end{proof}

\section{Proof sketch of Theorem~\ref{diam simplices}}\label{sec8}

We prove a statement similar to Theorem \ref{thm main weight}, but tailored for the diameter case.

\begin{theorem}\label{thm diam weight} If $G=([k],E)$ is realisable in $\mathbb{R}^d$ with profile $\pmb{\lambda}$ and $\pmb\lambda\ge (2,\ldots, 2)$, then \mbox{$\xi(G,\pmb{\lambda})\leq \frac{d}{2}$.}
\end{theorem}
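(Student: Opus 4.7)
The plan is to follow the same template as the proof of Theorem~\ref{thm main weight}, but with a drastically simpler feasible solution for the LP~\eqref{total2}--\eqref{cond2}. Writing $\mu_i := \lfloor (\lambda_i+1)/2\rfloor$, the key observation I would exploit is that under the hypothesis $\pmb\lambda \geq (2,\dots,2)$ one already has $\mu_i \geq 1$ for every $i$, so the coefficient in front of $x_i$ on the left-hand side of the $i$-th constraint meets the right-hand side by itself. This is the essential difference from the unit-distance case, in which the diagonal contribution $\lambda_i x_i$ with $x_i = 1/2$ did not suffice and forced the argument in Theorem~\ref{thm main weight} to place fractional mass on the sets $L_i$.

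Accordingly, my proposed feasible solution is the indicator vector of a maximum clique. Let $K\subseteq [k]$ be a clique of maximum size in $G$, and set $x_i = 1$ for $i \in K$ and $x_i = 0$ otherwise. Feasibility will split into two cases. For $i \in K$ the constraint holds simply because $\mu_i x_i = \mu_i \geq 1$. For $v \notin K$, the maximality of $K$ will force some $j \in K$ with $(v,j)\notin E$; this $j$ lies in $V_v$ and contributes $x_j = 1$, so the $v$-th constraint is satisfied as well.

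It remains to bound the objective. I would apply Lemma~\ref{lem clique condition} to the clique $K$ and use the inequality $\mu_i \leq (\lambda_i+1)/2$ to obtain
\begin{equation*}
\sum_{i\in[k]} \mu_i x_i = \sum_{i\in K}\mu_i \leq \frac{1}{2}\Bigl(\sum_{i\in K}\lambda_i + |K|\Bigr) \leq \frac{1}{2}\bigl((d-|K|) + |K|\bigr) = \frac{d}{2},
\end{equation*}
which yields $\xi(G,\pmb\lambda) \leq d/2$, as claimed.

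There is no real obstacle in this plan. The only conceptual point is recognising that the diameter LP is much looser than its unit-distance counterpart: the diagonal coefficient $\mu_i$ is already $\geq 1$, so a pure $0/1$ assignment supported on a maximum clique suffices and no fractional analysis over the independent neighbourhoods $L_i$ is required. Feeding the bound $\xi \leq d/2$ into the recurrence~\eqref{recursion diam} and running the same double induction as in the proof of Theorem~\ref{unitsimplices} is then what will ultimately deliver Theorem~\ref{diam simplices}.
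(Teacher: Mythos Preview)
Your proposal is correct and is essentially the paper's own proof: take the indicator of a maximal clique $K$ (you wrote ``maximum'', which is fine since maximum implies maximal), verify feasibility via $\mu_i\ge 1$ on $K$ and via a missing edge to $K$ off $K$, and bound the objective by $\sum_{i\in K}(\lambda_i+1)/2\le d/2$ using Lemma~\ref{lem clique condition}.
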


\begin{proof}
Take a maximal clique $K$ in $G$. We define $\mathbf{x}$ as follows. Set $x_i=1$ if $i\in K$ and $x_i=0$ otherwise. With this choice of $\mathbf{x}$,  inequality~\eqref{cond2} holds for every $i\in [k]$. Indeed, it holds trivially if $i\in K$. Otherwise, by maximality, there is at least one vertex of $K$ to which $i$ is not connected, thus $\eqref{cond2}$ holds in this case as well by the assumption $\pmb{\lambda}\geq (2,\dots,2)$.

We also have $\sum_{i\in [k]} \left \lfloor\frac{(\lambda_i+1)}{2}\right \rfloor x_i\leq \sum_{i\in [k]} \frac{(\lambda_i+1)}{2}x_i=\sum_{i\in K} \frac{(\lambda_i+1)}{2}\leq \frac{d}{2}$, where the last inequality holds by~\eqref{clique condition}.
\end{proof}

Now the proof of Theorem \ref{diam simplices} follows the same lines as the proof of Theorem \ref{unitsimplices}, with a few modifications.

\begin{itemize}

\item First, to make the assumption that $\pmb{\lambda}\geq (2,\dots,2)$ we use Theorem \ref{at least 2} with $\alpha_{k',d'}=\frac{d'}{2}+\varepsilon$ instead of $\alpha_{k',d'}=\frac{5}{8}k'+\frac{1}{8}d'+\varepsilon$. (Note that since $\phi_{k,d}(n) \leq \psi_{k,d}(n)$, Lemma \ref{at least 2} is applicable to make the assumption.)

\item Second, instead of $\psi_{k,d}^{(G)}(n)$ we have to bound $\phi_{k,d}^{(G)}(n)$, thus instead of \eqref{recursion}  we use \eqref{recursion diam}.

\item Finally, instead of Theorem~\ref{thm main weight} we use Theorem~\ref{thm diam weight}.

\end{itemize}

After these modifications,  induction and upper bounding steps are the same as in \mbox{Theorem \ref{unitsimplices}.}

\section{Proof of Theorem \ref{nontrivi}}\label{sec10}

First, we prove the only if direction. The construction is very similar to the Lenz configuration. Assume that $G=([k],E)$ is an orthogonality graph. Then there exist $2$-dimensional linear subspaces $V_1,\dots,V_k$ such that if there is an edge between $i$ and $j$ then $V_1$ and $V_2$ are orthogonal. For each $i$ let $S_i$ be a circle of radius $1/\sqrt{2}$ in $S_i$. Then placing $\lfloor n/k \rfloor$ points on each $S_i$, we obtain a set of $n$ points with $\Omega_{\Delta}(n^k)$ copies of $\Delta$.

We now turn to the other direction. Assume that $G=([k], E)$ is not an orthogonality graph. The proof is very similar to the proof of Theorem \ref{unitsimplices}, thus, we only sketch the proof, highlighting the necessary modifications.

We again switch to the $k$-partite version. Using analogous notation as before, we denote by $\psi_{\Delta,d}(P_1,\dots,P_k)$ the number of $\Delta$-copies with the $i$-th vertex in $P_i$.
For any subgraph $G'=([k],E')$ of $G$ we denote by 
{\[\psi_{\Delta,d}^{(G')}(n_1,\dots,n_k):=\max_{|P_i|\leq n_1,\dots,|P_k|\leq n_k}\setbuilder{\psi_{\Delta,d}(P_1,\dots,P_k)}{G'  \textrm{ is compatible with }(P_1,\dots,P_k)},
\]}

Using the cutting result Theorem \ref{1/r-cutting} in $k$ rounds as in Section \ref{rec} to bound $\psi_{\Delta,d}^{(G')}(n,\dots,n)$, the corresponding linear optimisation problem obtained for a given profile $\pmb{\lambda}$ is the following.

\begin{align}\label{totalnontrivi}
&\min \langle \pmb\lambda,\mathbf x\rangle \ \ \ \text{subject to: }\\
&\ \ \ \ \ \ \ \ \ \ \ \, \ \ \ \ \ \ \ \ \ \ x_i\geq 0 \ \    \textrm { for } 1\leq i \leq k\\
\label{nontrivi1}
&\lambda_ix_i+\sum_{(i,j)\in E\setminus E'} x_j\geq 1 \ \  \textrm{ for } 1\leq i \leq k
\end{align}

Denoting by $\zeta_{\Delta}(G',\pmb{\lambda})$ be the optimum of the linear program above, we can bound $\psi_{\Delta,d}^{(G')}(n)=\psi_{\Delta,d}^{(G')}(n,\dots,n)$  as follows:
\begin{equation*}
\psi_{\Delta,d}^{(G')}(n,\dots,n)\leq O_{d,\varepsilon}\left ( r^{\zeta_{\Delta}(G',\pmb\lambda)+\varepsilon}\right)\left (\psi_{\Delta,d}^{(G')}\left (\frac{n}{r}\right )
+\sum_{i\in [k]}\sum_{(i,j)\in E'\setminus E}\psi_{\Delta,d}^{(G'\cup\{i,j\})}(n)\right ).
\end{equation*}

To bound $\psi_{\Delta,d}^{(G')}(n)$ from this recursive inequality, we induct on $|E'\setminus E|$ and on $n$. The base case of the induction is $G'=G$. In this case, since $G$ is not an orthogonality graph in $\mathbb{R}^d$, we obtain by Lemma~\ref{spheres} that if $(P_1,\dots,P_k)$ is compatible with $G$, then at least one $P_i$ is of cardinality at most $2$. Thus, $\psi_{\Delta,d}^{(G'}(n)\leq 2 n^{k-1}=O(n^{k-1})$. 

A version of Theorem \ref{thm main weight} that we need in this proof is the following.

\begin{theorem}\label{thm nontrivi weight}
If a proper subgraph $G'$ of $G$ is realisable in $\mathbb{R}^d$ with profile $\pmb{\lambda}$ and $\pmb{\lambda}\geq (1,\dots,1)$, then $\zeta_{\Delta}(G',\pmb{\lambda})\leq k-\min\big\{1,\frac{4}{d}\big\}$.
\end{theorem}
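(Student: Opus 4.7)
I aim to construct an explicit feasible solution $\mathbf{x}$ to the LP \eqref{totalnontrivi}--\eqref{nontrivi1} with $\langle \pmb\lambda, \mathbf x\rangle \le k - \min\{1, 4/d\}$.

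Since $G' \subsetneq G$, fix an edge $(a,b) \in E \setminus E'$. The plan is to set $x_i = 1/\lambda_i$ for $i \notin \{a,b\}$ (each contributing exactly $1$ to the objective, totalling $k-2$) and choose $x_a, x_b$ by solving the two-variable sub-LP
\[
\lambda_a x_a + x_b \geq 1, \quad x_a + \lambda_b x_b \geq 1, \quad x_a, x_b \geq 0,
\]
minimising $\lambda_a x_a + \lambda_b x_b$. A direct calculation yields the optimum $(2\lambda_a\lambda_b - \lambda_a - \lambda_b)/(\lambda_a\lambda_b - 1)$, so the saving over the trivial value $2$ is
\[
\sigma(\lambda_a, \lambda_b) := \frac{\lambda_a + \lambda_b - 2}{\lambda_a\lambda_b - 1},
\]
with $\sigma(1,1) = 1$ understood as a limit. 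Verifying feasibility for the remaining constraints is routine: for $i \notin \{a,b\}$ we have $\lambda_i x_i = 1$, and every edge $(i,j) \in E \setminus E'$ with $i \notin \{a,b\}$ adds non-negative slack.

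The crucial step is to show that $(a,b)$ can be chosen so that $\lambda_a + \lambda_b \le d - 2$. Granting this, for $d \le 4$ the inequality forces $\lambda_a = \lambda_b = 1$ and so $\sigma = 1 = \min\{1, 4/d\}$; for $d \ge 5$, AM--GM gives $\lambda_a\lambda_b \le ((d-2)/2)^2$, whence
\[
\sigma \ge \frac{d-4}{(d-2)^2/4 - 1} = \frac{4(d-4)}{(d-2)^2 - 4} = \frac{4(d-4)}{d(d-4)} = \frac{4}{d},
\]
which completes the bound $\zeta_\Delta(G', \pmb\lambda) \le k - \sigma \le k - \min\{1, 4/d\}$.

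It remains to produce an edge $(a,b) \in E \setminus E'$ with $\lambda_a + \lambda_b \le d - 2$. When $G' \cup \{(a,b)\}$ is itself realisable with profile $\pmb\lambda$, this is immediate from Lemma~\ref{lem clique condition} applied to the clique $\{a,b\}$. The \textbf{main obstacle} is the complementary case, in which every edge $(a,b) \in E \setminus E'$ has $\lambda_a + \lambda_b > d - 2$. Here the plan is to exploit the hypothesis that $G$ is not a subgraph of an orthogonality graph in $\mathbb{R}^d$: combining the geometric realisation of $G'$ given by Lemma~\ref{spheres} with the orthogonality constraints that the edges of $E\setminus E'$ would have to satisfy should yield either a direct contradiction, or else the presence of a richer substructure in the graph $H := ([k], E\setminus E')$ (a triangle or a path, say), whose corresponding multi-variable sub-LP delivers a saving at least as large. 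For instance, a triangle in $H$ with uniform profile $\lambda$ contributes saving $6/(\lambda+2) \ge \min\{1, 4/d\}$ throughout the relevant range $\lambda \le d$, so in all such cases the target bound is met.
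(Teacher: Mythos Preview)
Your construction is exactly the paper's: set $x_\ell=1/\lambda_\ell$ for $\ell\notin\{a,b\}$, pick an edge $(a,b)\in E\setminus E'$, and solve the two--variable sub-LP on $\{a,b\}$; the resulting saving $\sigma(\lambda_a,\lambda_b)=(\lambda_a+\lambda_b-2)/(\lambda_a\lambda_b-1)$ and the AM--GM bound $\sigma\ge 4/d$ under $\lambda_a+\lambda_b\le d-2$ are the same as in the paper (the paper phrases the latter as ``the fraction is minimised at $\lambda_a=\lambda_b$'').

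Where you diverge is that you flag the inequality $\lambda_a+\lambda_b\le d-2$ as needing proof, whereas the paper simply asserts it. You are correct that this is not a consequence of the stated hypotheses: the edge $(a,b)$ lies in $E\setminus E'$, so Lemma~\ref{lem clique condition} applied to $G'$ says nothing about the pair $\{a,b\}$, and for instance with $G'$ the empty graph the profile can be $(d,\dots,d)$. (Note, though, that if $\min(\lambda_a,\lambda_b)=1$ then $\sigma=1$ regardless, so only the case $\lambda_a,\lambda_b\ge2$ is at issue.) Your proposed remedies are not carried through: invoking ``$G$ is not an orthogonality graph'' yields no direct constraint on the profile of a realisation of $G'$, and the triangle-in-$H$ alternative does not cover the case where $E\setminus E'$ is a single edge or a matching with large profiles at its endpoints. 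In short, your approach and the paper's coincide, your caveat is well founded, and neither your write-up nor the paper's provides the missing justification for this step.
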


\begin{proof}
Let $(i,j)\in E'\setminus E$. If $\lambda_i = \lambda_j = 1$, then we can put $x_i = x_j = \frac 12$. Otherwise, set $x_i=\frac{\lambda_j-1}{\lambda_i\lambda_j-1}$, $x_j = \frac{\lambda_i-1}{\lambda_i\lambda_j-1}$. We also set $x_{\ell}=\frac{1}{\lambda_{\ell}}$ for $\ell\neq i$ and obtain in any case that $\mathbf{x}\geq 0$ satisfies \eqref{nontrivi1} and gives $\langle \pmb{\lambda},\mathbf{x} \rangle = k-1$ in the first case and $\langle \pmb{\lambda},\mathbf{x} \rangle=k-2+ \lambda_i\frac{\lambda_j-1}{\lambda_i\lambda_j-1}+\lambda_j\frac{\lambda_i-1}{\lambda_i\lambda_j-1}= k- \frac{\lambda_i+\lambda_j-2}{\lambda_i\lambda_j-1}$ in the second case. It is not difficult to see that the last fraction is minimized for $\lambda_i = \lambda_j=:t$ and is equal to $\frac{2t-2}{t^2-1} = \frac{2}{t+1}$.  We have that $\lambda_i+\lambda_j \le d-2$, and so $t\le \frac d 2-1$, which implies that $\frac 2{t+1}\le \frac 4d$.
\end{proof}

\section{Unit simplices in sets without large complete bipartite graphs}\label{no K_tt}

In this section, we prove Theorem \ref{simplex no K_{t,t}}. The strategy we follow is rather in the spirit of \cite{chains} and \cite{Paths}, instead of \cite{AS}.  We will prove the following slightly more general statement: Assume that $P$ is a set of $n$ points either in $\mathbb{R}^d$ or on a $d$-sphere. If the unit distance graph on $P$ is $K_{t,t}$-free, then the maximum number of unit $(d+1)$-simplices determined by $P$ is $O_{d,\varepsilon,t}(n^{(d+2)/3+\varepsilon})$.

We call a point $q$ \emph{$r$-rich} with respect to a set of points $P$, if there are at least $r$ points in $P$ at unit distance apart from $q$. Theorem \ref{semialg Fox} implies that if $|P|=n$ and $P\subseteq \mathbb{R}^d$, and further $P\cup \{q\}$ contains no $K_{t,t}$ subgraph, then the number of $n^{\alpha}$-rich points with respect to $P$ is 
\begin{equation}\label{alpha rich}
O_{d,\varepsilon,t}(n^{d-(d+1)\alpha+\varepsilon}+n^{1-\alpha+\varepsilon}).
\end{equation}

A simple modification of the proof of Theorem \ref{semialg Fox} implies the same bound for the case when $P$ is contained in a $d$-sphere.

For some fixed $t$, let $U_t(d,n)$ denote the maximum number of unit simplices with $d+1$ vertices $(p_1,\dots,p_{d+1})$ determined  by a set of $n$ points $P$ either in $\mathbb{R}^d$ or on a $d$-sphere. For any fixed $0\leq \alpha \leq 1$ we denote by $R_{\alpha}\subseteq P$ the set of those points that are at least $\frac{1}{2}n^{\alpha}$-rich, but at most $n^{\alpha}$-rich with respect to $P$. Further, let $U_{t,\alpha}(d,n)$ be the subset of those $(d+1)$-simplices in $P$ for which $p_{d+1}\in R_{\alpha}$.
Then we have $U_{t,\alpha}(d,n)\leq \left |R_{\alpha}\right |U_{t}(d-1,n^{\alpha})$.

Using dyadic decomposition we obtain that
\begin{equation*}
U_t(d,n)\leq\sum_{\alpha}U_{t,\alpha}(d,n)\leq \sum_{\alpha}\left |R_{\alpha}\right |U_{t}(d-1,n^{\alpha}),
\end{equation*}
where the sum is taken over logarithmically many $\alpha$'s.

Substituting the bound \eqref{alpha rich} gives
\begin{multline*}U_t(d,n)\leq \log (n) \max_{\alpha}\left \{\left |R_{\alpha}\right |U_{t,\alpha}(d-1,n)\right \}
\\=\max_{\alpha}\left \{O_{d,\varepsilon,t}\left (\min \left \{(n^{d-(d+1)\alpha+\varepsilon}+n^{1-\alpha+\varepsilon}),n \right \} \right)U_{t,\alpha}(d-1,n^{\alpha})\right \}.
\end{multline*}

A careful analysis of this recurrence gives the desired bound. Indeed, the starting case is $d=2$, which is the Spencer-Szemerédi-Trotter bound. For larger $d$ it is not hard to check that the maximum is achieved when $\alpha=\frac{d-1}{d+1}$, in which case the expression on the right hand side can be bounded by
\[O_{d,\varepsilon,t}\left (n^{d-(d-1)+\varepsilon}\right )O_{d,\varepsilon,t}\left( n^{\frac{d-1}{3}+\varepsilon}\right )=O_{d,\varepsilon,n}\left( n^{\frac{d+2}{3}+2\varepsilon}\right ) .\]

\section{Proofs of Theorems~\ref{smallk},~\ref{k=4,d=7} and~\ref{diam-triangle}}\label{sec7}

\begin{proof}[Proof of Theorem~\ref{smallk}] We prove the statement by induction on $d$. The starting cases, $d=4$ and $d=5$, were done in \cite{AS}.

\begin{claim} Let $d\geq 4$. If $G=G([\lfloor d/2 \rfloor+1], E)$ is realisable with profile $\pmb{\lambda}$ in $\mathbb{R}^d$, and the degree of every vertex is strictly less than $\lfloor d/2 \rfloor-1$, then $\zeta(G,\pmb{\lambda})\leq d/2$ if $d$ is even, and $\zeta(G,\mathbf{\lambda})<d/2-1/6$ if $d$ is odd.
\end{claim}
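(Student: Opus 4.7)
The plan is to exhibit an explicit feasible point of the LP \eqref{total}--\eqref{cond} whose value already meets the required bound, rather than analysing the LP structurally. As a preliminary step I would reduce to $\pmb{\lambda}\ge(2,\ldots,2)$: in the outer induction on $d$ behind Theorem~\ref{smallk}, profiles in which some $\lambda_i\le 1$ are disposed of by Lemma~\ref{at least 2} applied with $\alpha_{k',d'}=d'/2+\varepsilon$ for even $d'$ and $\alpha_{k',d'}=d'/2-\tfrac{1}{6}+\varepsilon$ for odd $d'$; monotonicity and the condition $\alpha_{k',d'}+1\le\alpha_{k'+1,d'+2}$ are immediate for this choice, so the Claim only needs to be verified when $\lambda_i\ge 2$ for every $i$.

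The main step is the uniform ansatz $x_i=c/\lambda_i$ with a single scalar $c$, which turns the $i$-th constraint of \eqref{cond} into
\[
c\Bigl(1+\sum_{j\in V_i}\tfrac{1}{\lambda_j}\Bigr)\;\ge\;1.
\]
The degree hypothesis gives $|V_i|=k-1-\deg(i)\ge (\lfloor d/2\rfloor+1)-1-(\lfloor d/2\rfloor-2)=2$, and the singleton case of Lemma~\ref{lem clique condition} gives $\lambda_j\le d-1$, so $\sum_{j\in V_i}1/\lambda_j\ge 2/(d-1)$. The choice $c=(d-1)/(d+1)$ therefore makes every constraint hold.

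The objective value at this feasible point is $\sum_i\lambda_i x_i = kc = k(d-1)/(d+1)$. Substituting $k=\lfloor d/2\rfloor+1$ and separating parities, one obtains for even $d$
\[
kc=\frac{(d+2)(d-1)}{2(d+1)}=\frac{d}{2}-\frac{1}{d+1}\;\le\;\frac{d}{2},
\]
and for odd $d$, $kc=(d+1)(d-1)/(2(d+1))=(d-1)/2=d/2-\tfrac{1}{2}<d/2-\tfrac{1}{6}$. Both bounds required by the Claim thus hold, with room to spare.

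I do not expect a real obstacle: the degree hypothesis of the Claim is designed precisely to force $|V_i|\ge 2$, and combined with the essentially trivial bound $\lambda_j\le d-1$ the LP is trivialised by the uniform rescaling $x_i=c/\lambda_i$. The only point that merits spelling out, and arguably the only non-routine step, is the reduction to $\pmb{\lambda}\ge(2,\ldots,2)$, which goes through Lemma~\ref{at least 2} and the outer induction on $d$ rather than through LP-level manipulations.
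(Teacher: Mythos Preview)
Your approach has the right spirit and is close to the paper's argument, but there is a genuine gap. The claim that ``the singleton case of Lemma~\ref{lem clique condition} gives $\lambda_j\le d-1$'' is not valid: that lemma only yields nontrivial information for cliques of size $\ell\ge 2$, because its proof rests on the pairwise orthogonality of the flats associated to the clique vertices. A single vertex imposes no such constraint, and indeed under the hypothesis of the Claim isolated vertices are unavoidable when $d\in\{4,5\}$ (the degree condition becomes $\deg(i)\le 0$, so $G$ is empty), hence $\lambda_j=d$ is perfectly possible for a realisable profile. With your ansatz $x_i=c/\lambda_i$ and only the trivial bound $\lambda_j\le d$, one gets $\sum_{j\in V_i}1/\lambda_j\ge 2/d$ and must take $c=d/(d+2)$ rather than $(d-1)/(d+1)$. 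The conclusion still goes through: for even $d$ one obtains $kc=\tfrac{d+2}{2}\cdot\tfrac{d}{d+2}=\tfrac{d}{2}$, and for odd $d$ one obtains $kc=\tfrac{d+1}{2}\cdot\tfrac{d}{d+2}=\tfrac{d(d+1)}{2(d+2)}<\tfrac{d}{2}-\tfrac16$ for all $d>1$.

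Once fixed this way, your argument coincides with the paper's, which is more direct. The paper skips both your preliminary reduction and the bookkeeping with the original $\lambda_i$: it simply invokes Lemma~\ref{increasing lambda} to replace $\pmb\lambda$ by $(d,\ldots,d)$ (monotonicity of $\zeta$ makes it enough to bound this worst case) and then takes $x_i=1/(d+2)$ uniformly, checking feasibility via $|V_i|\ge 2$. Your detour through Lemma~\ref{at least 2} really belongs to the surrounding induction in Theorem~\ref{smallk} rather than to the Claim itself, which is a pure statement about $\zeta$; once Lemma~\ref{increasing lambda} is used at the LP level, that reduction is unnecessary here.
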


\begin{proof} By Lemma~\ref{increasing lambda} we may assume that $\lambda_i=d$. Set $x_i=\frac{1}{d+2}$ for every $i\in [\lfloor d/2 \rfloor +1]$. This $x$ is a solution to the linear program defined by $G$ and $\pmb{\lambda}$ with $\langle \pmb{\lambda},\mathbf{x} \rangle\leq \frac{d+2}2\cdot \frac d{d+2} = d/2$ if $d$ is even, and with  $\langle \pmb{\lambda}, \mathbf{x} \rangle \leq \frac{d+1}2\cdot \frac d{d+2}  = \frac d2-\frac d{2d+2} <\frac d2-\frac{1}{6}$ if $d$ is odd.
\end{proof}

Thus, without loss of generality we may assume that $G=G([\lfloor d/2 \rfloor+1], E)$ has a vertex of degree at least $\lfloor d/2 \rfloor-1$. We only explain the induction step in the case when $d$ is even. The proof is similar when $d$ is odd.

Let $P_1,\dots, P_{d/2+1}\subseteq \mathbb{R}^d$ be  such that $|P_1|=\dots=|P_{d/2+1}|=n$, and for any $2\leq j < d/2+1$ if $p_1\in P_1$ and $p_j\in P_j$ then $\|p-q\|=1$. Let $S_1$ be a sphere of dimension $\lambda_1$ containing $P_1$, and $S$ a sphere of dimension $d-\lambda_1-1$ containing $P_2,\dots,P_{d/2+1}$. Further, let $U$ be the affine subspace of dimension $d-\lambda_1$ spanned by $S$. We consider $3$ cases depending on the value of $\lambda_1$.

\smallskip

\textbf{Case 1: } $\lambda_1=1$. In this case the bound follows by Lemma \ref{at least 2} with $\alpha_{k',d'}=n^{d'/2+\varepsilon}$ for $d'$ odd and $\alpha_{k',d'}=n^{d'/2-1/6+\varepsilon}$ for $d'$ even. 

\smallskip

\textbf{Case 2: } $\lambda_1=2$. Partition $P_{d/2+1}=Q_1\bigcup Q_2$ into two sets, where $Q_1$ is the set of those points that are at unit distance from every point of $S$ and $Q_2=P_{d/2+1}\setminus Q_1$. Then 
\[\psi_{d/2+1}\left (P_1,\dots, P_{d/2+1}\right )=\psi_{d/2+1}\left (P_1,\dots, P_{d/2},Q_1\right )+\psi_{d/2+1}\left (P_1,\dots, P_{d/2},Q_2\right ).
\]
Since $\dim U=d-3$, by induction we have 
\[\psi_{d/2-1}\left (P_2,\dots, P_{d/2}\right )=O_{d-3,\varepsilon}\left (n^{\frac{(d-3)}{2}-\frac{1}{6}+\varepsilon}\right ).\]
Further, since $Q_1$ and $P_1$ are contained in the same $2$-sphere, we have
{\small \[\psi_{d/2+1}\left (P_1,\dots, P_{d/2},Q_1\right )\leq \psi_2(P_1,Q_1)\psi_{d/2-1}\left (P_2,\dots, P_{d/2}\right )=O_{d-3,\varepsilon}\left (n^{\frac{4}{3}}n^{\frac{(d-3)}{2}-\frac{1}{6}+\varepsilon}\right )=O_{d,\varepsilon}\left (n^{\frac{d}{2}-\frac{1}{3}+\varepsilon}\right ).\]}

On the other hand, any $q\in Q_2$ is at unit distance apart from a sub-sphere of $S$ of dimension at most $d-4$. Thus, for any $q\in Q_2$ we have
\[\psi_{d/2}\left (P_2,\dots, P_{d/2},\{q\}\right )=O_{d-4,\varepsilon}\left (n^{\frac{d}{2}-2+\varepsilon}\right )\]
by induction. This implies
\[\psi_{d/2+1}\left (P_1,\dots,P_{d/2},Q_2\right )\leq |P_1||Q_2|O_{d-4,\varepsilon}\left (n^{\frac{d}{2}-2+\varepsilon}\right )=O_{d,\varepsilon}\left (n^{\frac{d}{2}+\varepsilon}\right ).
\]
\smallskip 

\textbf{Case 3: } $\lambda_1\geq3$. In this case $U$ is of dimension at most $d-4$, thus
\[\psi_{d/2-1} \left (P_2,\dots,P_{d/2}\right )=O_{d-4,\varepsilon}\left (n^{\frac{d}{2}-2+\varepsilon}\right )
\] by induction. This implies
\[\psi_{d/2+1}\left (P_1,\dots,P_{d/2},P_{d/2+1}\right )\leq |P_1||P_{d/2+1}|O_{d-4,\varepsilon}\left (n^{\frac{d}{2}-2+\varepsilon}\right )=O\left (n^{\frac{d}{2}+\varepsilon}\right ).
\]
\end{proof}

\begin{proof}[Proof of Theorem \ref{k=4,d=7}]

The statement follows from the following two claims.

\begin{claim} If $G=([4],E)$ is realisable with $\pmb{\lambda}$ in $\mathbb{R}^7$ and the degree of every vertex of $G$ is less than $2$, then $\zeta(G,\pmb{\lambda})<\frac{10}{3}$.
\end{claim}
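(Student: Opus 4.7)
The plan is to show $\zeta(G,\pmb\lambda)<\frac{10}{3}$ by exhibiting, in each structural case, an explicit feasible solution to the linear program \eqref{total}--\eqref{cond} whose objective is strictly less than $\frac{10}{3}$. I first assume $\pmb\lambda\ge (2,\ldots,2)$: any vertex with $\lambda_i\le 1$ is absorbed into the overall argument by Lemma~\ref{at least 2}, invoked with exponents $\alpha_{k',d'}$ for smaller $(k',d')$ supplied by Theorems~\ref{unitsimplices} and~\ref{smallk}.

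Because $G$ has maximum degree at most $1$, every clique of $G$ is a singleton or an edge, and Lemma~\ref{lem clique condition} yields $\lambda_i\le 6$ for every vertex and $\lambda_i+\lambda_j\le 5$ for every edge $(i,j)\in E(G)$. Since $G$ has four vertices, it must be a matching of size $0$, $1$, or $2$, and I would handle these three cases separately.

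If $E(G)=\emptyset$, I would first use Lemma~\ref{increasing lambda} to raise the profile to $\lambda_i=6$ for every $i$, then take $x_i=\frac{1}{9}$ uniformly. Each constraint becomes $6\cdot\frac{1}{9}+3\cdot\frac{1}{9}=1$ and the objective is $4\cdot 6\cdot \frac{1}{9}=\frac{8}{3}$. If $G$ has at least one edge $(i,j)$, I would set $x_i=x_j=\frac{1}{2}$ and $x_s=0$ for every $s\notin\{i,j\}$. The constraints at $i$ and $j$ reduce to $\lambda_i/2\ge 1$ and $\lambda_j/2\ge 1$, both ensured by the assumption $\pmb\lambda\ge(2,\ldots,2)$. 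For any other vertex $s$, the constraint reduces to $\sum_{v\in V_s}x_v\ge 1$; since the maximum degree of $G$ is at most $1$ and the only neighbour of $i$ (resp.\ $j$) in $G$ is already $j$ (resp.\ $i$), the vertex $s$ is non-adjacent to both $i$ and $j$, whence $\{i,j\}\subseteq V_s$ and the left-hand side is at least $x_i+x_j=1$. The objective is $(\lambda_i+\lambda_j)/2\le \frac{5}{2}$.

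Combining the three cases gives $\zeta(G,\pmb\lambda)\le \max\bigl\{\frac{8}{3},\frac{5}{2}\bigr\}=\frac{8}{3}<\frac{10}{3}$, which proves the claim with uniform slack. The key structural input is that maximum degree $\le 1$ forces every non-endpoint vertex of a chosen edge to contain both endpoints in its non-neighbourhood $V_s$; this is precisely what lets the cheap ``mass on a single edge'' solution be feasible, and this extra room, unavailable in the generic setting of Theorem~\ref{thm main weight}, is what drives the LP value below $\frac{10}{3}$.
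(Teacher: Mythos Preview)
Your argument has two gaps that prevent it from proving the claim as stated.

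First, the appeal to Lemma~\ref{at least 2} to reduce to $\pmb\lambda\ge(2,\ldots,2)$ is a category error: that lemma bounds $\psi^{(G)}_{k,d}(n)$, not the LP optimum $\zeta(G,\pmb\lambda)$, and the claim is purely about $\zeta$. You might try to repair this via Lemma~\ref{increasing lambda} (replace each $\lambda_i<2$ by $2$), but then the raised profile need not be realisable, so you lose the clique constraint $\lambda_i+\lambda_j\le 5$ that your edge case relies on. Concretely, if $(i,j)\in E$ with original $(\lambda_i,\lambda_j)=(0,5)$, your ``mass on one edge'' vector $x_i=x_j=\tfrac12$ fails the constraint at $i$, and increasing $\lambda_i$ to $2$ gives objective $(2+5)/2=7/2>10/3$.

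Second, the assertion $\lambda_i\le 6$ from Lemma~\ref{lem clique condition} with a one-vertex clique is unjustified: that lemma is vacuous for $\ell=1$ (the orthogonality constraint is empty), and by the profile convention an isolated vertex can have $\lambda_i=d=7$. So in the empty-graph case you cannot ``raise to $\lambda_i=6$'' when the true value may be $7$.

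Both issues disappear if you follow the paper's one-line argument: use Lemma~\ref{increasing lambda} to raise \emph{every} $\lambda_i$ to $7$ (valid since $\lambda_i\le d$ always), then take $x_i=\tfrac19$ for all $i$. Since $\deg(i)\le 1$ gives $|V_i|\ge 2$, each constraint reads $\tfrac79+|V_i|\cdot\tfrac19\ge 1$, and the objective is $\tfrac{28}{9}<\tfrac{10}{3}$. No clique bound and no lower bound on the $\lambda_i$ are needed.
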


\begin{proof} By Lemma~\ref{increasing lambda} we may assume that $\lambda_i=7$ for every $i\in [4]$. Then by setting $x_i=\frac{1}{9}$ for every $i\in [4]$ we obtain a solution for the linear program defined by $G$ and $\mathbf{\lambda}$ with $\langle \pmb{\lambda}\mathbf{x} \rangle =\frac{28}{9}<\frac{10}{3}$.
\end{proof}

\begin{claim} If $G=G([4],E)$ is a graph with a vertex of degree at least $2$, then $\psi_4^{(G)}(n)=O(n^{10/3})$.
\end{claim}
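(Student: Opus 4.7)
The plan is to apply the LP framework of Section~\ref{sec6} together with Lemma~\ref{at least 2} and the inner induction on edges used in the proof of Theorem~\ref{unitsimplices}, splitting the argument by whether the profile $\pmb{\lambda}$ of the realising configuration has all entries $\geq 2$.

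For profiles $\pmb{\lambda} \geq (2, 2, 2, 2)$, I will show that $\zeta(G, \pmb{\lambda}) \leq 17/6 < 10/3$ strictly for every $G$ with a vertex of degree $\geq 2$. First, $G$ must be triangle-free: a triangle on $\{a, b, c\}$ in $G$ would force, via Lemma~\ref{spheres} applied to $(P_a, P_b, P_c)$, the bound $\lambda_a + \lambda_b + \lambda_c \leq 4$, contradicting $\pmb{\lambda} \geq (2, 2, 2)$. Hence $G$ is isomorphic to one of $P_3$ (with an isolated vertex), $K_{1,3}$, $P_4$, or $C_4$. Further, for any edge $\{v, w\}$ of $G$, Lemma~\ref{spheres} gives $\lambda_v + \lambda_w \leq 5$, so $\lambda_v \leq 3$ whenever $v$ has a neighbour, leaving only finitely many admissible profiles to check. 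For each, I will solve the LP \eqref{total}--\eqref{cond} explicitly; the extremal case turns out to be $G = C_4$ with profile $(2, 3, 2, 3)$, where the symmetric solution $x_1 = x_3 = 1/3$, $x_2 = x_4 = 1/4$ yields $\zeta = 17/6$. With $\zeta$ strictly below $10/3$, choosing $\varepsilon$ small enough in the recursion \eqref{recursion} and using the inner induction on $|E(G)|$ yields $\psi_4^{(G)}(n) = O(n^{10/3})$, since the $r^{\zeta + \varepsilon}$ factor from the cutting is subsumed by the inductive $n^{10/3}$ contribution coming from the terms $\psi_4^{(G \cup \{i, j\})}(n)$.

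For profiles with some $\lambda_i \leq 1$, I will apply Lemma~\ref{at least 2} with $\alpha_{4, 7} = 10/3$, $\alpha_{3, 7} = 3$, and $\alpha_{3, 5} = 7/3$. The bound $\psi_{3, 7}(n) = O(n^3)$ is trivial, while the sharp bound $\psi_{3, 5}(n) = O(n^{7/3})$ is available from Agarwal--Sharir~\cite{AS}, corresponding to the smallest odd-dimensional non-trivial instance of Conjecture~\ref{conj unit simplices}. Together with Lemma~\ref{at least 2}, these yield $\psi_4^{(G)}(n) = O(n^{10/3})$ in the small-$\lambda$ regime. The main obstacle is the case analysis underlying the LP bound: for each of the four possible graphs one must verify $\zeta(G, \pmb{\lambda}) \leq 17/6$ for every compatible profile, which requires a careful combination of the pairwise bound $\lambda_v + \lambda_w \leq 5$ with appropriate choices of the LP variables (for instance, setting $x_v = 0$ for isolated vertices $v$ with large $\lambda_v$, so that the LP decouples into a smaller subproblem). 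A secondary technical subtlety is avoiding $\varepsilon$-loss in the final exponent, which relies crucially on $\zeta$ being strictly below $10/3$ in the LP case and on the sharpness of the $(3, 5)$ bound in the Lemma~\ref{at least 2} case.
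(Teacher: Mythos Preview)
Your approach is correct but takes a genuinely different route from the paper. The paper gives a direct geometric argument: assuming vertex~$1$ has degree $\geq 2$ (say $\{1,2\},\{1,3\}\in E$), it does a three-way case split on $\lambda_1\in\{1,2,\geq 3\}$. In each case the sets $B,C$ (and sometimes $D$) are forced onto a low-dimensional sphere, and the count is finished using the Erd\H os--Hickerson--Pach bound $O(n^{4/3})$ for unit distances on a $2$-sphere (Cases~2 and~3) together with the Agarwal--Sharir bound $\psi_{3,5}(n)=O(n^{7/3})$ (Case~1). No recursion or LP is invoked for this claim; it serves as a standalone base case for the outer argument of Theorem~\ref{k=4,d=7}.

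Your argument instead recycles the LP/recursion machinery of Section~\ref{sec6}: you observe that for triangle-free $G$ with a degree-$\geq 2$ vertex and profile $\pmb\lambda\geq(2,2,2,2)$ one always has $\zeta(G,\pmb\lambda)\leq 17/6<10/3$, with the extremal case $C_4$ at profile $(2,3,2,3)$. This strict gap lets the recursion~\eqref{recursion} close without an $\varepsilon$-loss, and the inner induction on $|E(G)|$ terminates either at $K_4$ (where $\psi_4^{(K_4)}(n)=O(n^3)$) or at a graph containing a triangle (where the clique bound~\eqref{clique condition} forces some $\lambda_i\leq 1$, so Lemma~\ref{at least 2} applies). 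For profiles with some $\lambda_i\leq 1$ you use Lemma~\ref{at least 2} with $\alpha_{3,5}=7/3$ and $\alpha_{3,7}=3$, which reduces to the same $\psi_{3,5}$ input the paper uses in its Case~1. Your route is more systematic and avoids the $2$-sphere unit-distance bound entirely, at the cost of a finite LP case analysis over the four triangle-free graphs $P_3\cup\{\mathrm{pt}\}$, $K_{1,3}$, $P_4$, $C_4$ and their admissible profiles (using $\lambda_v+\lambda_w\leq 5$ on edges and Lemma~\ref{increasing lambda} to push to extremal profiles). Both proofs ultimately depend on the sharp (no $\varepsilon$) $\psi_{3,5}(n)=O(n^{7/3})$ bound; the paper invokes it explicitly and you need it for Lemma~\ref{at least 2} to land at $\alpha_{4,7}=10/3$ on the nose.
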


\begin{proof} It is sufficient to prove the following. If $A,B,C,D\subseteq \mathbb{R}^7$ are four point sets such that $|A|=|B|=|C|=|D|=n$ and for any $a\in A$, $b\in B$ and $c\in C$ we have $d(a,b)=d(a,c)=1$, then $\psi\left ( A,B,C,D,\right )=O\left ( n^{10/3}\right)$. Let $S$ be the sphere of the smallest dimension that contains $A$ and $U$ be the affine subspace spanned by $S$. We have a number of cases depending on the dimension of $S$.

\textbf{Case 1: } $S$ is of dimension $1$. Then $B$ and $C$ are contained a subspace $P$ of dimension $5$  orthogonal to $U$. Partition $D=D_1\cup D_2$ into two parts, where $D_1$ contains those points of $D$ that are at distance $1$ apart from at most $2$ points of $A$, and $D_2=D\setminus D_1$. As $D_2\subseteq P$, the number of unit triangles $b,c,d$ for which $b\in B, c\in C, d\in D_2$ by \cite{AS} is $O\left (n^{\frac{7}{3}}\right )$. Thus we obtain the following.
\[\psi_4\left ( A,B,C,D\right )=\psi_4\left ( A,B,C,D_1 \right )+\psi\left ( A,B,C,D_2\right )_4\leq 2|B||C||D_1|+|A|O\left ( n^{\frac{7}{3}}\right )=O\left ( n^{\frac{10}{3}}\right ).\]

\textbf{Case 2: } $S$ is of dimension $2$. Let $T$ be sphere of dimension $3$ whose points are at distance $1$ from every point of $S$. Then $B,C\subseteq T$.

Partition $D=D_1\cup D_2$ into two parts, where $D_1$ is the set of those points in $D$ that are at distance $1$ apart from every point of $T$, and $D_2=D\setminus D_1$. Clearly
\[\psi_4\left ( A,B,C,D\right )=\psi_4\left ( A,B,C,D_1\right )+\psi_4\left ( A,B,C,D_2\right ).
\] Since $A,D_1\subseteq S$, by \cite{sphere} we obtain \[\psi_4\left ( A,B,C,D_1\right )=O\left ( |B| |C||A+D_1|^{\frac{4}{3}}\right )=\left (n^{\frac{10}{3}}\right).\]

For each $d\in D_2$ the set of those points in $B$ and $C$ that are at distance $1$ apart from $d$ are in a $2$-sphere, thus
\[\psi_4\left ( A,B,C,D_2\right )\leq O\left (|A||D_2||B+C|^{\frac{4}{3}}\right )=O\left (n^{\frac{10}{3}}\right ).\]

\textbf{Case 3: } $S$ is of dimension at least $3$. In this case $B$ and $C$ are contained in a sphere of dimension at most $2$, hence the number of pairs $(b,c)$ where $b\in B, c\in C$ and $d(b,c)=1$ is $O\left (n^{\frac{4}{3}}\right)$. Thus
\[\psi_4\left ( A,B,C,D\right )\leq |A||D|O\left ( n^{\frac{4}{3}}\right)=O\left (n^{\frac{10}{3}}\right ).\]
\end{proof}

It is sufficient to prove $\psi_4^{(G)}=O\left (n^{10/3}\right )$ holds for any $G$. We induct on $n$, and on the number of the edges of the complement of $G$. By \eqref{recursion} we have

\[\psi_4^{(G)}(n)\leq O\left (r^{\zeta(G)}\log^{4}\right)\left (\psi_4^{(G)}\left (\frac{n}{r}\right )+\sum_{i\in V}\psi_{3}^{(G_i)}(n)+\sum_{i\ne j,(i,j)\notin E}\psi_4^{(G(i,j))}(n)\right ).\]

Either $G$ has a vertex with degree at least $2$, and then we are done by the second claim, or $\zeta(G,\pmb{\lambda})< \frac{10}{3}$ for any $\pmb{\lambda}$ by the first claim. In this second case we are done by choosing $r$ sufficiently large and using the induction on $\psi_4^{(G)}\left (\frac{n}{r}\right )$ and on $\sum_{i\ne j,(i,j)\notin E}\psi_4^{(G(i,j))}(n)$.
\end{proof}

Note that unlike the proofs of the more general results (such as Theorem \ref{unitsimplices}), the proof about is more specific to unit distances simplices. However, it can easily be modified for bounding the number of congruent simplices. The parts that are specific to unit simplices are in Case 2 and Case 3, where we cite a result from \cite{sphere} according to which the number of maximum number of unit distances determined by a set of $n$ points on a $2$-sphere is $O(n^{4/3})$. In the corresponding situation for general congruent simplices, we need a bipartite variant of this result, in which we count the maximum number of pairs at a fixed distance between two sets of $n$ points, such that the sets are contained in two concentric $2$-spheres. Since both the bipartite version and the original problem from \cite{sphere} can be reduced to bound the number of incidences between a set of $n$ circles of fixed radius and a set of $n$ points in the plane, the bound does not change.

\begin{proof}[Proof of Theorem \ref{diam-triangle}]
The proof is very similar to the proof of Theorem \ref{k=4,d=7}, but with the following versions of the claims, and with using \eqref{recursion diam} instead of \eqref{recursion}.
\begin{claim} If $G=([3],E)$ is the empty graph, then for any profile $\pmb{\lambda}$ we have $\xi(G,\pmb{\lambda})<2$.
\end{claim}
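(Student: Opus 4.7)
The LP defined by (\ref{total2})--(\ref{cond2}) for empty $G$ on $[3]$ asks to minimize $\sum_{i=1}^3 m_i x_i$ subject to $x_i\ge 0$ and $m_i x_i + x_j + x_k \ge 1$ for every $\{i,j,k\}=[3]$, where $m_i := \lfloor (\lambda_i+1)/2 \rfloor$. The plan is to first establish a profile-monotonicity lemma for $\xi$ (mirroring Lemma \ref{increasing lambda}), then reduce to the maximal profile in $\mathbb{R}^5$, and finally solve the resulting symmetric LP by inspection.

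To prove monotonicity, suppose $\pmb\lambda \le \pmb{\bar\lambda}$ componentwise with all entries at least $1$, and let $\bar{\mathbf{x}}$ be an optimal solution for the $\pmb{\bar\lambda}$-LP. Setting $x_i := (\bar m_i/m_i)\bar x_i$ (well defined since $m_i\ge 1$), we get $m_i x_i = \bar m_i \bar x_i$ and $x_j \ge \bar x_j$ for every $j$ (because $\bar m_j \ge m_j$). Each constraint of the $\pmb\lambda$-LP then evaluates to $m_i x_i + \sum_{j\ne i} x_j \ge \bar m_i \bar x_i + \sum_{j\ne i} \bar x_j \ge 1$, so $\mathbf{x}$ is feasible with the same objective value. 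This yields $\xi(G,\pmb\lambda)\le \xi(G,\pmb{\bar\lambda})$.

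In the setting of Theorem \ref{diam-triangle} ($d=5$), every realisable profile satisfies $\pmb\lambda\le (5,5,5)$. Profiles with some $\lambda_i=0$ correspond to $|P_i|\le 3$, which makes the number of triangles trivially $O(n^2)$, while the $\lambda_i=1$ case is handled by Theorem \ref{at least 2} inside the outer induction on $d$ (with $\alpha_{k',d'}=d'/2+\varepsilon$, just as in the global proof of Theorem \ref{diam simplices}). Thus we may assume $\pmb\lambda\in\{2,3,4,5\}^3$, and by monotonicity $\xi(G,\pmb\lambda)\le \xi(G,(5,5,5))$.

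For $\pmb\lambda=(5,5,5)$ each $m_i$ equals $\lfloor 6/2 \rfloor=3$, and the LP becomes fully symmetric: minimize $3(x_1+x_2+x_3)$ subject to $3x_i+x_j+x_k\ge 1$ for each $i$. The symmetric choice $x_1=x_2=x_3=\tfrac15$ makes every constraint tight, since $3/5+1/5+1/5=1$, and gives objective $9/5$; hence $\xi(G,\pmb\lambda)\le 9/5<2$, as required. The only real obstacle along the way is the monotonicity lemma, but it boils down to a one-line rescaling; the rest of the argument is a direct verification on a small symmetric LP, parallel in spirit to Claim~1 in the proof of Theorem \ref{k=4,d=7}.
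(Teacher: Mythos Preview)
Your proposal is correct and follows essentially the same approach as the paper: use monotonicity of $\xi$ in the profile (the $\xi$-analogue of Lemma~\ref{increasing lambda}) to reduce to $\pmb\lambda=(5,5,5)$, then take the symmetric point $x_1=x_2=x_3=\tfrac15$, which makes every constraint tight and gives objective $9/5<2$. You are simply more explicit than the paper about the monotonicity step and about why the $\lambda_i\le 1$ profiles are not an issue in the application, but the core argument is identical.
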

\begin{proof}With a suitable modification of Lemma \ref{increasing lambda} for $\xi$, we may assume that $\lambda_i=5$ for every $i\in [3]$, and set $x_i=\frac{1}{5}$ for every $i\in[3]$.
\end{proof}

\begin{claim}If $G=G([3],E)$ has at least one edge, then $\phi_3^{(G)}(n)=O(n^2)$.
\end{claim}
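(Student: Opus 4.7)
The plan is to use the single edge in $E$ to pin down the configuration of $P_1$ and $P_2$ on two orthogonal spheres with a common centre, and then split $P_3$ according to whether the distance equations degenerate on it. Assume without loss of generality that $\{1,2\}\in E$, so $|p_1-p_2|=1$ for all $(p_1,p_2)\in P_1\times P_2$; if $|P_i|\le 2$ for some $i$ the bound is immediate, so we may assume $|P_1|,|P_2|\ge 3$. A standard centroid argument (matching the content of Lemma~\ref{spheres}) shows that there exists a point $O$ and orthogonal linear subspaces $V_1,V_2$ through $O$ of dimensions $\lambda_i+1$, together with radii $r_1,r_2>0$ with $r_1^2+r_2^2=1$, such that $P_i$ lies on the sphere $S_i\subset V_i$ of radius $r_i$ centred at $O$. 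Recentring at $O$ and using that the configuration lives in $\R^5$ gives $\lambda_1+\lambda_2\le 3$; we may therefore assume $\lambda_2=1$, i.e.\ $S_2$ is a circle in the $2$-plane $V_2$.

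For each $p_3\in P_3$ decompose orthogonally $p_3=q_1+q_2+q_3$ with $q_i\in V_i$ for $i=1,2$ and $q_3\perp V_1+V_2$. Expanding $|p_3-p_2|^2$ and using $p_3\cdot p_2=q_2\cdot p_2$ together with $|p_2|=r_2$ turns the equation $|p_3-p_2|=1$ into the affine condition
\[
q_2\cdot p_2 \;=\; \tfrac{1}{2}\bigl(|p_3|^2+r_2^2-1\bigr).
\]
When $q_2\ne 0$ this defines an affine line in the $2$-plane $V_2$, whose intersection with the circle $S_2$ contains at most two points. Summing the triangle counts over such ``generic'' $p_3\in P_3$ gives at most $2|P_1||P_3|=O(n^2)$ unit triangles.

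It remains to bound the contribution of the ``special'' set $P_3^{\mathrm{sp}}:=\{p_3\in P_3: q_2(p_3)=0 \text{ and } |p_3|^2=r_1^2\}$; for every $p_3$ with $q_2=0$ outside $P_3^{\mathrm{sp}}$ the distance equation has no solution, while for $p_3\in P_3^{\mathrm{sp}}$ every element of $P_2$ lies at distance $1$. Hence the special contribution equals $|P_2|\cdot\sum_{p_3\in P_3^{\mathrm{sp}}}|N_1(p_3)|$, where $N_1(p_3)=\{p_1\in P_1: |p_1-p_3|=1\}$. Now $q_2=0$ forces $p_3\in V_2^\perp$, and since $V_1\perp V_2$ we also have $P_1\subseteq V_1\subseteq V_2^\perp$; the subspace $V_2^\perp$ has dimension $5-2=3$, and $P_1\cup P_3^{\mathrm{sp}}$ inherits diameter at most $1$ from $\bigcup_i P_i$. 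The sum therefore counts diameter pairs in a set of at most $2n$ points of diameter $\le 1$ in $\R^3$; by the Gr\"unbaum--Heppes--Straszewicz theorem it is at most $2(|P_1|+|P_3^{\mathrm{sp}}|)-2=O(n)$, so the special contribution is $|P_2|\cdot O(n)=O(n^2)$. Combining the two contributions proves $\phi_3^{(G)}(n)=O(n^2)$.

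The main obstacle in the plan is the special set $P_3^{\mathrm{sp}}$: on it the sphere-intersection argument gives no bound at all, since $|N_2(p_3)|$ can be as large as $|P_2|$. The key observation that rescues the argument is that the condition $q_2=0$ kills the $V_2$-direction, dropping the remaining bipartite incidence problem into a $3$-dimensional subspace where the inherited diameter hypothesis becomes strong enough to invoke the sharp classical diameter-graph bound. Without the diameter assumption the best available substitute is the $O(n^{4/3})$ Spencer--Szemer\'edi--Trotter unit-distance bound in $\R^3$, which would be insufficient to close the estimate at $O(n^2)$.
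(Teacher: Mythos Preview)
Your proof is correct and follows essentially the same approach as the paper's: split $P_3$ according to how many points of the circle $S_2$ are at unit distance, handle the generic part by the trivial $2|P_1||P_3|$ bound, and for the special part observe that $P_3^{\mathrm{sp}}$ falls into the $3$-dimensional orthogonal complement $V_2^\perp$ together with $P_1$, where the inherited diameter condition and the Gr\"unbaum--Heppes--Straszewicz bound give $O(n)$ diameter pairs. The paper phrases the last step as ``$C_2$ lies on the same $2$-sphere as $A$'' and cites Swanepoel's diameter result, but this is the same argument; your explicit coordinate decomposition simply makes the geometry behind that containment transparent.
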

\begin{proof}
Let $A,B,C\subseteq \mathbb{R}^5$ be sets of $n$ points such that the distance between any two points of $A$ and $B$ is $1$. Then either $\min \{|A|,|B|\}\leq 2$, in which case the bound is trivial, or $|A|,|B|\geq 3$. Then without loss of generality we may assume that $A$ is contained in a $2$-sphere, and $B$ is contained in a circle. Let $C_1\subseteq C$ be the set of those points in $C$ that are at distance $1$ from at most $2$ points of $B$, and let $C_2=C\setminus C_1$. Then $\phi_3(A,B,C_1)=2|C_1||A|=O(n^2)$. Further, $C_2$ is contained in the same $2$-sphere as $A$, thus by \cite{Swanepoel} we have $\phi_3(A,B,C_2)\leq O(n)|B|=O(n^2)$.
\end{proof}
  
\end{proof}

\section{Further results related to Conjecture \ref{conj unit simplices} and the LP}\label{sec LP}

In this final section we prove some results related to the LP that seems to be helpful for making further progress on Conjecture \ref{conj unit simplices}. The only geometric condition that Agarwal and Sharir \cite{AS} imposed on the LP was condition \eqref{clique condition}. We list below a few results that shows that if our ultimate goal is to solve Conjecture \ref{conj unit simplices}, and not the LP itself, we may impose further geometric conditions on the LP.

\begin{lemma}\label{odd cycle} If $G=([k],E)$ is realisable in $\mathbb{R}^d$ with profile $\pmb{\lambda}$ in $\mathbb{R}^d$, and $C$ is a cycle of length $2\ell+1$ in $G$, then \[\sum_{i\in C}\lambda_i\leq \ell d-(2\ell+1).\]
\end{lemma}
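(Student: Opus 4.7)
The plan is to reformulate the statement as a pure linear-algebra inequality on the dimensions of direction subspaces, and then prove it by a short telescoping argument based on the modular identity, in which the closing edge of the cycle absorbs the leftover terms.

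Enumerate the cycle as $C = v_1 v_2 \cdots v_{2\ell+1} v_1$. After reducing to the nondegenerate regime $\pmb\lambda \ge (2,\ldots,2)$ in the standard way (in the spirit of Lemma~\ref{at least 2}), let $V_i \subseteq \R^d$ denote the linear direction of the affine hull of $P_{v_i}$; by the minimality in the definition of $\lambda_{v_i}$, we have $\dim V_i = \lambda_{v_i}+1$. For each cycle edge $(v_i, v_{i+1})$, Lemma~\ref{spheres} applied to $P_{v_i}$ and $P_{v_{i+1}}$ forces the affine hulls, and therefore $V_i$ and $V_{i+1}$, to be orthogonal. So the lemma reduces to showing that any subspaces $V_1,\ldots,V_{2\ell+1}\subseteq \R^d$ with $V_i \perp V_{i+1}$ cyclically satisfy $\sum_{i=1}^{2\ell+1}\dim V_i \le \ell d$.

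For each $j = 1,\ldots,\ell$, orthogonality of $V_{2j}$ with both $V_{2j-1}$ and $V_{2j+1}$ gives $V_{2j} \subseteq (V_{2j-1} + V_{2j+1})^\perp$, whence $\dim V_{2j} \le d - \dim(V_{2j-1} + V_{2j+1})$; summing over $j$ yields $\sum_{j=1}^{\ell}\dim V_{2j} \le \ell d - \sum_{j=1}^{\ell}\dim(V_{2j-1}+V_{2j+1})$. The key ingredient is the following sublemma, valid for arbitrary subspaces $U_1, \ldots, U_{\ell+1}\subseteq\R^d$:
\[
\sum_{i=1}^{\ell}\dim(U_i+U_{i+1})\ \ge\ \dim(U_1+\cdots+U_{\ell+1})+\sum_{i=2}^{\ell}\dim U_i,
\]
proved by induction on $\ell$ from the modular identity $\dim A + \dim B = \dim(A+B) + \dim(A\cap B)$ applied to $A = U_1+\cdots+U_\ell$ and $B = U_\ell + U_{\ell+1}$, together with the obvious inclusion $U_\ell\subseteq A\cap B$. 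Applying the sublemma with $U_i = V_{2i-1}$ gives $\sum_{j=1}^{\ell}\dim V_{2j} \le \ell d - \dim(V_1+V_3+\cdots+V_{2\ell+1}) - \sum_{j=2}^{\ell}\dim V_{2j-1}$.

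Adding $\sum_{j=1}^{\ell+1}\dim V_{2j-1}$ to both sides, the odd-indexed dimensions almost cancel and we are left with
\[
\sum_{i=1}^{2\ell+1}\dim V_i\ \le\ \ell d+\dim V_1+\dim V_{2\ell+1}-\dim(V_1+V_3+\cdots+V_{2\ell+1}).
\]
The odd length of $C$ now pays off, precisely through the closing edge $(v_{2\ell+1},v_1)$: we have $V_1\perp V_{2\ell+1}$, so $V_1\cap V_{2\ell+1}=0$ and hence $\dim(V_1+V_{2\ell+1})=\dim V_1+\dim V_{2\ell+1}$; since $V_1+V_{2\ell+1}\subseteq V_1+V_3+\cdots+V_{2\ell+1}$, the correction term on the right is nonpositive. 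This yields $\sum_i\dim V_i\le \ell d$, which rewrites as $\sum_{v\in C}\lambda_v\le \ell d-(2\ell+1)$. The only step that requires any real care is the sublemma; everything else is bookkeeping with the modular identity.
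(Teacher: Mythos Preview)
Your core linear-algebra argument is correct: the sublemma follows cleanly from the modular identity, and combining it with the even-index orthogonality bound and the closing edge gives $\sum_i \dim V_i \le \ell d$, which is exactly what is needed.

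Two remarks. First, the preliminary ``reduction to $\pmb\lambda\ge(2,\ldots,2)$ in the spirit of Lemma~\ref{at least 2}'' is out of place here. Lemma~\ref{at least 2} is an inductive device for bounding $\psi_{k,d}$, not a tool for proving profile inequalities, and in any case your argument never uses $\lambda_i\ge 2$. What you actually need at the start is simply that every $v_i\in C$ has an edge, so by Lemma~\ref{spheres} the set $P_{v_i}$ lies on a sphere and hence $\lambda_{v_i}\le d-1$; this is exactly what guarantees subspaces $V_i$ of dimension $\lambda_{v_i}+1$ with $V_i\perp V_{i+1}$ along edges (these are the $U_i$ set up right after Lemma~\ref{spheres}, which is a safer choice than ``direction of the affine hull of $P_{v_i}$'' in the degenerate cases). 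With that correction, the proof goes through for all profiles, as required by the lemma.

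Second, on the comparison with the paper: both arguments rest on the modular identity and consecutive orthogonality, but the bookkeeping is organised differently. The paper labels the cycle as $(a_1,\ldots,a_\ell,v,b_\ell,\ldots,b_1)$, pairs $A_i=U_{a_i}$ with $B_i=U_{b_i}$, and telescopes the single recursion $\dim(A_i\cap B_i)\le d-\dim A_{i-1}-\dim B_{i-1}+\dim(A_{i-1}\cap B_{i-1})$ starting from $A_1\cap B_1=0$ (the closing edge) up to the bound $\dim U_v\le \dim(A_\ell^\perp\cap B_\ell^\perp)$. Your route instead splits odd/even indices, bounds the even ones via $(V_{2j-1}+V_{2j+1})^\perp$, and packages the telescoping into a separate submodularity-type sublemma on $\sum\dim(U_i+U_{i+1})$. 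The paper's path is a little shorter since the telescoping is one line; yours isolates a reusable inequality. Neither is more general.
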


\begin{proof} First note that $\lambda_i\leq d-1$ for every $i\in C$. Thus, there are subspaces $U_1,\dots,U_k$ in $\mathbb{R}^d$ such that $U_i$ is of dimension $\lambda_i+1$ and if $(i,j)\in E$ then $U_i$ is orthogonal to $U_j$. Let the vertices of $C$ in the cyclic order be $(a_1,a_2,\dots,a_{\ell},v,b_{\ell},b_{\ell-1},\dots,b_1)$, and for $i\in [\ell]$ let $A_i=U_{a_i}$, $B_i=U_{b_i}$ and $V=U_v$. For each $2\leq i \leq \ell$ we have \[\dim{(A_i\cap B_i)}\leq \dim{A_{i-1}^{\bot}\cap B_{i-1}^{\bot}}\]
and
\[\dim{(A_{i-1}^{\bot}\cap B_{i-1}^{\bot})}=d-\dim A_{i-1}-\dim B_{i-1}+\dim {(A_{i-1}\cap B_{i-1})}.\] These, together with the fact that $C\subseteq A_{\ell}^\bot\cap B_{\ell}^\bot$ holds for every $i\in[k]$ imply \[\dim C\leq \sum_{i\in [\ell]} (d-\dim A_i-\dim B_i).\] 
From these we obtain
\begin{multline*}\sum_{i\in C}\lambda_i=\sum_{i\in C}\dim U_i-1\leq \dim C+\sum_{i\in \ell}(\dim A_i+\dim B_i)-(2\ell+1) \\ 
\leq\sum_{i\in \ell}(d-\dim A_i-\dim B_i)+\sum_{i\in \ell}(\dim A_i+\dim B_i)-(2\ell+1)= \ell d-)-(2\ell+1).
\end{multline*}
\end{proof}

This allows us to prove a strong bound on the solution of the LP under the assumption that the defining graph has large odd-girth.

\begin{theorem}\label{shortest odd} Assume that $G=([k],E)$ is realisable in $\mathbb{R}^d$ with profile $\pmb{\lambda}$. Assume further that the shortest odd cycle $C$ in $G$ has length $2\ell+1$ for $\ell\geq 2$. Then 
\[\zeta(G,\mathbf{\lambda})\leq \frac{\ell}{2\ell-1}d.\]
\end{theorem}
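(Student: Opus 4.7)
The plan is to exhibit a feasible solution to the LP \eqref{total}--\eqref{cond} that is supported only on the vertices of the shortest odd cycle $C$ of length $2\ell+1$, and then bound its objective using Lemma \ref{odd cycle}. Concretely, I will set $x_i := \frac{1}{2\ell-1}$ for $i \in C$ and $x_i := 0$ for $i \notin C$, and argue that this point is feasible and has objective value at most $\frac{\ell d}{2\ell-1}$. As in the proof of Theorem \ref{unitsimplices}, we may assume $\pmb\lambda \geq (2,\ldots,2)$ via Lemma \ref{at least 2}.

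The main technical step is the following structural claim: since $C$ is a shortest odd cycle, it is induced in $G$ (no chord), and every $v \notin C$ has at most two neighbors on $C$. For the first part, a chord would split $C$ into two cycles of opposite parities whose lengths sum to $2\ell+3$, so the odd one has length at most $2\ell$, contradicting the choice of $C$. For the second part, suppose $v \notin C$ has two neighbors $u, w \in C$ whose short arc on $C$ has length $a \geq 1$. If $a=1$, then $vuw$ is a $3$-cycle, contradicting $\ell \geq 2$. If $a \geq 2$, the two cycles through $v$ using the two arcs have lengths $a+2$ and $(2\ell+1-a)+2$ of opposite parities; the odd one must have length $\geq 2\ell+1$, which forces the short arc $a=2$. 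Hence any two neighbors of $v$ on $C$ lie at arc-distance exactly $2$; but in $C_{2\ell+1}$ only two vertices lie at arc-distance $2$ from any fixed $u \in C$, and their mutual arc-distance is $\min(4, 2\ell-3) \ne 2$ for $\ell \geq 2$, so $v$ cannot have three such neighbors.

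Given the structural claim, feasibility is immediate: for $i \in C$, the $2\ell-2$ non-neighbors of $i$ in $C$ contribute $\frac{2\ell-2}{2\ell-1}$, and $\lambda_i x_i \geq \frac{2}{2\ell-1}$, totaling at least $1$; for $i \notin C$, the structural claim yields at least $2\ell-1$ non-neighbors in $C$, so $\sum_{j \in V_i} x_j \geq 1$ directly. Finally,
\[\langle \pmb\lambda, \mathbf{x}\rangle \;=\; \frac{1}{2\ell-1}\sum_{i\in C}\lambda_i \;\leq\; \frac{\ell d - (2\ell+1)}{2\ell-1} \;\leq\; \frac{\ell d}{2\ell-1},\]
where the first inequality is Lemma \ref{odd cycle}. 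The main obstacle is the structural claim that every $v \notin C$ has at most two neighbors in $C$; once that case-analysis is finished, the LP verification is essentially one line.
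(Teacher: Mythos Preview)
Your proof is correct and follows essentially the same approach as the paper: the same feasible point $x_i=\tfrac{1}{2\ell-1}$ on $C$ and $0$ elsewhere, the same structural fact that every vertex has at most two neighbours on $C$ (which you spell out more carefully than the paper does), and the same appeal to Lemma~\ref{odd cycle} for the objective bound. One minor remark: invoking Lemma~\ref{at least 2} here is slightly misplaced since the theorem is a pure LP statement about $\zeta$ rather than about $\psi$, but the paper's own argument likewise tacitly needs $\lambda_i\ge 1$ for $i\in C$, so this is not a substantive divergence.
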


\begin{proof} We define $\mathbf{x}$ as follows. Set $x_i=\frac{1}{2\ell-1}$ if $i\in C$, and $x_i=0$ otherwise. We will show that $\mathbf{x}$ satisfies the inequalities in \eqref{cond} and that $\langle \pmb{\lambda},\mathbf{x}\leq \rangle \frac{\ell}{2\ell-1} d$.  Every vertex $i\in [k]$ is connected to at most $2$ vertices of $C$, otherwise we would find a shorter odd cycle. Thus, for each $i\in [k]$ we have \[\lambda_ix_i+\sum_{j, (i,j)\not\in E} x_i\geq \sum_{j\in C, (i,j)\not\in E} x_i\geq \frac{2\ell-1}{2\ell-1}=1.\]
We also have \[\sum_{i\in [k]}\lambda_ix_i=\sum_{i\in C}\lambda_ix_i=\frac{1}{2\ell-1}\sum_{i\in C}\lambda_i\leq \frac{\ell}{2\ell-1}d-\frac{(2\ell+1)}{2\ell-1},\] where the last equality follows from Lemma~\ref{odd cycle}.
\end{proof}

We also prove a bound in terms of the girth of $G$.

\begin{theorem}\label{girth} Assume that $\pmb{\lambda}\geq 2$. If $G=([k],E)$ is realisable in $\mathbb{R}^d$ with profile $\pmb{\lambda}$ and the girth of $G$ is $2\ell+1$ with $\ell>1$, then \[\zeta(G,\mathbf{\lambda})\leq \frac{d}{2}.\] If the girth of $G$ is $2\ell$ with $\ell>2$, then \[\zeta(G,\mathbf{\lambda})\leq \frac{\ell}{2\ell-1}d.\]
\end{theorem}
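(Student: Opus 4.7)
The plan is, in each case, to exhibit an explicit feasible solution $\mathbf{x}$ to the LP~\eqref{total}--\eqref{cond} whose support is a girth-realising cycle $C$ of $G$, and then to bound $\langle\pmb{\lambda},\mathbf{x}\rangle$. Two structural properties of $C$, both following from the girth hypothesis by short parity/arc-length arguments, will be used throughout: (i) $C$ is chordless, since any chord would split $C$ into two smaller cycles whose lengths sum to $|C|+2$, and a brief parity check shows that one of them would be shorter than the girth; and (ii) every vertex $i\notin C$ has at most one neighbour in $C$, since two neighbours $j_1,j_2\in C$ would close a cycle of length $a+2$ (where $a\le|C|/2$ is the shorter arc between $j_1$ and $j_2$), contradicting the girth assumption (this uses $\ell\ge 2$ in the odd case and $\ell\ge 3$ in the even case).

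For the odd girth case $|C|=2\ell+1$ with $\ell\ge 2$, I would set $x_i=1/(2\ell)$ for $i\in C$ and $x_i=0$ otherwise. By (i), $|V_i\cap C|=2\ell-2$ for $i\in C$, so \eqref{cond} reduces to $(\lambda_i+2\ell-2)/(2\ell)\ge 1$, which holds since $\lambda_i\ge 2$; by (ii), $|V_i\cap C|\ge 2\ell$ for $i\notin C$, so \eqref{cond} becomes $2\ell/(2\ell)=1$. Applying Lemma~\ref{odd cycle} then yields
\[
\langle\pmb{\lambda},\mathbf{x}\rangle \;=\; \frac{1}{2\ell}\sum_{i\in C}\lambda_i \;\le\; \frac{\ell d-(2\ell+1)}{2\ell} \;<\; \frac{d}{2}.
\]

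For the even girth case $|C|=2\ell$ with $\ell\ge 3$, I would set $x_i=1/(2\ell-1)$ on $C$ and $0$ elsewhere; the feasibility check goes through in exactly the same way (again using $\lambda_i\ge 2$, now via $(\lambda_i+2\ell-3)/(2\ell-1)\ge 1$ for $i\in C$ and $(2\ell-1)/(2\ell-1)=1$ for $i\notin C$). To bound the objective, I would apply Lemma~\ref{lem clique condition} to each of the $2\ell$ edges of $C$ viewed as a clique of size $2$, obtaining $\lambda_i+\lambda_j\le d-2$; summing over the edges of $C$ and dividing by $2$ gives $\sum_{i\in C}\lambda_i\le\ell(d-2)$, whence
\[
\langle\pmb{\lambda},\mathbf{x}\rangle \;\le\; \frac{\ell(d-2)}{2\ell-1} \;<\; \frac{\ell}{2\ell-1}\,d.
\]
The only real subtlety is the choice of uniform weight on $C$: the values $1/(2\ell)$ (odd case) and $1/(2\ell-1)$ (even case) are precisely those that simultaneously satisfy \eqref{cond} at a cycle vertex (which sees two cycle-neighbours, so $|V_i\cap C|$ is two smaller than at a non-cycle vertex) and at a non-cycle vertex; any smaller denominator would violate the constraint outside $C$, and any larger one would hurt the objective. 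Once this balance is spotted, the proof is essentially a two-line verification plus an invocation of Lemma~\ref{odd cycle} or Lemma~\ref{lem clique condition}.
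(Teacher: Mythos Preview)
Your proof is correct and follows essentially the same approach as the paper: support $\mathbf{x}$ uniformly on a girth-realising cycle $C$ with weights $1/(2\ell)$ (odd case) and $1/(2\ell-1)$ (even case), verify feasibility via chordlessness and the ``at most one external neighbour'' property, and bound the objective by Lemma~\ref{odd cycle} in the odd case. Your even-case objective bound via Lemma~\ref{lem clique condition} applied to the edges of $C$ (giving $\sum_{i\in C}\lambda_i\le\ell(d-2)$) is in fact more explicit than the paper's proof, which states the final inequality without justification; your argument fills this gap cleanly.
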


\begin{proof} Let $C$ be a shortest cycle in $G$.

\textbf{Proof of first part:} We define $\mathbf{x}$ as follows. Set $x_i=\frac{1}{2\ell}$ if $i\in C$ and $x_i=0$ otherwise. As each vertex $i\in C$ is connected only to its two neighbours in $C$, and $\lambda_i\geq 2$, \eqref{cond} holds for every $i\in C$. Every vertex $i\in [k]\setminus C$ is connected to at most one vertex from $C$, thus \eqref{cond} also holds for every $i\in [k]\setminus C$. Next, it follows from Lemma~\ref{odd cycle} that \[\langle {\pmb{\lambda}\mathbf{x}}\rangle=\sum_i \lambda_ix_i=\frac{1}{2\ell}\sum_{i\in C}\lambda_i\leq \frac{d}{2}.\] This shows that $\zeta(G,\mathbf{\lambda})\leq\frac{d}{2}$.

\smallskip

\textbf{Proof of second part:} We define $\mathbf{x}$ as follows. Set $x_i=\frac{1}{2\ell-1}$ if $i\in C$ and $x_i=0$ otherwise. As $C$ has no chord, $\lambda_i\geq 2$ for every $i\in [k]$, and each vertex $i\notin C$ is connected to at most one vertex of $C$, \eqref{cond} holds for every vertex. Finally, we have  \[\langle \pmb{\lambda}\mathbf{x} \rangle =\sum_i \lambda_ix_i=\frac{1}{2\ell}\sum_{i\in C}\lambda_i\leq \frac{\ell}{2\ell-1}d.\] This shows that $\zeta(G,\mathbf{\lambda})\leq\frac{\ell}{2\ell-1}d$.
\end{proof}

Before the next Lemma, we recall that a set of vectors $W\in \mathbb{R}^d$ is called \emph{almost orthogonal}, if among any three members of $W$ there are two orthogonal. It is known that the maximum cardinality of an almost orthogonal set in $\mathbb{R}^d$ is $2d$, see \cite{chains}.

\begin{lemma} If $G=([k],E)$ is realisable in $\mathbb{R}^d$ with profile $\pmb{\lambda}$ and $K$ is a subgraph of $G$ of independence number $2$, then $\sum_{i\in K}\lambda_i\leq 2d-|K|+1$.
\end{lemma}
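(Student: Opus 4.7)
The plan is to reduce the lemma to the known bound of $2d$ on the size of an almost orthogonal set in $\mathbb{R}^d$. By the discussion following Lemma~\ref{spheres}, realisability of $G$ with profile $\pmb\lambda$ produces linear subspaces $U_1,\ldots,U_k \subseteq \mathbb{R}^d$ with $\dim U_i = \min\{\lambda_i+1,d\}$ such that $(i,j) \in E$ implies $U_i \perp U_j$. For each $i \in K$ I would fix an orthonormal basis $B_i \subseteq U_i$, chosen generically so that all vectors across the different $B_i$'s are distinct. This is automatic when $(i,j) \in E$, since then $U_i \cap U_j = \{0\}$, and for non-edges it can be ensured by a small rotation of the bases within their respective subspaces.

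The key claim is that $W := \bigsqcup_{i \in K} B_i$ is an almost orthogonal set. Take any three vectors $v_1, v_2, v_3 \in W$. If two of them lie in the same $B_i$, then orthonormality of $B_i$ makes them perpendicular. Otherwise each $v_s$ belongs to a distinct $B_{i_s}$, and by the hypothesis that $K$ has independence number $2$ the triple $\{i_1, i_2, i_3\}$ cannot be an independent set of $K$, so some pair $(i_s, i_t)$ is an edge of $K$. Then $U_{i_s} \perp U_{i_t}$ and therefore $v_s \perp v_t$. Applying the almost orthogonal bound from \cite{chains} then gives
\[ \sum_{i \in K} \min\{\lambda_i + 1,\, d\} \;=\; |W| \;\le\; 2d. \]

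If $\lambda_i < d$ for every $i \in K$, this immediately yields $\sum_{i \in K}\lambda_i \le 2d - |K|$, which is stronger than the claimed bound. The $+1$ slack in the statement covers the degenerate case when some $\lambda_i = d$: such a vertex must be isolated in the subgraph $K$, since an edge $(i,j)$ would force $U_j$ to be orthogonal to $U_i = \mathbb{R}^d$, i.e.\ $U_j = \{0\}$. Then every other vertex of $K$ must be adjacent to every other one (otherwise we would get an independent triple containing $i$), so $K\setminus\{i\}$ is a clique, and Lemma~\ref{lem clique condition} gives $\sum_{j\in K\setminus\{i\}}\lambda_j \le d - (|K|-1)$. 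Adding $\lambda_i = d$ recovers the bound $2d - |K| + 1$.

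The main technical obstacle is the distinctness of the vectors in $W$, i.e.\ arranging $|W| = \sum_{i\in K}\dim U_i$ so that the $2d$ bound applies directly. The delicate sub-case is when two subspaces $U_i,U_j$ with $i\ne j$ coincide (necessarily for a non-edge), so that their orthonormal bases cannot be disjoint within a single fixed basis; but because we are free to rotate inside $U_i = U_j$ independently when choosing $B_i$ and $B_j$, we can still pick the two bases to share no common unit vector, and the almost orthogonality check above goes through without modification.
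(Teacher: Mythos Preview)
Your proof is correct and follows essentially the same route as the paper: reduce to the $2d$ bound on almost orthogonal sets by taking orthogonal bases of the associated subspaces $U_i$, and handle the isolated case $\lambda_i=d$ separately via the clique condition. You are in fact slightly more careful than the paper, which writes $|W|=\sum_{i\in K}\dim U_i$ without commenting on why the bases can be taken disjoint; your generic-rotation argument fills exactly that gap.
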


\begin{proof} First, assume that there is a $j\in K$ such that $\lambda_j=d$. Then $j$ is an isolated vertex, and $K\setminus \{j\}$ is a clique, thus by \eqref{clique condition} we obtain
\[\sum_{i\in K}=d+\sum_{i\in K\setminus \{j\}}\leq d+(d-|K|+1)=2d-|K|+1
\]

If $\lambda_i\leq d+1$ holds for every $i\in K$, then there are subspaces $U_1,\dots,U_k$ in $\mathbb{R}^d$ such that $U_i$ is of dimension $\lambda_i+1$ and if $(i,j)\in E$ then $U_i$ is orthogonal to $U_j$. For each vertex $i$ of $K$ let $W_i$ be an orthogonal basis of $U_i$. Their union $W=\cup_{i\in K}W_i$ is an almost-orthogonal set, thus $|W|\leq 2d$. This implies 
\[\sum_{i\in K}\lambda_i\leq \sum_{i\in K} (\dim U_i-1)= |W|-|K|\leq 2d-|K| \]
\end{proof}

In general, for the LP we have the following trade-off: If in the corresponding auxiliary graph a vertex $v$ has many non-neighbours, that is helpful, because $v$ gets a lot of "contribution" from other vertices to satisfy its constraint. On the other hand, if a vertex $v$ has many neigbours, then the subspaces it represents is contained in the intersection of many subspaces. Thus, in this situation the $\lambda$ value corresponding to $v$ is usually small. In the next lemma we quantify a trade-off of this type. We will use it to prove Theorem \ref{triangle}.

\begin{lemma}\label{lemma neighbour} If $G=G([k],E)$ is realisable in $\mathbb{R}^d$ with profile $\pmb{\lambda}$ and it has a vertex $i\in[k]$ of degree strictly larger than $d-\lambda_i$, then $\psi_k^{(G)}(n)=0$.
\end{lemma}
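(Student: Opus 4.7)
The plan is to rule out the existence of a unit $k$-simplex in any $(P_1,\dots,P_k)$ compatible with $G$ of profile $\pmb{\lambda}$, which is what $\psi_k^{(G)}(n)=0$ amounts to under the stated hypotheses. Suppose for contradiction that $(p_1,\dots,p_k)$ with $p_\ell\in P_\ell$ is such a simplex: its $k$ vertices are distinct and all pairwise distances equal $1$. Write $s=\deg_G(i)\ge d-\lambda_i+1$ and $N(i)=\{j_1,\dots,j_s\}$.

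The first step will be to exploit $G$-compatibility: since $(i,j_\ell)\in E$ for each $\ell$, the point $p_{j_\ell}$ lies at distance $1$ from \emph{every} point of $P_i$, not merely from $p_i$. Assuming $\lambda_i\le d-1$, I will then note that the minimality in the definition of the profile forces the affine hull of $P_i$ to equal the $(\lambda_i+1)$-dimensional affine span $F_i$ of the minimum sphere $S_i$: otherwise $P_i$ would sit in a great subsphere of $S_i$ of dimension $\lambda_i-1$, contradicting the minimality of $\lambda_i$. Consequently, the set of points equidistant from every point of $P_i$ is the perpendicular bisector flat $H$, the affine subspace through the center of $S_i$ orthogonal to $F_i$, of dimension $d-\lambda_i-1$, and every $p_{j_\ell}$ must lie in $H$.

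The decisive step is then a short dimension count. The $s$ distinct points $p_{j_1},\dots,p_{j_s}$ sit in $H$ and are pairwise at distance $1$, so they form a unit $(s-1)$-simplex, which spans an $(s-1)$-dimensional affine subspace of $H$. Hence
\[
s-1 \;\le\; \dim H \;=\; d-\lambda_i-1,
\]
i.e., $s\le d-\lambda_i$, contradicting $s>d-\lambda_i$. The boundary case $\lambda_i=d$ will be handled separately and is immediate: by convention $P_i$ lies on no sphere, so no point of $\mathbb{R}^d$ is at a common distance from every point of $P_i$, which forces $P_{j_\ell}$ to be empty for any neighbour and rules out the simplex outright.

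The only conceptual step is the passage from the pointwise distance condition $\|p_{j_\ell}-p_i\|=1$ to the collective equidistance of $p_{j_\ell}$ from \emph{all} of $P_i$ — which is exactly what $G$-compatibility provides, together with the use of profile minimality to guarantee that $\mathrm{aff}(P_i)$ really has the full dimension $\lambda_i+1$. Everything else is elementary affine geometry, so I do not anticipate any genuine obstacle.
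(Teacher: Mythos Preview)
Your proof is correct and follows essentially the same route as the paper's: both arguments place the neighbours $p_{j_1},\dots,p_{j_s}$ in the $(d-\lambda_i-1)$-dimensional affine orthogonal complement of $\mathrm{aff}(P_i)$ and then observe that a unit $(s-1)$-simplex cannot fit there if $s>d-\lambda_i$. You are slightly more explicit than the paper in justifying that $\mathrm{aff}(P_i)$ really has dimension $\lambda_i+1$ (via minimality of the profile), which is a welcome clarification.
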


\begin{proof} We may assume that the degree of $1$ is larger than $d-\lambda_1$ and its set of neighbours contain \mbox{$\{2,\dots, d-\lambda_1+2\}$.} It is sufficient to prove the following. Let $P_1,\dots, P_k$ be $k$ sets of points in $\mathbb{R}^d$ such that $|P_1|=\dots=|P_k|=n$, and let $U_i$ be the affine subspace spanned by $P_i$. If $\lambda_1=d$, then $1$ cannot have any neighbours, and the statement follows.

Thus, we may assume that $\lambda_1<d$ \mbox{(that is, $P_1$ is contained in a sphere of dimension $\lambda_1$).} Assume that there is a unit-simplex $(p_1,\dots,p_k)\in \Psi_k(P_1,\dots,P_k)$. Then the vertices $p_2,\dots,p_{d-\lambda_1+2}$ form a unit simplex with $d-\lambda_1+1$ vertices. But $\{p_2,\dots,p_{d-\lambda_1+2}\}$ is contained in the orthogonal complement of $U_1$, which is of dimension $d-\lambda_1-1$. This is a contradiction, implying $\psi_k^{(G)}(P_1,\dots,P_k)=0$.
\end{proof}

\begin{lemma}\label{indep} If $G=([k],E)$ is the empty graph, and $\max_i \lambda_i=m$, then $\zeta(G,\pmb{\lambda})\leq \frac{km}{k+m-1}$.
\end{lemma}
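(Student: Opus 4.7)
The plan is to exhibit an explicit feasible solution to the LP \eqref{total}--\eqref{cond} and verify it achieves the claimed objective value.

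First I would apply Lemma \ref{increasing lambda} to reduce to the symmetric case. Since $\lambda_i \le m$ for every $i$, defining $\pmb{\lambda'} = (m, m, \dots, m)$ gives $\pmb\lambda \le \pmb{\lambda'}$, and hence $\zeta(G, \pmb\lambda) \le \zeta(G, \pmb{\lambda'})$. So it suffices to bound the optimum of the LP associated with $G$ and the all-$m$ profile.

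Next, since $G$ is the empty graph, we have $V_i = [k]\setminus\{i\}$ for every $i$, so constraint \eqref{cond} becomes
\[
m x_i + \sum_{j\ne i} x_j \ge 1 \quad \text{for all } i \in [k].
\]
The symmetry of both the objective and the constraints suggests the symmetric ansatz $x_i = x$ for all $i$. Substituting gives $(m + k - 1) x \ge 1$, which is tight at
\[
x = \frac{1}{k + m - 1}.
\]
This choice is clearly nonnegative and satisfies every constraint with equality, so it is feasible. Evaluating the objective gives
\[
\sum_{i \in [k]} \lambda_i x_i = k \cdot m \cdot \frac{1}{k + m - 1} = \frac{km}{k + m - 1},
\]
which is the desired upper bound on $\zeta(G, \pmb{\lambda'})$, and hence on $\zeta(G, \pmb\lambda)$.

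There is no real obstacle here: once Lemma \ref{increasing lambda} allows the reduction to the homogeneous profile, the LP is completely symmetric and the balanced solution is the natural candidate. (For completeness one could additionally verify by LP duality that the symmetric solution is in fact optimal, but the lemma only asserts an upper bound on $\zeta$, so the feasibility computation above is all that is required.)
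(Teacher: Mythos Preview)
Your proof is correct and follows essentially the same approach as the paper: reduce to the all-$m$ profile via Lemma~\ref{increasing lambda}, then take the symmetric feasible point $x_i = \frac{1}{k+m-1}$ and compute the objective.
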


\begin{proof} By Lemma~\ref{increasing lambda} we may assume that $\lambda_i=m$ for every $i\in [k]$. We define $\mathbf{x}$ as follows. Set $x_i=\frac{1}{k+m-1}$ for all $i$. With this we have \eqref{cond} holds for every $i\in[k]$ and $\langle \pmb{\lambda}, \mathbf{x}\rangle =\frac{km}{k+m-1}$, thus $\zeta(G,\pmb{\lambda})\leq \frac{km}{k+m-1}$.
\end{proof}

\begin{theorem}\label{triangle} Assume that $\psi_k^G(n)\neq 0$. If $G=([k],E)$ is realisable in $\mathbb{R}^d$ with profile $\mathbf{\lambda}$, and $G$ is triangle-free, then $\zeta(G,\pmb{\lambda})\leq \frac{2}{3}d$.
\end{theorem}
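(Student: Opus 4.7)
The plan is to split into two cases based on whether $G$ contains an odd cycle.

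\textbf{Case 1: $G$ contains an odd cycle.} Since $G$ is triangle-free, the shortest odd cycle has length $2\ell+1$ with $\ell\geq 2$. Theorem~\ref{shortest odd} then yields $\zeta(G,\pmb{\lambda})\leq \ell d/(2\ell-1)$; as $\ell/(2\ell-1)$ is decreasing in $\ell\geq 1$, this bound is maximised at $\ell=2$, giving $\zeta(G,\pmb{\lambda})\leq \frac{2d}{3}$.

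\textbf{Case 2: $G$ is bipartite} with parts $A\sqcup B$ of sizes $a$ and $b$, where $k=a+b$. The hypothesis $\psi_k^G(n)\neq 0$ supplies two crucial bounds: first, because the vertices of any $k$-simplex are affinely independent in $\mathbb{R}^d$, we have $k\leq d+1$; second, by Lemma~\ref{lemma neighbour}, $\deg(i)+\lambda_i\leq d$ for every vertex $i$, and for every edge $(i,j)\in E$ the clique condition \eqref{clique condition} gives $\lambda_i+\lambda_j\leq d-2$. By Lemma~\ref{increasing lambda}, I may set each $\lambda_i$ to its maximum admissible value $d-\deg(i)$ without decreasing $\zeta$.

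I then construct a feasible LP solution of the form $x_i=\alpha$ for $i\in A$ and $x_i=\beta$ for $i\in B$. After the maximum-$\lambda$ substitution, the worst-case LP constraints, taken at vertices of maximum degree $\Delta_A$ in $A$ and $\Delta_B$ in $B$, reduce to
\[
(d+a-1-\Delta_A)\alpha+(b-\Delta_A)\beta\geq 1 \text{ and } (a-\Delta_B)\alpha+(d+b-1-\Delta_B)\beta\geq 1,
\]
and the objective becomes $\alpha(ad-e)+\beta(bd-e)$ with $e=|E|$. The plan is to solve this two-variable LP at the intersection of the two constraint lines and invoke $k\leq d+1$ to bound the objective by $\frac{2d}{3}$. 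As the sharpest sanity check, the extremal bipartite graph $K_{a,b}$ with $a+b=d+1$ and $\lambda$'s saturating the clique condition gives $\alpha=\beta=2/(2d-3)$ and objective $\frac{(d+1)(d-2)}{2d-3}$, and $3(d+1)(d-2)\leq 2d(2d-3)$ reduces to $d^{2}-3d+6\geq 0$ (whose discriminant is negative), so the bound holds with room to spare for every $d\geq 3$.

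The main obstacle is the bipartite case of girth $4$: such graphs contain no odd cycle (so Theorem~\ref{shortest odd} does not apply) and fall below the girth threshold of Theorem~\ref{girth}, so they genuinely require the direct LP analysis above. The key structural input that makes the two-variable LP tractable is $k\leq d+1$, which comes from realisability of the simplex itself and is independent of $\pmb{\lambda}$; without it, uniform LP solutions can approach $d$, exceeding $\frac{2d}{3}$.
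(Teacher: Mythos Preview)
Your Case~1 (odd cycle) matches the paper and is fine. The gap is in Case~2.

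The bipartite case is only sketched, and the sketch contains a real error. You increase each $\lambda_i$ to $d-\deg(i)$, citing Lemma~\ref{lemma neighbour} as the only obstruction. But the clique condition~\eqref{clique condition} for an edge $(i,j)$ forces $\lambda_i+\lambda_j\le d-2$, and your choice $\lambda_i'=d-\deg(i)$ typically violates this: $\lambda_i'+\lambda_j'=2d-\deg(i)-\deg(j)\le d-2$ would require $\deg(i)+\deg(j)\ge d+2$, which fails whenever $k\le d+1$ and the graph is not extremely dense. While Lemma~\ref{increasing lambda} is a pure LP statement and does not care about realisability, the point is that for the over-inflated profile $\pmb{\lambda}'$ the LP optimum can exceed $\tfrac{2}{3}d$, so bounding $\zeta(G,\pmb{\lambda}')$ tells you nothing. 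Concretely, take $G=K_{1,3}$ in $d=4$: your recipe gives $\lambda_c=1$, $\lambda_{\ell_j}=3$, and solving your two-variable LP yields $\alpha=1$, $\beta=1/5$, objective $(ad-e)\alpha+(bd-e)\beta=1+9/5=14/5>8/3=2d/3$. (For the true realisable profile, the clique condition forces $\lambda_{\ell_j}\le 1$, and then the LP value drops to $2\le 8/3$.) Your own sanity check for $K_{a,b}$ quietly switched to ``$\lambda$'s saturating the clique condition'', which is a different and incompatible normalisation.

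The paper's proof of the bipartite case does not use a uniform $\alpha/\beta$ ansatz at all. It splits again according to whether some $\lambda_i\le d/3$. If so, it sets $x_i=1$, puts carefully chosen weight on the neighbours of $i$ (using the clique condition to cap their $\lambda$'s at $d-\lambda_i-2$), and zero elsewhere; an AM--GM step plus Lemma~\ref{lemma neighbour} then gives the bound. If every $\lambda_i>d/3$, the clique condition forces any vertex with $\lambda_i\ge 2d/3$ to be isolated, and one puts weight only on the remaining (non-isolated) vertices within each colour class. Both constructions exploit the edge-level clique condition in an essential way, which your approach discarded.
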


\begin{proof} We may assume that $G$ is bipartite, otherwise we are done by Theorem~\ref{shortest odd}. We distinguish two cases.

\textbf{Case 1:} There is an $i$ such that $\lambda_i\leq \frac{d}{3}$. We define $\mathbf{x}$ as follows. Put $x_i=1$ and let $V=\setbuilder{j}{(i,j)\in E}$. By Lemma \ref{lem clique condition} and Lemma~\ref{increasing lambda} we may assume that $\lambda_j=d-\lambda_i-2$ for every $j\in V$. For $j\in V$ set $x_j=\frac{1}{|V|+(d-\lambda_i-2)-1}$, and for $j\notin V'$ set $x_j=0$. With this choice of $\mathbf{x}$ condition \eqref{cond} holds for vertex $i$. If $j\notin V$, then $j$ is not connected to $i$, thus \eqref{cond} is satisfied for $j$. If $j\in V$ then \[\lambda_j x_j+\sum_{r,(r,j)\notin E}x_r= \frac{|V|-1+(d-\lambda_i-2)}{|V|+(d-\lambda_i-2)-1}=1.\]
Further, we have 
\begin{multline*}\langle \pmb{\lambda}, \mathbf{x} \rangle=\lambda_i+\sum_{j\in V}\lambda_jx_j=\lambda_i+\frac{|V|(d-\lambda_i-2)}{|V|+d-\lambda_i-2-1}
\leq \lambda_i+\frac{1}{4}\frac{(|V|+d-\lambda_i-2)^2}{|V|+d-\lambda_i-3} \\ 
\leq \lambda_i+\frac{1}{4}(|V|+d-\lambda_i)\leq\lambda_i+\frac{1}{4}(d-2\lambda_i+d)\leq \frac{2}{3}d
\end{multline*}
where the first inequality follows from the the inequality between the arithmetic and geometric means, the second is a simple calculation, the third follows from Lemma \ref{lemma neighbour}, and the fourth from the assumption $\lambda_i\leq \frac{d}{3}$.

\smallskip

\textbf{Case 2:} $\lambda_i> \frac{d}{3}$ for every $i\in[k]$. Note that in this case if $\lambda_i\geq  \frac{2}{3}d$, then $i$ is an isolated vertex. Let $L_1$ and $L_2$ be the set of those vertices in the two colour classes for which $\lambda_i<\frac{2}{3}d$, and let $\ell_1=|L_1|$, $\ell_2=|L_2|$. By Lemma~\ref{increasing lambda} we may assume that $\lambda_i=\frac{2}{3}d$ if $i\in L_1\cup L_2$. If $L_1\cup L_2=\emptyset$, then $E=\emptyset$, and we are done by Lemma~\ref{indep}. Otherwise, we define $\mathbf{x}$ as follows. Set $x_i=0$ for all $i$ with $\lambda_i\geq\frac{2}{3}d$. Set $x_i=\frac{1}{\ell_j+\frac{2}{3}d-1}$ if $x_i\in L_j$ for $j=1,2$. A calculation, similar to the one used in the first case, shows that $\sum \lambda_ix_i\leq \frac{2}{3}d$.
Further, by increasing some of the $x_i$'s if necessary, we may assume that equality holds.

The inequalities in \eqref{cond} hold for $i\in L_1$ as \[\lambda_ix_i+\sum_{(i,j)\notin E} x_j\geq \lambda_ix_i+\sum_{j\in L_1} x_j=\frac{\frac{2}{3}d}{\ell_1+\frac{2}{3}d-1}+\frac{\ell_1-1}{\ell_1+\frac{2}{3}d-1}=1.\]
To check it when $i\notin L_1\cup L_2$, in other words when $i$ is an isolated vertex, we have to show that $\sum x_j\geq 1$.  This follows from \[\sum_{j,\lambda_j\leq \frac{2}{3}d}\lambda_jx_j=\frac{2}{3}d.\]
\end{proof}

Finally, we prove that if $k=d$ and there is a solution such that all constraints in \eqref{cond} are satisfied with equality, then the solution to the LP is at most $d/2$.

\begin{theorem}\label{equality}
Let $G=([k],E)$ is realisable in $\mathbb{R}^d$ with profile $\mathbf{\lambda}$. Assume that $\psi_k^G(n)\neq 0$ and that there is an $\mathbf{x}$ for which every constraint in \eqref{cond} is satisfied with equality. Then for $k=d+1$ we have $\zeta(G,\pmb{\lambda})\leq \frac{d}{2}+\frac{1}{2}$.
\end{theorem}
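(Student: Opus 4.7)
The plan is to show directly that the given tight solution $\mathbf{x}$ has $\langle \pmb{\lambda},\mathbf{x}\rangle \leq (d+1)/2$, which immediately gives the bound on $\zeta(G,\pmb{\lambda})$. Since $k=d+1$, the target $(d+1)/2 = k/2$ has a natural ``averaging'' flavour, and I will aim to prove it by summing the tight equations and using one geometric input, namely Lemma~\ref{lemma neighbour}.

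The first step is the summation trick. Adding the $k$ tight constraints gives
\[
\sum_{i\in[k]}\lambda_i x_i + \sum_{i\in[k]}\sum_{j\in V_i} x_j = k.
\]
Swapping the order of summation in the double sum and using the symmetric observation $j\in V_i \Leftrightarrow i\in V_j$, the inner double sum becomes $\sum_{j\in[k]}(k-1-\deg_G(j))x_j$. Hence
\[
\sum_{j\in[k]}\lambda_j x_j + \sum_{j\in[k]}\bigl(k-1-\deg_G(j)\bigr)x_j = k.
\]

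The second step extracts the geometric content. Since $\psi_k^{(G)}(n)\neq 0$, Lemma~\ref{lemma neighbour} applied to every vertex gives $\deg_G(j)\leq d-\lambda_j$ for all $j\in[k]$. Using $k=d+1$, this is exactly $k-1-\deg_G(j)\geq \lambda_j$. Combined with $x_j\geq 0$ (feasibility of $\mathbf{x}$), each summand in the second sum dominates the corresponding summand in the first, so
\[
\sum_{j\in[k]}\bigl(k-1-\deg_G(j)\bigr)x_j \;\geq\; \sum_{j\in[k]}\lambda_j x_j.
\]
Substituting into the identity from the first step yields $2\sum_j \lambda_j x_j \leq k = d+1$, i.e.\ $\langle\pmb{\lambda},\mathbf{x}\rangle\leq \tfrac{d+1}{2}$. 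Since $\mathbf{x}$ is feasible, $\zeta(G,\pmb{\lambda}) \leq \langle \pmb{\lambda},\mathbf{x}\rangle \leq \tfrac{d}{2}+\tfrac{1}{2}$, as required.

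There is no real obstacle here beyond correctly identifying Lemma~\ref{lemma neighbour} as the right geometric input. The only subtlety is that the LP is stated with the open-ended constraint $\lambda_i x_i + \sum_{j\in V_i}x_j\geq 1$, and the hypothesis asserts these can be made tight for some feasible $\mathbf{x}$; this is precisely what allows the ``sum of equations'' identity and hence the clean bound $k/2$. One should also verify that the hypothesis $\mathbf{x}\geq 0$ really is part of the feasibility requirement referenced in the theorem; given how the linear program is set up in Section~\ref{sec6}, this is the natural reading.
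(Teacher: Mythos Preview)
Your proof is correct and follows essentially the same approach as the paper's: sum the tight constraints, swap the order of summation to get $\sum_j(k-1-\deg_G(j))x_j$, invoke Lemma~\ref{lemma neighbour} to get $\deg_G(j)\le d-\lambda_j$, and then substitute $k=d+1$ to obtain $2\langle\pmb{\lambda},\mathbf{x}\rangle\le k$. The only cosmetic difference is that the paper keeps the inequality in the form $(k-d-1)\sum_i x_i + 2\sum_i\lambda_i x_i \le k$ for general $k$ before specialising to $k=d+1$, whereas you specialise slightly earlier.
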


\begin{proof}
Let $\mathbf{x}$ be such that the inequalities in \eqref{cond} hold with equality for every $i\in [k]$. By Lemma \ref{lemma neighbour} every $i\in [k]$ has at most $d-\lambda_i$ neighbours. That is, every $i$ is not connected to at least $k-d+\lambda_i-1$ vertices. This means, that for each $i\in[k]$ there are at least $k-d+\lambda_i-1$ vertices $j\ne i$, where $x_i$ is counted in \[\lambda_jx_j+\sum_{(j,\ell)\notin E}x_j\]

By summing up all inequalities in \eqref{cond}, which hold now with equality, we obtain
\[(k-d-1)\sum_i x_i+\sum_i 2\lambda_ix_i\leq k,\]
and by rearranging
\[\sum_i\lambda_ix_i\leq\frac{1}{2}\left ( k+(d+1-k)\sum_i x_i \right ).\]
This in the case of $k=d+1$ would give $\sum_i\lambda_ix_i\leq \frac{d}{2}+\frac{1}{2}$.

\end{proof}

\vskip+0.2cm

{\sc Acknowledgments. } We would like to thank Alexandr Polyanskii for many interesting discussions related to Theorems~\ref{unitsimplices} and~\ref{diam simplices}, as well as the LP described in Section~\ref{sec6}. We also thank L\'aszl\'o V\'egh for helpful comments on the presentation of the paper.

\section{Appendix 1: Cuttings}
In this section, we overview the proofs of cutting results. As we mentioned earlier, we decided to include this overview because, although most cutting results we use appeared in the literature before, they are rather implicit in some cases.

We start with the probabilistic part of the argument that can be presented in an abstract setting of configuration spaces. In Section~\ref{sec22} we introduce its geometric counterparts, called decomposition schemes. In Section~\ref{sec23} we state and partially derive the results on decomposition schemes in the related geometric scenarios. In Section~\ref{sec24} we put everything together, stating and proving the necessary cutting results.

\subsection{Cuttings in abstract spaces}\label{sec21}
Cuttings, as used in geometric context, give certain structures that are amenable to divide-and-conquer techniques, via probabilistic arguments. It turns out that we can obtain the properties necessary for that while working in 
an abstract framework of {\it configuration spaces}, in which the probabilistic nature of the arguments depend on some formal properties of these spaces. Geometry comes into play later, when we want to make sure that these properties are satisfied. 
Our exposition in this subsection follows Mulmuley \cite{MUL}.

Let $N$ be an abstract universe of objects. A {\it configuration} $\tau$ over $N$ is a pair $(D,L) = (D(\tau),L(\tau))$, where $D$ and $L$ are disjoint. The objects in $D$ are called {\it triggers} (or objects that define $\tau$) and objects in $L$ are called {\it stoppers} (or objects that conflict with $\tau$). The {\it degree $d(\tau)$} is the cardinality of $D(\tau)$ and the {\it level}, denoted $l(\tau)$, is the cardinality of $L(\tau)$. For a positive integer $C$, a {\it $C$-configuration space} $\Pi(N)$ over the universe $N$ is just a multiset of configurations with the following property. 

\begin{itemize}
  \item[(P)] The degree of each configuration in $\Pi(N)$ is at most $C$ and the number of configurations sharing the same trigger set is at most $C$.
\end{itemize}
For any $R\subset N$ and a configuration $\tau =(D,L)$ define $\tau|_R:=(D, L\cap R)$. Also, define $\Pi(R)$ as follows: \[\Pi(R):=\{\tau|_R: \tau\in \Pi(N), D(\tau)\subset R\}.\]

For a subset $R\subset N$, we say that a configuration $\tau$ is {\it active over $R$} if $D(\tau)\subset R$ and $L(\tau)\cap R = \emptyset$, i.e., the configuration $\tau|_R$ has level $0$ in $\Pi(R)$.
\begin{theorem}\label{thmcut1}
  Let $d$ be an integer and let $\Pi(N)$ be any $d$-configuration space, $|N| = n$.  Let $R$ be a random sample of $N$ of size $r$. With probability greater than $1/2$, for each active configuration $\tau$ over $R$ we have $|L(\tau)|\le c\frac nr\log r$ for some absolute constant $c>0$.
\end{theorem}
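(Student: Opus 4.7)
The plan is to establish this by a classical first-moment argument in the Clarkson--Shor style, followed by Markov's inequality. I would fix a threshold $T := c(n/r)\log r$, where $c$ is a constant (depending on $d$) to be chosen at the end, and let $X$ denote the number of configurations $\tau \in \Pi(N)$ that are active over $R$ and satisfy $l(\tau) > T$. The goal is then to show $\mathbb{E}[X] < 1/2$, so that by Markov the event $\{X \geq 1\}$, which is exactly the failure event in the theorem, has probability less than $1/2$.

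To bound $\mathbb{E}[X]$, I will estimate, for each fixed configuration $\tau$ with $d(\tau) = i \leq d$ and $l(\tau) = \ell \geq T$, the probability that $R$ contains all of $D(\tau)$ and misses all of $L(\tau)$. Conditioning on $D(\tau) \subseteq R$ and then choosing the remaining $r-i$ elements uniformly from $N \setminus D(\tau)$, this probability factors as
\[
\frac{\binom{n-i}{r-i}}{\binom{n}{r}}\cdot\frac{\binom{n-i-\ell}{r-i}}{\binom{n-i}{r-i}} \le \left(\frac{r}{n}\right)^{i}\prod_{j=0}^{r-i-1}\!\left(1-\frac{\ell}{n-i-j}\right)\le \left(\frac{r}{n}\right)^{i}\exp\!\left(-\frac{\ell(r-i)}{n-i}\right).
\]
For $r \ge 2d$ (which I may assume, since the conclusion is trivial for small $r$) one has $(r-i)/(n-i) \ge r/(2n)$, so the exponent is at most $-\ell r/(2n)$; combined with $\ell \ge T$ this yields the clean bound $(r/n)^{i} r^{-c/2}$.

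Now I would union-bound over configurations using property~(P): the number of configurations in $\Pi(N)$ whose trigger set has any given size $i$ is at most $C\binom{n}{i}$, hence
\[
\mathbb{E}[X] \le \sum_{i=0}^{d} C\binom{n}{i}\left(\frac{r}{n}\right)^{i} r^{-c/2} \le C'\sum_{i=0}^{d} r^{\,i-c/2} \le C''\, r^{\,d-c/2}.
\]
Choosing $c$ large enough in terms of $d$ (say $c = 2d+4$) forces $\mathbb{E}[X] < 1/2$ once $r$ exceeds some absolute constant; the remaining range of small $r$ is handled by further inflating $c$, since the inequality $|L(\tau)| \le c(n/r)\log r$ is trivially true once $c \ge n/\log r$.

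The main subtlety I anticipate is handling sampling without replacement cleanly, since the conditional-probability factorisation must remain a genuine upper bound after each approximation step; the telescoping exponential estimate is precisely what is needed. A workable alternative, if one prefers to avoid the hypergeometric bookkeeping, is to pass to a Bernoulli$(r/n)$ sample and recover the fixed-size model by conditioning on the (tightly concentrated) sample size, which only costs a constant factor in the final probability bound.
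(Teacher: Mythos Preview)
Your proof is correct and follows essentially the same first-moment (Clarkson--Shor) argument as the paper: both condition on $D(\tau)\subseteq R$, bound the conditional probability of $L(\tau)\cap R=\emptyset$ by $e^{-l(\tau)r/(2n)}$ under the assumption $r\ge 2d$, and then sum over configurations using property~(P). The only cosmetic difference is that the paper packages $\sum_\tau q(\tau,r)$ as the expected number of configurations with trigger set inside $R$ (bounded by $d\sum_{i\le d}\binom{r}{i}$), whereas you count configurations directly via $d\binom{n}{i}$ and absorb the extra $(r/n)^i$ factor---the arithmetic is the same.
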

Most importantly, Theorem~\ref{thmcut1} guarantees the existence of such a sample $R$.

\begin{proof} The statement is only non-trivial for sufficiently large $r$ only, and thus we assume that $r\ge 2d$.
Take a random sample $R$ as in the statement. Let $p(\tau,r)$ denote the conditional probability that $R$ contains no point in conflict with $\tau$, given that it contains the points defining $\tau$. Note that it is simply equal to the fraction of $r$-subsets of $N$ containing $D(\tau)$ that are disjoint with $L(\tau)$. We have
\begin{small}\[p(\tau,r) = \frac{{n-d(\tau)-l(\tau)\choose r-d(\tau)}}{{n-d(\tau)\choose r-d(\tau)}} = \prod_{i=0}^{r-d(\tau)-1}\frac{(n-d(\tau)-l(\tau)-i)}{(n-d(\tau)-i)}\le \Big(\frac{n-l(\tau)}{n}\Big)^{r-d(\tau)}\le e^{-\frac{l(\tau)(r-d(\tau))}{n}}\le e^{-\frac{l(\tau)r}{2n}}.\]
\end{small}
In the last inequality we have used that $r\ge 2d\ge 2d(\tau)$. If we take $c$ from the statement to be bigger than $4d$, then for each $\tau$ with $l(\tau)\ge c\frac nr\log r$ we have
\[p(\tau,r)\le e^{-\frac{4d\frac nrr\log r }{2n}} = e^{-2d\log r} = r^{-2d}.\]

Let $q(\tau,r)$ denote the probability that $R$ contains all elements defining $\tau$. The probability that $\tau$ is active over the random sample $R$ is precisely $p(\tau,r)q(\tau,r)$ by the law of conditional probability. Hence, the probability that some $\tau$ with $l(\tau)\ge \frac{c n\log r}r$ is active over the random sample $R$ is
\[\sum_{\tau\in \Pi: l(\tau)>\frac{c n\log r}r}p(\tau,r)q(\tau,r)\le \sum_{\tau\in \Pi: l(\tau)>\frac{c n\log r}r}r^{-2d} q(\tau,r)\le r^{-2d}\sum_{\tau\in \Pi}q(\tau,r).\]
Note that $\sum_{\tau\in \Pi}q(\tau,r)$ is simply the expected number of the configurations whose trigger sets are fully contained in $R$. Due to (P) the total number of configurations in $\Pi(R)$ is at most $d\sum_{i=0}^d{r\choose i}\le d(r^d+1)<\frac 12 r^{2d}$ for any $r\ge 4$ and $d\ge 1$. Therefore, $\sum_{\tau\in \Pi}q(\tau,r)<\frac 12r^{2d}$ and we get that the last displayed expression is smaller than $\frac 12$.
\end{proof}

{\bf Remark. } We are going to content ourselves with Theorem~\ref{thmcut1} in order to keep the presentation as clean and short as possible. However, in many situations it is possible to get rid of the extra log factor in the applications of this  result by doing a two-stage sampling. Informally, we apply Theorem~\ref{thmcut1} and then apply it again to each of the configurations that have large levels. 

Exactly the same proof allows for the following variant of the theorem.

\begin{theorem}\label{thmcut2}
  Let $d,k$ be integers and let $\Pi_1(N_1),\ldots, \Pi_k(N_k)$ be any $d$-configuration spaces, $|N_i| = n_i$.  Let $R = R_1\cup\ldots\cup R_k$ be a random sample, where for each $i\in[k]$ the set $R_i$ is chosen uniformly at random from all $r$-element subsets of $N_i$. Then with probability greater than $1/2$, for each $i\in[k]$ and each active configuration $\tau$ over $R_i$ we have $|L(\tau)|\le c\frac {n_i}r\log r$ for some absolute constant $c>0$, depending on $d$ and $k$ only.
\end{theorem}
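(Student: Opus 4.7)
The plan is to reduce Theorem~\ref{thmcut2} to $k$ parallel applications of the argument behind Theorem~\ref{thmcut1}, with a union bound absorbing the factor $k$ into the constant. Since the random samples $R_1, \ldots, R_k$ are independent (and, in fact, for the conclusion it suffices that each $R_i$ is drawn uniformly from the $r$-subsets of $N_i$), we can analyse each configuration space $\Pi_i(N_i)$ on its own, provided that the failure probability for each fixed $i$ is pushed below $1/(2k)$ rather than $1/2$.

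First, I would revisit the calculation in the proof of Theorem~\ref{thmcut1} with a larger constant $c_0 = c_0(d,k)$ in place of $c$. Exactly as before, assuming $r \ge 2d$, for any configuration $\tau \in \Pi_i(N_i)$ with $l(\tau) \ge c_0 (n_i / r) \log r$ we obtain
\[
p(\tau,r) \le e^{-l(\tau)(r-d(\tau))/n_i} \le e^{-(c_0/2)\log r} = r^{-c_0/2}.
\]
Summing $p(\tau,r)q(\tau,r)$ over all $\tau \in \Pi_i(N_i)$ and using property (P) to bound $\sum_\tau q(\tau,r) \le \tfrac{1}{2} r^{2d}$ as in Theorem~\ref{thmcut1}, we find that the probability that $\Pi_i(R_i)$ contains an active configuration of level exceeding $c_0 (n_i/r)\log r$ is at most $\tfrac{1}{2} r^{2d - c_0/2}$.

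Next, I would pick $c_0 := 4d + 2\log_2(2k)$, which depends only on $d$ and $k$. For any $r \ge 2$ this gives $r^{2d - c_0/2} \le 2^{-\log_2(2k)} = 1/(2k)$, so the failure probability for each fixed $i$ is at most $1/(2k)$. A union bound over $i \in [k]$ then bounds the probability that the conclusion fails for some $i$ by $k \cdot \frac{1}{2k} = \frac{1}{2}$, yielding the theorem with constant $c = c_0$.

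There is essentially no serious obstacle; the only subtlety is bookkeeping the constant to ensure it depends on $d$ and $k$ alone, not on $r$ or the $n_i$. For very small $r$ (say $r < 2d$) the statement is either vacuous or can be handled by inflating $c$, so the conclusion can be stated uniformly for all $r$. Independence of the samples $R_i$ is convenient but not essential: the union bound only uses marginal distributions, so the same argument goes through even if the $R_i$ are coupled arbitrarily.
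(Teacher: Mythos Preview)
Your proposal is correct and matches the paper's approach: the paper simply states that ``exactly the same proof'' as for Theorem~\ref{thmcut1} yields Theorem~\ref{thmcut2}, and what you have written is precisely that proof spelled out, with the constant enlarged to absorb the union bound over the $k$ spaces. The bookkeeping on $c_0$ is fine (indeed your bound on the per-$i$ failure probability is $\tfrac14 k^{-1}$, even better than the $\tfrac12 k^{-1}$ you quote), and your remarks on small $r$ and on the irrelevance of independence between the $R_i$ are accurate.
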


\subsection{Configuration spaces in geometry: decomposition schemes}\label{sec22}
In order to see geometry in the abstract setting of configuration spaces, one should draw the following parallel: \begin{itemize}
\item the universe $N$ is the set of surfaces (or other objects) forming an arrangement
\item configurations are cells to which we restrict in divide-and-conquer
\item for a configuration (cell) $\tau$ the set $D(\tau)$ is its defining set, typically, the set of surfaces that bounds $\tau$ or is a subset of it.
\item  the set $L(\tau)$ is the set of surfaces that crosses $\tau$.
\end{itemize}

Typically, however, we want some extra properties from configuration spaces that we are going to describe below. It seems to be difficult to treat this situation in great generality, therefore, from now on we restrict to the arrangements that are defined by collections of polynomials in $\R^d$. Put $M:=\{f_1,\ldots, f_n\}$ to be one such collection. For each polynomial $f_i$ we denote by $\sigma_i$ the surface determined by $f_i$, and let $N:=\{\sigma_1,\ldots, \sigma_n\}$ 

A {\it sign pattern } for a point $\mathbf x\in \R^d$ is the sequence $\big({\rm sgn}(f_1(\mathbf x)), \ldots, {\rm sgn}(f_n(\mathbf x))\big)$, where for a real number $y$ we have \[{\rm sgn}(y) = \begin{cases}
                                                      1, & \mbox{if } y>0 \\
                                                      0, & \mbox{if } y=0 \\
                                                      -1, & \mbox{if } y<0.
                                                    \end{cases}\]
We say that a set $S\subset \R^d$ {\it has a fixed sign pattern} if all of its points share the same sign pattern.

For a set $Q\subseteq \mathbb{R}^d$ a {\it decomposition scheme} for $N$ restricted to $Q$ is a function $T$ that assigns to each subfamily $R\subset N$ a collection of disjoint semialgebraic sets $T(R)\subseteq \mathbb{R}^d$, such that each member of $T(R)$ has a fixed sign pattern and $Q\subseteq\cup T(R)$. 
For a subset $R\subset N$ and a cell $\Delta\in T(R)$ let $D(\Delta)\subset N$ be the \emph{set of triggers} for $\Delta$, such that each $\sigma\in D(\Delta)$ either contains or is disjoint with $\Delta$. Let $L(\Delta)\subset N$ be the set of stoppers of $\Delta$ that consists of {\it all} $\sigma\in N$ that cross $\Delta$. 

We note that $D(\Delta)$ and $L(\Delta)$ are disjoint, and thus the collections of pairs $(D(\Delta),L(\Delta))$ over all $R$ and $\Delta\in T(R)$ is a configuration space, which we denote by $\Pi(N)$. Formally, we define a map $\chi$ that corresponds to a pair $(R,\Delta)$ with $\Delta\in T(R)$ the pair $(D(\Delta),L(\Delta))$.  Note that the same `geometric' cell $\Delta$ may correspond to different `combinatorial' configurations, i.e., different pairs of sets $(D(\Delta),L(\Delta))$, if it is present in decompositions for different subsets $R_1,R_2\subset N$. This, however, does not affect any of the arguments.

For a constant $C$ and a function $g:\mathbb{R}_{> 0}\to \mathbb{R}_{>0}$,\footnote{we will only use functions of the form $g(r) = r^t$ for some positive $t$} we say that a decomposition scheme $T$ for $Q$ is 
$(C,g(r))$-\emph{canonical} if 
\begin{enumerate}
\item[(C1)] The configuration space $\Pi(N)$ satisfies (P) with $C$.

\item[(C2)] For each $R\subset N$ the collection $T(R)$ of cells maps by $\chi$ bijectively onto the configurations that are active in $\Pi(R)$.
\item[(C3)] For every $R\subset N$ we have $|T(R)|\leq Cg(|R|)$.
\end{enumerate}

We can carry over Theorems~\ref{thmcut1},~\ref{thmcut2} to the setting of decomposition schemes.

\begin{thm}\label{thmdecomp1}
Let $N:=\{\sigma_1,\ldots, \sigma_n\}$ be a collection of polynomial surfaces in $\R^d$, let $Q\subset R^d$ and let $T$ be a $(C,g(r))$-canonical decomposition scheme for $N$ restricted to $Q$ (here, $C>0$ is some constant and $g(r)$ is a function from $\R$ to $\R$). Fix some integer $r\ge 1$ and let $R$ be a random sample from $N$ of size $r$. With probability greater than $1/2$, each cell from $T(R)$ is crossed by at most $c\frac nr\log r$ surfaces from $N$ for some absolute constant $c>0$. Moreover, the total number of cells in $T(R)$ is at most $C g(r).$
\end{thm}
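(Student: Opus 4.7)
The plan is to transfer the probabilistic bound from Theorem~\ref{thmcut1} on abstract configuration spaces to the geometric setting of decomposition schemes, using the three defining properties (C1)--(C3) as the translation dictionary. No new ideas are needed beyond careful bookkeeping.

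First I would set up the configuration space. By (C1), the associated space $\Pi(N)$ (whose elements are the pairs $(D(\Delta),L(\Delta))$ obtained by ranging $R$ over all subsets of $N$ and $\Delta$ over $T(R)$) is a $C$-configuration space in the sense of Section~\ref{sec21}. In particular, every configuration has degree at most $C$ and at most $C$ configurations share a trigger set, so Theorem~\ref{thmcut1} applies with $d=C$. Applying it to the random sample $R\subset N$ of size $r$ yields: with probability greater than $1/2$, every configuration $\tau\in \Pi(N)$ that is active over $R$ satisfies $|L(\tau)|\le c\frac{n}{r}\log r$, where $c$ depends only on $C$.

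Next I would invoke (C2) to pull this bound back to the geometric cells. By (C2), the map $\chi$ restricted to $T(R)$ is a bijection onto the configurations that are active in $\Pi(R)$; equivalently, onto the configurations $\tau\in\Pi(N)$ whose triggers lie in $R$ and whose stoppers avoid $R$. Under $\chi$, a cell $\Delta\in T(R)$ corresponds to $(D(\Delta),L(\Delta))$, and by the very definition of $L(\Delta)$ in the decomposition scheme, $|L(\Delta)|$ is exactly the number of surfaces in $N$ that cross $\Delta$. Hence the high-probability event given by Theorem~\ref{thmcut1} translates directly to the statement that every cell of $T(R)$ is crossed by at most $c\frac{n}{r}\log r$ surfaces of $N$.

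Finally, the bound on $|T(R)|$ is immediate from (C3): for \emph{every} $r$-subset $R$ of $N$ we have $|T(R)|\le Cg(r)$, so in particular this holds for the random sample on the good event. Combining the two conclusions gives the theorem. The main (and only mild) obstacle is keeping straight that the same geometric cell $\Delta$ may arise from several different subsets $R$ and correspond to distinct combinatorial configurations; (C2) is precisely the condition that guarantees that on a fixed sample $R$ the cells of $T(R)$ and the active configurations are in bijection, which is what allows the probabilistic bound to be applied cell-by-cell without overcounting.
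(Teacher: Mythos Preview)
Your proposal is correct and follows the same approach as the paper's proof: invoke (C1) to apply Theorem~\ref{thmcut1} to $\Pi(N)$, use (C2) to identify the cells of $T(R)$ with the active configurations and hence transfer the bound on $|L(\Delta)|$, and read off the cell count from (C3). If anything, your write-up is more explicit than the paper's, which simply says the proof amounts to ``unveiling the definitions.''
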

\begin{proof}
The proof is simply by unveiling the definitions.  The part of the statement concerning crossings follows from two facts. First each cell $\Delta\in T(R)$ corresponds to a pair $(D(\Delta),L(\Delta))$ in the configuration space $\Pi(N)$, and $L(\Delta)$ is defined to be the collection of surfaces that cross $\Delta.$ Second, the map $\chi$ between the cells of the decomposition scheme and $\Pi(N)$ obeys property (C2), and thus  Theorem~\ref{thmcut1} applied for $R$ implies that  the cells $\Delta$ from $T(R)$ satisfy the desired bound on the size of $L(\Delta)$.

The bound on the number of cells in $T(R)$ is given by (C3).
\end{proof}
In the same way, we can obtain the following theorem from Theorem~\ref{thmcut2}.
\begin{thm}\label{thmdecomp2}
Let $N_i:=\{\sigma_1^i,\ldots, \sigma_{n_i}^i\}$, $i=1,\ldots, k$, be $k$ collections of polynomial surfaces in $\R^d$, let $Q\subset R^d$ and let 
$T$ be a $(C,g(r))$-canonical decomposition scheme for $N:=N_1\cup\ldots\cup N_k$ restricted to $Q$ (here, $C>0$ is some constant and $g(r)$ is a function from $\R$ to $\R$). Fix some integer $r\ge 1$ and 
let $R = R_1\cup\ldots\cup R_k$ be a random sample, where for each $i\in[k]$ the set $R_i$ is chosen uniformly at random from all $r$-element subsets of $N_i$. Then with probability greater than $1/2$, for each $i\in[k]$ each cell from $T(R)$  is crossed by at most $c\frac {n_i}r\log r$ surfaces from $N$ for some constant $c>0$ depending on $C$ and $k$ only. Moreover, the total number of cells in $T(R)$ is at most $C g(kr).$
\end{thm}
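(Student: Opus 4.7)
The plan is to mirror the proof of Theorem~\ref{thmdecomp1}, but invoking Theorem~\ref{thmcut2} in place of Theorem~\ref{thmcut1}. For each $i \in [k]$, I would associate to the scheme $T$ a configuration space $\Pi_i$ over the universe $N_i$ by projecting configurations: to each pair $(R, \Delta)$ with $\Delta \in T(R)$, associate the configuration $(D(\Delta) \cap N_i, L(\Delta) \cap N_i)$. Since $|D(\Delta) \cap N_i| \le |D(\Delta)| \le C$ by property (C1), and the multiplicity per trigger can be controlled in terms of $C$ and $k$, each $\Pi_i$ is a $C'$-configuration space for some $C' = C'(C,k)$.

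Next, I would apply Theorem~\ref{thmcut2} to the $k$ configuration spaces $\Pi_1, \ldots, \Pi_k$ with random samples $R_1, \ldots, R_k$, each of size $r$. The conclusion is that with probability greater than $1/2$, simultaneously for every $i \in [k]$, every active configuration $\tau$ in $\Pi_i$ over $R_i$ satisfies $|L(\tau)| \le c(n_i/r)\log r$ for some constant $c$ depending on $C$ and $k$.

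To translate back to cells of $T(R)$: any $\Delta \in T(R)$ corresponds, by property (C2), to an active configuration of $\Pi(N)$ over $R = R_1 \cup \ldots \cup R_k$. Its projection $(D(\Delta) \cap N_i, L(\Delta) \cap N_i)$ is then also active over $R_i$ in $\Pi_i$, since the trigger and stopper conditions restrict coordinate-wise. Hence $|L(\Delta) \cap N_i| \le c(n_i/r)\log r$, which is the desired crossing bound. The cell count is immediate from (C3) applied to $R$, whose cardinality is at most $kr$, yielding $|T(R)| \le Cg(kr)$.

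The main subtlety I expect is verifying that the projected configuration spaces $\Pi_i$ satisfy (P) with a bounded multiplicity: in principle, two distinct configurations of $\Pi(N)$ could collapse to the same trigger set after restricting to $N_i$. However, since every trigger in $\Pi(N)$ has size at most $C$, any projected trigger in $\Pi_i$ can be extended to at most $O_{C,k}(1)$ configurations in $\Pi(N)$, inflating the multiplicity only by a $k$-dependent constant that Theorem~\ref{thmcut2} absorbs into $c$. If this bookkeeping proves awkward, one can sidestep Theorem~\ref{thmcut2} and re-run the probabilistic calculation of Theorem~\ref{thmcut1} directly: the probability that a fixed cell $\Delta$ with $|L(\Delta) \cap N_i| \ge c(n_i/r)\log r$ remains active decays as $r^{-\Omega(c)}$ (exploiting independence of $R_i$ from the other $R_j$), and a union bound over the candidate cells — whose total number is controlled via (C1) and (C3) — closes out once $c$ is taken large enough in terms of $d$ and $k$.
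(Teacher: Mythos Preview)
Your overall strategy---mirror Theorem~\ref{thmdecomp1} but invoke the $k$-family probabilistic bound---is exactly what the paper intends by its one-line remark ``In the same way, we can obtain the following theorem from Theorem~\ref{thmcut2}.'' The cell-count bound via (C3) with $|R|\le kr$ is correct.

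However, your primary route through projected configuration spaces $\Pi_i$ has a real gap. The claim that a projected trigger $D'\subseteq N_i$ extends to only $O_{C,k}(1)$ configurations in $\Pi(N)$ is false: the remaining at most $C-|D'|$ triggers may lie anywhere in $\bigcup_{j\ne i}N_j$, so there can be order $n^{C-|D'|}$ configurations of $\Pi(N)$ projecting to the same trigger set in $N_i$. Hence the projected $\Pi_i$ need not satisfy (P) with any constant independent of $n$, and Theorem~\ref{thmcut2} as stated does not apply to it.

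Your fallback is the correct fix and, once written cleanly, is short. Work with $\Pi(N)$ directly. For $\tau\in\Pi(N)$ with $|L(\tau)\cap N_i|> c\,(n_i/r)\log r$, the independence of the samples $R_j$ gives
\[
\Pr[\tau\text{ active over }R]
\;\le\;\Big(\prod_{j}\Pr[D(\tau)\cap N_j\subseteq R_j]\Big)\cdot
\Pr\big[L(\tau)\cap N_i\cap R_i=\emptyset\,\big|\,D(\tau)\cap N_i\subseteq R_i\big]
\;\le\;\Pr[D(\tau)\subseteq R]\cdot r^{-c/2},
\]
the last estimate being identical to the one in the proof of Theorem~\ref{thmcut1}. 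Now sum over $\tau$ and bound $\sum_{\tau}\Pr[D(\tau)\subseteq R]$ using property (P) for $\Pi(N)$ itself (\emph{not} for any $\Pi_i$): since $|R|\le kr$, this sum is at most $C\sum_{j\le C}\binom{kr}{j}=O_{C,k}(r^{C})$. Choosing $c$ large enough in terms of $C$ and $k$ and taking a union bound over $i\in[k]$ completes the argument.
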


\subsection{Getting canonical decomposition schemes}\label{sec23}

To prove Theorem~\ref{semialg cutting} we need the following decomposition schemes. 

\begin{thm}\label{general scheme} Fix some constant $t$. Let $N = \{\sigma_1,\ldots, \sigma_n\}$ be a collection of algebraic surfaces
of degree at most $t$ in $\mathbb{R}^d$.
\begin{enumerate}
\item \cite{clarkson, Koltun} If $d\le 4$ then for any $\varepsilon>0$ there is a $(C,r^{d+\varepsilon})$-canonical decomposition scheme for $\R^d$  with $C$ depending on $t$ only. 
\item \cite{Koltun} If $d\geq 5$ then for any $\varepsilon>0$ there is a $(C,r^{2d-4+\varepsilon})$-canonical decomposition scheme for $\R^d$ with $C$ depending on $t$ and $d$ only.
\end{enumerate}
\end{thm}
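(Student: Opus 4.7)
The plan is to take $T(R)$ to be the \emph{vertical decomposition} of the arrangement $\mathcal{A}(R)$ generated by the surfaces in $R$. Vertical decomposition is built recursively: for each cell of $\mathcal{A}(R)$ one sweeps along a chosen coordinate direction, carving it into vertical cylinders whose bases lie in a $(d-1)$-dimensional slice, and then one applies the construction recursively in dimension $d-1$. Each resulting cell $\Delta$ is a semi-algebraic set of constant description complexity: it can be specified using only $C_0 = C_0(d, t)$ surfaces of $R$ together with a constant-size Boolean formula in the signs of these surfaces, so in particular it has a fixed sign pattern relative to $R$. These $C_0$ surfaces form the trigger set $D(\Delta)$, while the stoppers $L(\Delta)$ are precisely the surfaces of $N$ that cross $\Delta$.

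First I would verify (C1) and (C2). Property (C1) follows from two facts: $|D(\Delta)| \le C_0$, and the number of distinct cells sharing a fixed trigger set is also $O_{d,t}(1)$, since once the defining surfaces are fixed there are only constantly many ways to combine them into a valid cell of a vertical decomposition. Property (C2) reduces to the local canonicity of the construction: whether $\Delta$ belongs to $T(R')$ depends only on whether $R' \supseteq D(\Delta)$ and whether no surface of $R' \setminus D(\Delta)$ crosses $\Delta$; adding a surface that either misses or contains $\Delta$ leaves the recursive construction unaltered in the vicinity of $\Delta$. Consequently the map $\chi$ is a bijection between $T(R)$ and the configurations active over $R$ in $\Pi(R)$.

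The genuinely hard part is (C3), the complexity bound $|T(R)| \le C g(r)$. For $d = 2$ this is the classical $O(r^2)$ estimate on arrangement complexity. For $d = 3$, Chazelle, Edelsbrunner, Guibas and Sharir \cite{curvecut} established the bound $O(r^{3+\varepsilon})$ on the size of the vertical decomposition; for $d = 4$ and for $d \ge 5$, Koltun \cite{Koltun} proved the much deeper bounds $O(r^{4+\varepsilon})$ and $O(r^{2d-4+\varepsilon})$ respectively. Reproducing these estimates in a self-contained way is the main obstacle: each proceeds by induction on dimension, combining zone-theorem bounds with delicate charging arguments controlling how many times a given surface can appear as a wall of a vertical cylinder, and in the high-dimensional regime the sharp form of these estimates is genuinely subtle. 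Once the bounds are quoted, combining them with the previously verified (C1) and (C2) furnishes the $(C, g(r))$-canonical decomposition scheme claimed by the theorem in both regimes.
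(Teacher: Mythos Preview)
Your proposal is correct and takes essentially the same approach as the paper: the paper does not give a self-contained proof either, but instead points to the vertical decomposition of \cite{curvecut} and \cite{Koltun}, explains (in a footnote) how to extract properties (C1) and (C2) from the constant description complexity and local canonicity of those cells, and then quotes the complexity bounds of Chazelle et al.\ and Koltun for (C3). Your sketch makes these same three steps explicit and correctly identifies that the genuinely hard content lies in the cited complexity bounds, which neither you nor the paper attempt to reprove.
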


Here, we come to the most tangled part of the story, and several comments are in order. 

Let us first dwell on the $d=2,3$ case. In \cite{clarkson}, the authors provide a cell decomposition that implies an $O_{d,t}(r^{2d-2})$ bound  on $g(r)$ for any $d\ge 2$, and only point out that the bound $O(r^{2d-3+\varepsilon})$ on $g(r)$ for $d\ge 3$ can be obtained analogously, albeit with a fair amount of technical analysis. The verification of the properties (C1), (C2) are not so easy to extract from the text, and we sketch it in the footnote\footnote{To deduce the properties (C1), (C2), we need to get a bit more into the details of \cite{curvecut}. First, for $W\subset V$ on p.98 they define $\phi(W)$ to be the collection of all cells whose defining sets are contained in $W$. In the beginning of p.97 they state that the cells used in the decomposition for a subset $W$ (among those that have their defining sets in $W$) are precisely those who correspond to edge of $H$ that is disjoint with $W$ (in terms of configuration spaces, this is the same as saying that the set $L(\tau)$ for a cell $\tau$ is disjoint from $W$, or that $\tau$ has level $0$ in $\Pi(W)$). Together, this takes care of (C2). Second, on p.98 (between the two displayed formulas) they verify  that, in their notation, the number of cells with the same defining set is a constant. Together with the fact that the cells in the decomposition are, as they call it, Tarski cells, this takes care of (C1) (or (P)).} below.
They also give a cutting result itself in Theorem~4.2, using an abstract result about `frames' that are very similar to decomposition schemes.\footnote{For an interested reader, a {\it frame} is a pair $(H,\phi)$, where $H = (V,E)$ is a hypergraph and $\phi:2^V\to 2^E$ is a map that satisfies the properties (i) $\phi(V)=E$ and (ii) $W'\subseteq W\subseteq V$ implies $\phi(W') \subseteq \phi(W)$. Here, $V$ is the same as $N$ in configuration spaces, and $E$ is the same as the set $L(\sigma)$. The defining (trigger) set is encoded into the definition of $\phi$.}

The case $d\ge 4$ is treated in Koltun's paper \cite{Koltun}. There, the proofs are much sketchier. However, the first thing to observe is that the author uses the same decomposition as Chazelle et al. \cite{curvecut}. In particular, this implies that the resulting structure is a decomposition scheme. For the upper bound on the `complexity' $g(r)$ of the decomposition scheme, the author makes the assumption that the algebraic surfaces are in general position, which is the best thought of as taking a random and tiny perturbation of the defining polynomials which eliminates all possible algebraic dependencies. Koltun's proof gives complexity bounds in this case. For the general case, Koltun refers to the paper of Sharir \cite{sharir}, in which a similar problem was treated. In his paper, Sharir sketched the argument that when bounding from above the complexity of the lower envelope of an arrangement of polynomial surfaces, it is sufficient to deal with the surfaces in general position. Similar general position assumptions are also justified in Chapter 2 of the Handbook of Computational Geometry \cite{AS2} and in \cite{general}. We also only sketch the proofs when assuming general position.

The situation with decomposition schemes for families of spheres is much cleaner and the general position is also easier to handle using simpler perturbation arguments as explained in many parts in Matousek's book \cite{matousek}. In order to obtain them, after a suitable lifting transformation we need to work with decomposition schemes for arrangements of hyperplanes. To get decomposition schemes for arrangements of hyperplanes, we can  use the \emph{bottom-vertex triangulation}, described for example in \cite{matousek}. For the sake of completeness, we explain its construction here.

In order to obtain the bottom-vertex triangulation of a hyperplane arrangement, we first rotate the arrangement so that no two vertices share the same first coordinate. We triangulate each cell of the arrangement separately, and so it is enough to describe the triangulation for a convex polytope $P$. The triangulation is constructed using induction on the dimension. 
For a $2$-dimensional face $F$ of $P$, let $v$ be the vertex of $F$ with the smallest $x_1$ coordinate. Triangulate $F$ by connecting every other vertex of $F$ to $v$. For a face $F$ of dimension larger than $2$ let $v$ be the vertex of $F$ with the smallest $x_1$ coordinate. Triangulate all proper faces of $F$ by induction. The triangulation of $F$ consists of all simplices that can be formed taking the convex hull of $v$ and a simplex in  the triangulation of one the faces of $F$. It is not hard to see that this procedure gives a triangulation of $P$. We think of each simplex in the triangulation as a {\it relatively open} cell in order to make the cells disjoint.

\begin{obs}\label{obs111} If the hyperplanes of the arrangement are in general position, then the total number of simplices in the bottom-vertex triangulation is at most $d!$ times the number of faces (of all dimensions) of $P$.
\end{obs}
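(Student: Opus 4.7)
The plan is to prove this by induction on the dimension $d$ of $P$, with the trivial base case $d=0$. For the inductive step, let $v$ be the bottom vertex of $P$ (which is a simple polytope because the defining hyperplanes are in general position). The central structural remark is that any face $F$ of $P$ that contains $v$ also has $v$ as its own bottom vertex, since $v$ has the globally smallest first coordinate; hence the recursive construction forces every simplex of $T(F)$ to contain $v$. Consequently, any simplex of $T(P)$ that does not contain $v$ must already live in $T(G)$ for some facet $G$ of $P$ with $v\notin G$; write $A$ for the set of all such simplices. Conversely, any simplex of $T(P)$ that does contain $v$ is either $\{v\}$ itself or a cone $\{v\}\cup\sigma'$ with a uniquely determined $\sigma'\in A$, and this correspondence is a bijection. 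Therefore
\[
|T(P)|=1+2|A|.
\]

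To bound $|A|$, each $\sigma'\in A$ lies in $T(G)$ for some facet $G$ of $P$ with $v\notin G$, so $|A|\le\sum_{G\text{ facet},\,v\notin G}|T(G)|\le (d-1)!\sum_{G:\,v\notin G}f(G)$ by the inductive hypothesis applied to each such facet (facets of a simple polytope are again simple and of dimension $d-1$). Because $P$ is simple, every $k$-face of $P$ lies in exactly $d-k$ facets, so a standard double-counting yields $\sum_{G\text{ facet}}f(G)=\sum_{H\text{ proper face of }P}(d-\dim H)\le d\cdot f(P)$. Substituting into the displayed identity gives $|T(P)|=O_d(f(P))$, which is the qualitative content of the observation.

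The only delicate point in this scheme is recovering the precise leading constant $d!$ rather than $2\cdot d!$; the factor of two appears because the naive facet-sum over-counts simplices of $A$ that lie in $T(G)$ for several different facets $G$ of the antistar of $v$. To eliminate this, I would re-index $|A|$ by the minimal face of $P$ containing each simplex, writing $|A|=\sum_{F\not\ni v}M(F)$ where $M(F)$ counts those $\sigma\in T(P)$ whose minimal containing face is exactly $F$, and then apply the inductive hypothesis face-by-face inside each $F$ rather than facet-by-facet; the canonical-chain description of the bottom-vertex triangulation (each simplex containing the bottom vertex $v_{F(\sigma)}$ of its minimal face) is what makes this re-indexing rigorous. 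Since the subsequent cutting arguments only use this observation in the form $|T(P)|=O_d(f(P))$, the exact leading constant is inessential, and the main conceptual obstacle is simply the careful identification of $v$-containing and $v$-avoiding simplices via the cone bijection above.
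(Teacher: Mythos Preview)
Your argument is correct as far as it goes and certainly yields $|T(P)|=O_d(f(P))$, which is all the subsequent cutting bounds actually need. However, compared with the paper's proof it is considerably more elaborate, and the complication (together with the lost factor~$2$) stems from a single choice: you count simplices of \emph{all} dimensions, whereas the paper counts only the top-dimensional ones.

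Once you restrict to $d$-dimensional simplices, your dichotomy disappears: every $d$-simplex of $T(P)$ contains the bottom vertex $v$ by construction, so your set $A$ is empty. The paper's entire inductive step is then just the observation that each $d$-simplex of $T(P)$ has the form $v*\sigma$ for a \emph{unique} $(d-1)$-simplex $\sigma$ lying in a \emph{unique} facet $G\not\ni v$; hence the number $|T(P)|$ of $d$-simplices is at most $\sum_i s_i$, where $s_i$ is the number of $(d-1)$-simplices in the triangulation of the $i$-th facet. Induction gives $s_i\le (d-1)!\,e_i$, and since each proper face of $P$ lies in at most $d$ facets, $\sum_i e_i\le d\cdot f(P)$, yielding exactly $d!\,f(P)$ with no loss. (If one really wants the count over all dimensions, it is within a factor $2^{d+1}$ of the top-dimensional count anyway.)

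In short: there is no genuine gap in your argument, but the cone bijection $|T(P)|=1+2|A|$ and the subsequent re-indexing by minimal faces are self-inflicted difficulties. The paper avoids them by never leaving the top-dimensional stratum, where the correspondence ``$d$-simplex $\leftrightarrow$ $(d-1)$-simplex in a facet'' is already a bijection.
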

\begin{proof} It is easy to show by induction on the dimension that the total number of simplices in the bottom-vertex triangulation is at most $d!$ times the number of faces (of all dimensions) of $P$. Indeed, if this holds for $d-1$ and we need to prove it for $P$ of dimension $d$, then apply induction for each facet, getting that the number $s_i$ of simplices in the bottom-vertex triangulation in the $i$-th face is at most $(d-1)!e_i$, where $e_i$ 
is the number of faces of all dimensions in the $i$-th face. Therefore, the total number of simplices in the bottom-vertex triangulation of $P$ is at most $\sum_i s_i\le \sum_i(d-1)!e_i$. The last sum is at most $d!$ times the number of faces of $P$ since each face of $P$ can be counted at most $d$ times in different $e_i$.
\end{proof}

Below we state and prove two theorems on canonical decomposition schemes for unit distances and diameters. Their deduction is straightforward from the corresponding complexity results, which we also state.

\begin{theorem}\label{surface scheme}Let $H$ be a set of hyperplanes in 
$\mathbb{R}^d$ and $Q\subseteq \mathbb{R}^d$ be an algebraic surface
of dimension $p$ and degree at most $k$. Then there is a $(c,r^{\lfloor (d+p)/2 \rfloor }\log r)$-canonical decomposition scheme $T$ for $H$ restricted to $Q$.
\end{theorem}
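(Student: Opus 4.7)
I would construct $T(R)$ as follows. Given $R\subseteq H$, form the hyperplane arrangement $\mathcal{A}(R)$, apply the bottom-vertex triangulation to it (after a small generic perturbation, if needed, putting $H$ in general position and ensuring no vertex of the arrangement lies on $Q$), and let $T(R)$ be the collection of relatively open simplices of this triangulation that intersect $Q$. Each such simplex lies entirely inside a single cell of $\mathcal{A}(R)$ and thus has a fixed sign pattern with respect to $R$, and the union of these simplices covers $Q$.

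\textbf{Canonical properties (C1) and (C2).} Each simplex in the bottom-vertex triangulation is determined by at most $d+1$ of the bounding hyperplanes (those supporting its facets or recursively selected as bottom vertices), so $|D(\sigma)|\le d+1$; and for any fixed defining set, only $O_d(1)$ distinct simplices can arise, giving (C1) with a constant depending only on $d$. Property (C2) follows from the standard correspondence between simplices of the bottom-vertex triangulation of $\mathcal A(R)$ and active configurations over $R$: a simplex appears in the triangulation precisely when its defining hyperplanes lie in $R$ and no other hyperplane of $R$ crosses it; the restriction to those simplices that meet $Q$ gives the desired bijection between $T(R)$ and the active configurations we care about.

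\textbf{Bounding $|T(R)|$ and the main obstacle.} A simplex of the bottom-vertex triangulation meets $Q$ only if the full-dimensional cell of $\mathcal{A}(R)$ containing it meets $Q$. Hence, by Observation~\ref{obs111}, the total number of cells of $T(R)$ is at most $d!$ times the number of faces of those cells of $\mathcal{A}(R)$ that intersect $Q$, i.e., $d!$ times the combinatorial complexity of the \emph{zone} of $Q$ in $\mathcal{A}(R)$. The zone theorem for algebraic surfaces, proved by Aronov, Pellegrini and Sharir~\cite{Zone}, bounds the zone complexity of a $p$-dimensional algebraic surface of bounded degree in an arrangement of $r$ hyperplanes in $\R^d$ by $O(r^{\lfloor (d+p)/2\rfloor}\log r)$, with the implicit constant depending on $d$ and on the degree $k$ of $Q$, yielding the claimed $(c,\,r^{\lfloor (d+p)/2\rfloor}\log r)$-canonical scheme. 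The main obstacle is invoking~\cite{Zone} in precisely the form needed: while the case $p=d-1$ is the classical zone theorem, one must verify that the exponent $\lfloor (d+p)/2\rfloor$ is the correct generalization for arbitrary $p$ (this combines the Upper Bound Theorem's $r^{\lfloor d/2\rfloor}$ bound per cell with a careful count of $d$-cells meeting a $p$-dimensional surface) and handle degeneracies of $Q$ via a small perturbation of its defining polynomial together with the perturbation of $H$.
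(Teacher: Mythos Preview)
Your approach is essentially identical to the paper's: define $T(R)$ as the simplices of the bottom-vertex triangulation of $\mathcal A(R)$ that meet $Q$, verify (C1)--(C2) from the combinatorics of that triangulation, and bound $|T(R)|$ via Observation~\ref{obs111} together with the generalised zone theorem of Aronov, Pellegrini and Sharir~\cite{Zone}. One small correction: a simplex in the bottom-vertex triangulation is the convex hull of $d+1$ arrangement vertices, each defined by $d$ hyperplanes, so the trigger set has size at most $d(d+1)$ rather than $d+1$; this does not affect the argument since only $|D(\sigma)|=O_d(1)$ is needed for (C1).
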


In order to prove the theorem above, we use the Generalised Zone Theorem of Aronov et al. as the complexity result. 

\begin{thm}[Generalised Zone Theorem of Aronov et al. \cite{Zone}]\label{zone} Let $0<p<d$ and $\sigma\subseteq \mathbb{R}^d$ be an algebraic surface of dimension $p$ of degree at most $t$, $H$ be a collection of $r$ hyperplanes in $\mathbb{R}^d$. 
Then the total number of faces bounding those cells of the arrangement that intersect 
$\sigma$ is $O_{d,p,t}(r^{\lfloor(d+p)/2 \rfloor}\log r)$.
\end{thm}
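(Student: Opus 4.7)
The plan is to combine a Clarkson--Shor random sampling argument with the classical Upper Bound Theorem for convex polytopes, which is what drives the exponent $\lfloor(d+p)/2\rfloor$. I first reduce to the case where $\sigma$ is a $p$-dimensional affine flat via a standard linearization: an algebraic surface of degree $\le t$ in $\R^d$ can be realised as the intersection of $O_{d,t}(1)$ hyperplanes in a higher ambient dimension $D=D(d,t)$, and faces of cells meeting $\sigma$ upstairs correspond, up to a factor depending only on $d,t$, to faces of cells meeting the original $\sigma$. Hence it suffices to prove the bound when $\sigma$ is a flat, which I would do by induction on the pair $(d,p)$ with $0<p<d$.

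The base case $p=0$ is a single point: only one cell of $A(H)$ contains $\sigma$, and by the Upper Bound Theorem its face complexity is $O_d(r^{\lfloor d/2\rfloor})$, matching the target exponent. The classical Zone Theorem handles the case $p=d-1$, with exponent $d-1=\lfloor(2d-1)/2\rfloor$. For the inductive step I take a random subset $R\subseteq H$ of size $s$ and form the bottom-vertex triangulation of $A(R)$ (Observation~\ref{obs111}). By Theorem~\ref{thmdecomp1} each triangulation simplex is crossed by at most $O((r/s)\log s)$ hyperplanes of $H$, and by slicing $\sigma$ with the $s$ hyperplanes of $R$ one sees that at most $O_d(s^p)$ simplices of the triangulation meet $\sigma$ (the trace of $A(R)$ on $\sigma$ is a $p$-dimensional hyperplane arrangement with $O(s^p)$ faces, each inside a distinct cell of $A(R)$).

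Within each triangulation simplex $\Delta$ meeting $\sigma$, the $O((r/s)\log s)$ crossing hyperplanes refine $\Delta$ into a local sub-arrangement whose zone complexity relative to $\sigma\cap\Delta$ is bounded by the inductive hypothesis applied to dimension $d$ and the $p$-dimensional slice $\sigma\cap\Delta$. Summing the resulting bound $O((r/s)^{\lfloor(d+p)/2\rfloor}\log^{O(1)}(r/s))$ over the $O(s^p)$ simplices meeting $\sigma$, and adding the contribution from cells of $A(R)$ whose faces lie in the zone but which we first estimate cell-by-cell via the Upper Bound Theorem (totalling $O(s^{\lfloor(d+p)/2\rfloor})$), then choosing $s$ to be a sufficiently large constant depending only on $d$, yields the bound $O_{d,p,t}(r^{\lfloor(d+p)/2\rfloor}\log r)$.

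The main obstacle is verifying that the two contributions --- from simplices of the triangulation meeting $\sigma$ and from the Upper-Bound-Theorem estimate inside each --- combine to give exactly the exponent $\lfloor(d+p)/2\rfloor$. The key arithmetic identity is $\lfloor(d+p)/2\rfloor=p+\lfloor(d-p)/2\rfloor$, which holds for both parities of $d-p$ and explains why the factor $s^p$ from counting meeting cells multiplied against $(r/s)^{p+\lfloor(d-p)/2\rfloor}$ from the inner inductive bound re-assembles into $r^{\lfloor(d+p)/2\rfloor}$ after the telescoping over $s$. A further delicate point is preventing the $\log$ factor from compounding across the levels of the induction; this can be handled by a two-stage sampling refinement as alluded to in the remark following Theorem~\ref{thmcut1}, which ensures that only a single $\log r$ survives in the final estimate.
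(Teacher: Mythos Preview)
First, a framing remark: the paper does not prove this theorem at all. It is quoted as a result of Aronov, Pellegrini and Sharir \cite{Zone} and used as a black box in the proof of Theorem~\ref{surface scheme}. So there is no ``paper's own proof'' to compare against; the relevant comparison is with the original argument in \cite{Zone}, which proceeds by a direct combinatorial charging/induction (via Euler-type relations on face counts), not by Clarkson--Shor sampling.

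Your proposal has two genuine gaps.

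\textbf{The linearization step is incorrect.} Under the Veronese lift $\R^d\hookrightarrow\R^D$, a degree-$\le t$ variety $\sigma$ becomes the intersection of the (curved, $d$-dimensional) Veronese image with a linear subspace of $\R^D$. That intersection is still a $p$-dimensional \emph{algebraic} set, not a $p$-flat; you have conflated ``$\sigma$ lies in an intersection of hyperplanes'' with ``$\sigma$ \emph{is} an intersection of hyperplanes''. Moreover, even if one could realise $\sigma$ as a flat in $\R^D$, the $r$ hyperplanes of $H$ lift to hyperplanes in $\R^D$ only as $\pi^{-1}(h_i)$, and the zone bound you would then quote carries exponent $\lfloor(D+p)/2\rfloor$, not $\lfloor(d+p)/2\rfloor$. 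There is no cheap reduction of the algebraic case to the flat case along these lines; the argument in \cite{Zone} treats the algebraic $\sigma$ directly.

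\textbf{The $O(s^p)$ count is circular.} The trace of $A(R)$ on a $p$-flat $\sigma$ indeed has $O(s^p)$ faces, so at most $O(s^p)$ \emph{cells} of $A(R)$ meet $\sigma$. But you need to count \emph{simplices} of the bottom-vertex triangulation meeting $\sigma$, and by Observation~\ref{obs111} this number is controlled (up to $d!$) by the total number of faces of those cells --- which is exactly the zone complexity of $\sigma$ in $A(R)$, the quantity you are trying to bound. So the input $O(s^p)$ that your recursion needs to beat the target exponent (via your identity $\lfloor(d+p)/2\rfloor=p+\lfloor(d-p)/2\rfloor$) is not available without already knowing the theorem at scale $s$. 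With $s$ a fixed constant this is ``only'' an $O_s(1)$ issue for the additive term, but it means your multiplicative branching factor is unjustified, and the recursion as written does not close. The actual proof in \cite{Zone} avoids this circularity by an entirely different mechanism.
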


\begin{proof}[Proof of Theorem~\ref{surface scheme}]

For each subset $S\subseteq H$ let $T(S)$ be the set of those simplices from the bottom-vertex triangulation of the arrangement of $S$ that intersect $Q$. In order to check (C1), we remark that each simplex in the triangulation is a convex hull of the vertices of a certain cell of the arrangement, and thus can be defined by at most $d(d+1)$ hyperplanes that define that cell. Moreover, the same arrangement of $O_d(1)$ hyperplanes has $O_d(1)$ vertices and thus can define only $O_d(1)$ many different simplices. In order to check property (C2), we see that, first, the trigger set of a cell must be present in order to get that cell and, second, if there is a stopper for a given simplex, i.e., if there is a hyperplane intersecting it, then the vertices of the simplex cannot belong to the same cell of the arrangement. Conversely, it is clear that a simplex appears whenever all its vertices belong to the same cell and thus if it has level 0.

Let us explain why the bound on the number of simplices holds. The simplices intersecting $\sigma$ are coming from those cells that intersect $\sigma$. By Theorem~\ref{zone} the total number of faces bounding those cells that intersect $\sigma$ is $O_{d,p,k}(n^{\lfloor(d+p)/2 \rfloor}\log n)$, thus by Observation \ref{obs111} the bound on the number of simplices follows.
\end{proof}

\begin{theorem}\label{polytope scheme}Let $H$ be a set of hyperplanes in $\mathbb{R}^d$ and $Q$.
Then there is a $(c,r^{\lfloor d/2 \rfloor})$-canonical decomposition scheme $T$ for $H$ restricted to $Q$ with $c=c(d)$.
\end{theorem}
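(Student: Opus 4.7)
The plan is to mirror the proof of Theorem~\ref{surface scheme}, but to replace the Generalised Zone Theorem (which carries the $\log r$ factor) by the Upper Bound Theorem \cite{upperbound}, thereby eliminating the logarithm and matching the $r^{\lfloor d/2 \rfloor}$ bound claimed here; this is exactly the saving previewed after the statement of Theorem~\ref{diam cutting}. In the intended usage of this lemma (inside the proof of Theorem~\ref{diam cutting}), the region $Q$ will be a convex polytope arising, after the standard paraboloid lifting, as the intersection of half-spaces each bounded by a hyperplane of $H$.

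Fix such a $Q$; for a subset $S \subseteq H$ of size $r$, denote by $Q_S$ the intersection of the corresponding half-spaces of $S$. Since $S \subseteq H$, we automatically have $Q \subseteq Q_S$. I would then define $T(S)$ to be the bottom-vertex triangulation of $Q_S$, viewed as a collection of relatively open simplices. The required covering condition $Q \subseteq \bigcup T(S)$ is immediate, and the canonical properties (C1) and (C2) can be verified exactly as in Theorem~\ref{surface scheme}: each simplex is the convex hull of $O_d(1)$ vertices, each vertex is the intersection of at most $d$ hyperplanes, so trigger sets have constant size and only $O_d(1)$ distinct simplices can share the same trigger set; further, a simplex $\Delta$ appears in $T(R)$ precisely when its defining hyperplanes lie in $R$ and no hyperplane of $R$ crosses it, which matches the active configurations in $\Pi(R)$ under the map $\chi$.

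The new ingredient is (C3): since $Q_S$ is a $d$-polytope with at most $r$ facets, the Upper Bound Theorem yields at most $O_d(r^{\lfloor d/2 \rfloor})$ faces of all dimensions, and Observation~\ref{obs111} then converts this into the same bound on the number of simplices in the bottom-vertex triangulation. The main technical obstacle I anticipate is enforcing a general position assumption — namely that $Q_S$ be simple and that no two of its vertices share an $x_1$-coordinate — so that both the Upper Bound Theorem and the recursive definition of the bottom-vertex triangulation apply in their cleanest form. This is handled by an infinitesimal symbolic perturbation of the hyperplanes in $H$ together with a generic rotation of coordinates, along the lines of the perturbation arguments in Matou\v sek's book \cite{matousek}; the structural conclusions of the decomposition scheme then transfer back to the original (possibly degenerate) arrangement by a standard limiting argument.
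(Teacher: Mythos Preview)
Your proposal is correct and follows essentially the same approach as the paper: define $T(S)$ to be the bottom-vertex triangulation of the single cell of the arrangement of $S$ containing $Q$ (your $Q_S$), verify (C1) and (C2) exactly as in Theorem~\ref{surface scheme}, and obtain (C3) by combining the Upper Bound Theorem with Observation~\ref{obs111}. Your added discussion of what $Q$ should be and of the general-position perturbation is more explicit than the paper's, but the argument is the same.
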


We will need the Upper Bound Theorem.

\begin{thm}[The Upper Bound Theorem for polytopes \cite{upperbound}]
Let $P$ be a polytope in $\mathbb{R}^d$ with $r$ facets. Then the total number of faces of $P$ is $O_d\left (r^{\lfloor d/2\rfloor}\right )$. 
\end{thm}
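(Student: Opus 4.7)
The plan is to follow McMullen's classical $h$-vector argument. First, I would polar-dualise: if $P$ has $r$ facets, then the polar dual $P^\circ$ has $r$ vertices, and since polarity reverses the face lattice, the total number of faces of $P$ equals that of $P^\circ$. So it suffices to show that any $d$-polytope with $r$ vertices has $O_d(r^{\lfloor d/2 \rfloor})$ faces. Second, a standard pulling argument lets me reduce to the simplicial case: pulling a vertex of a non-simplicial $d$-polytope only weakly increases each $f_i$ while preserving the vertex count, so after pulling all vertices I may assume $P^\circ$ is simplicial with $r$ vertices.

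For a simplicial $d$-polytope $Q$ with $r$ vertices, I would introduce the $h$-vector $(h_0,\ldots,h_d)$ via
\[
\sum_{i=0}^{d} h_i\, t^{d-i} \;=\; \sum_{i=0}^{d} f_{i-1}(Q)\,(t-1)^{d-i},
\]
with the convention $f_{-1} := 1$. The Dehn--Sommerville relations $h_i = h_{d-i}$ then follow from the Euler relations applied to the boundary sphere $\partial Q$. By the Bruggesser--Mani theorem, $\partial Q$ is shellable; fixing a line shelling, the quantity $h_i$ admits the nonnegative combinatorial interpretation as the number of facets whose \emph{restriction face} at the moment of shelling has cardinality exactly $i$. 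In particular, $h_i \ge 0$.

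The main step, and the one I expect to be the main obstacle, is the inequality $h_i(Q) \le \binom{r-d+i-1}{i}$. Here I would fix a generic linear functional on $\mathbb{R}^d$ and use the induced line shelling of $\partial Q$ (following the ordering of facets by when they become visible from a moving point on a generic line). For a facet whose restriction has size exactly $i$, the $i$ restriction vertices must be chosen so that the other $d-i$ vertices of the facet all lie \emph{above} them in the chosen height order; bounding the number of such configurations via a straightforward subset count gives the desired binomial bound.

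Combining $h_i \le \binom{r-d+i-1}{i}$ with $h_i = h_{d-i}$ yields $h_i \le \binom{r-d+\lfloor d/2 \rfloor - 1}{\lfloor d/2 \rfloor} = O_d(r^{\lfloor d/2 \rfloor})$ for every $i$. Inverting the defining relation gives
\[
f_{k-1}(Q) \;=\; \sum_{i=0}^{k}\binom{d-i}{k-i}\,h_i \;=\; O_d\!\left(r^{\lfloor d/2\rfloor}\right)
\]
for each $k \in \{0,\ldots,d\}$, and summing over $k$ bounds the total number of faces of $Q$, hence of $P$, by $O_d(r^{\lfloor d/2\rfloor})$. The rest of the proof is essentially polar duality, pulling, and linear algebra; the combinatorial bound on $h_i$ via shellability is where the real work lies.
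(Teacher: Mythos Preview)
The paper does not give its own proof of this statement: the Upper Bound Theorem is quoted as a black box from McMullen~\cite{upperbound} and is used only as an input to bound the size of the bottom-vertex triangulation in Theorem~\ref{polytope scheme}. There is therefore nothing in the paper to compare your argument against.

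Your outline is the standard McMullen route (polar duality to pass from $r$ facets to $r$ vertices, pulling to reduce to simplicial, Dehn--Sommerville for the $h$-vector, Bruggesser--Mani shellability to get $h_i\ge 0$ and the combinatorial interpretation, and finally the bound $h_i\le\binom{r-d+i-1}{i}$), and it is correct as a plan. One small imprecision: the line ``$h_i \le \binom{r-d+\lfloor d/2\rfloor-1}{\lfloor d/2\rfloor}$ for every $i$'' is not how the two ingredients combine. What you actually get is $h_i\le\binom{r-d+i-1}{i}$ for $i\le\lfloor d/2\rfloor$ and, via $h_i=h_{d-i}$, the bound $h_i\le\binom{r-i-1}{d-i}$ for $i\ge\lceil d/2\rceil$; in either range the relevant binomial is a polynomial in $r$ of degree at most $\lfloor d/2\rfloor$, which is what you need. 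The step you flag as the main obstacle---proving $h_i\le\binom{r-d+i-1}{i}$ from the shelling---is indeed the substantive part; your sketch of it (counting facets by their restriction sets in a generic line shelling) is the right idea, though the actual injectivity/counting argument requires a bit more care than ``a straightforward subset count''.
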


\begin{proof}[Proof of Theorem~\ref{polytope scheme}]
For a subset $S\subseteq H$ we define $T(S)$ as the simplices in the decomposition of the cell of the arrangement of $S$ that contains $Q$. The triangulation is the same as in the proof of  Theorem~\ref{surface scheme}, and so the properties (C1), (C2) are checked analogously.

To check property (C3), we have to show that the number of simplices in the triangulation is at most $c|S|^{\lfloor r/2 \rfloor}$. Thus follows from combining Observation~\ref{obs111} with the Upper Bound Theorem. 
\end{proof}

\subsection{Putting everything together: cutting results}\label{sec24}
For convenience, we restate the cutting results. The proof scheme is the same for all of the results, and the proof of the last theorem includes all the steps needed to prove the two theorems before. Thus we prove only the last theorem.

\begin{thm} Let $\Sigma$ be a collection of $n$ algebraic surfaces of degree at most $t$ in $\mathbb{R}^d$. Then the following is true for any $\varepsilon>0$ and $1\leq r \leq n$.
\begin{enumerate}
\item[(i)] \cite{curvecut},\cite{Koltun} If $d=2,3,4$ then there is a subdivision $\Xi$ of $\mathbb{R}^d$ of size $O_{t,\eps}\left ( r^{d+\eps}\right)$ which is a $1/r$-cutting $\Xi$ of $\Sigma$. Moreover, for any set of $m$ points $P\subseteq\mathbb{R}^d$ the subdivision $\Xi$ can be chosen so that each cell contains at most $m/r^d$ points of $P$.

\item[(ii)] \cite{Koltun} If $d\geq 5$ then there is a subdivision $\Xi$ of $\mathbb{R}^d$ of size $O_{d,t,\varepsilon}\left (r^{2d-4+\varepsilon}\right )$ which is a $1/r$-cutting of $\Sigma$ of size $O_{d,t,\varepsilon}\left (r^{2d-4+\varepsilon}\right )$. Moreover, for any set of $m$ points $P\subseteq\mathbb{R}^d$ the subdivision $\Xi$ can be chosen so that each cell contains at most $m/r^{2d-4}$ points of $P$.
\end{enumerate}
\end{thm}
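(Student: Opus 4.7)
The approach is to combine the canonical decomposition schemes already provided by Theorem~\ref{general scheme} with the abstract random-sampling result in Theorem~\ref{thmdecomp1}, and then add a light post-processing step to enforce the bound on the number of points per cell. Concretely, for part (ii) (which subsumes (i) up to the exponent), we set $g(r):=r^{2d-4+\eps'}$ for a small parameter $\eps'>0$ to be tuned, and let $T$ be the $(C,g(r))$-canonical decomposition scheme for $\Sigma$ guaranteed by Theorem~\ref{general scheme}; for $d\le 4$ we instead take $g(r)=r^{d+\eps'}$. The configuration space associated with $T$ satisfies hypotheses (C1)--(C3).

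Next, I would set a sampling parameter $r_0$ and sample $R\subset\Sigma$ uniformly at random with $|R|=r_0$. By Theorem~\ref{thmdecomp1}, with probability $>1/2$ every cell of $T(R)$ is crossed by at most $c(n/r_0)\log r_0$ surfaces of $\Sigma$, and $|T(R)|\le C g(r_0)$. To turn this into a $1/r$-cutting, I choose $r_0=Kr\log r$ for a sufficiently large constant $K=K(c,t,d,\eps)$ so that $c(n/r_0)\log r_0\le n/r$. Fixing such a good sample yields a subdivision $\Xi$ of $\R^d$ of size at most $C g(Kr\log r) = O_{d,t,\eps'}\!\left((r\log r)^{2d-4+\eps'}\right) = O_{d,t,\eps}\!\left(r^{2d-4+\eps}\right)$ after choosing $\eps'$ small enough to absorb the logarithmic factor into $r^\eps$ (and similarly $O_{t,\eps}(r^{d+\eps})$ for $d\le 4$). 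This gives the first, ``no points'' form of both parts of the statement.

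Finally, to enforce the requirement that every cell contain at most $m/r^{2d-4}$ (respectively $m/r^d$) points of a given set $P\subseteq\R^d$, I refine $\Xi$ cell by cell following the Remark in the paper. For each cell $\tau\in\Xi$ containing $n_\tau$ points of $P$, set $\beta_\tau:=n_\tau r^{2d-4}/m$; if $\beta_\tau>1$, cut $\tau$ with $\lfloor\beta_\tau\rfloor$ parallel hyperplanes chosen so that each resulting subcell holds at most $m/r^{2d-4}$ points of $P$. The number of new cells created in this step is at most $\sum_\tau (\lfloor\beta_\tau\rfloor+1)\le \sum_\tau n_\tau r^{2d-4}/m + |\Xi| \le r^{2d-4}+|\Xi|$, which is of the same order as $|\Xi|$ and hence hidden in the $O$-notation. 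Because the refining hyperplanes only further subdivide existing cells, every new cell is still crossed by at most $n/r$ surfaces of $\Sigma$, so the $1/r$-cutting property is preserved. The argument for $d\le 4$ is identical with $2d-4$ replaced by $d$.

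The only real obstacle here is the choice of $\eps'$ small enough to swallow the $(\log r)^{2d-4+\eps'}$ factor into an arbitrary $r^\eps$ slack; this is a routine tuning once Theorems~\ref{general scheme} and \ref{thmdecomp1} are in hand. The harder, geometric ingredients (the Zone-type complexity bounds giving the canonical scheme, and the verification of (C1)--(C3) for Tarski/bottom-vertex cells) have already been handled in Section~\ref{sec23}.
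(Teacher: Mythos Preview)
Your proposal is correct and follows essentially the same route as the paper: invoke the canonical decomposition scheme of Theorem~\ref{general scheme}, oversample so that the $\log$-factor from Theorem~\ref{thmdecomp1} is absorbed into the $\eps$-slack, and then refine cells by parallel hyperplanes exactly as in the Remark to control the point count. The only cosmetic difference is that the paper oversamples with $r_0=r^{1+\eps/d}$ (see the proof of the last cutting theorem in the Appendix, which it says covers this one), whereas you take $r_0=Kr\log r$; both choices kill the logarithm and yield the same final bound.
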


\begin{thm}[\cite{AS}] Let $\Sigma_1,\dots,\Sigma_k$ be $k$ sets of spheres in $\mathbb{R}^d$. Then the following statement hold for every $1\leq r \leq n$ and every $\varepsilon>0$.
\begin{enumerate}
\item There is a subdivision $\Xi$ of $\mathbb{R}^d$ of size $O_{d,k,\varepsilon}\left (r^{d+\varepsilon}\right )$ which is a $1/r$-cutting of each $\Sigma_i$. Moreover, for any set of $m$ points $P\subseteq \mathbb{R}^d$ the subdivision $\Xi$ can be chose so that each cell contains at most $m/r^d$ points of $P$.
\item If $\mathbb{S}$ is sphere of dimension $\lambda$ in $\mathbb{R}^d$, then there is  a subdivision of $\mathbb S$ of size $O_{d,k,\varepsilon}\left (r^{\lambda+\varepsilon}\right )$, which is a $1/r$-cutting of each of $\Sigma_i$. Moreover, for any set of $m$ points $P\subseteq \mathbb{S}$ the subdivision $\Xi$ can be chose so that each cell contains at most $m/r^d$ points of $P$. 
\end{enumerate}
\end{thm}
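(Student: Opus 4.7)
The plan is to reduce both parts of the theorem to the abstract probabilistic framework of Theorem~\ref{thmdecomp2}, applied to a decomposition scheme built from a hyperplane arrangement. The key bridge between spheres and hyperplanes is the standard paraboloid lifting.

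For part~(1), I would lift $\mathbb{R}^d$ to the paraboloid $P = \{(x, \|x\|^2) : x \in \mathbb{R}^d\} \subset \mathbb{R}^{d+1}$ via $\iota(x) = (x, \|x\|^2)$. Each sphere $\|x-c\|^2 = R^2$ in $\mathbb{R}^d$ corresponds bijectively to the intersection of $P$ with the hyperplane $y = 2c\cdot x - \|c\|^2 + R^2$ in $\mathbb{R}^{d+1}$; moreover, a point $p$ lies strictly inside (respectively on, strictly outside) the sphere precisely when $\iota(p)$ lies strictly below (respectively on, strictly above) the associated hyperplane. Hence crossings and containments of cells on $P$ by the lifted hyperplane families $H_1, \dots, H_k$ correspond bijectively to crossings and containments of cells in $\mathbb{R}^d$ by the sphere families $\Sigma_1, \dots, \Sigma_k$. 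Theorem~\ref{surface scheme} applied with $Q = P$ (dimension $p = d$ in the ambient space $\mathbb{R}^{d+1}$) provides a $(c, r^d \log r)$-canonical decomposition scheme for $H_1 \cup \dots \cup H_k$ restricted to $P$. Feeding this into Theorem~\ref{thmdecomp2} with a random sample of size $r' = \Theta(r \log r)$ from each $H_i$ yields a decomposition of $P$ into $O_{d,k,\varepsilon}(r^{d+\varepsilon})$ cells each of which is crossed by at most $|\Sigma_i|/r$ members of $H_i$ for every~$i$; projecting back through $\iota^{-1}$ gives the desired subdivision of $\mathbb{R}^d$.

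For part~(2), I would first restrict everything to the $(\lambda+1)$-dimensional affine hull of $\mathbb{S}$, inside which $\mathbb{S}$ is a full $\lambda$-sphere and each $\sigma \cap \mathbb{S}$ is a sub-sphere. Applying a stereographic projection from a generic point of $\mathbb{S}$ to $\mathbb{R}^\lambda$ sends each $\sigma \cap \mathbb{S}$ to either a sphere or a hyperplane in $\mathbb{R}^\lambda$. A paraboloid lifting $\mathbb{R}^\lambda \to \mathbb{R}^{\lambda+1}$ then turns each of these into a hyperplane in $\mathbb{R}^{\lambda+1}$: spheres lift as in part~(1), while an affine hyperplane $\{a\cdot x = b\}$ is associated with the hyperplane $\{(x,y) \in \mathbb{R}^{\lambda+1} : a\cdot x = b\}$, whose intersection with the paraboloid reproduces the original hyperplane after projection. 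Theorem~\ref{surface scheme} with $p = \lambda$ in ambient dimension $\lambda + 1$ then supplies a decomposition scheme of complexity $O(r^\lambda \log r)$, and Theorem~\ref{thmdecomp2} yields the desired $O_{d,k,\varepsilon}(r^{\lambda+\varepsilon})$-sized $1/r$-cutting on $\mathbb{S}$ after pulling back through the paraboloid lifting and the stereographic projection.

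Finally, the additional bounds on the number of points per cell follow from the remark after Theorem~\ref{diam cutting}: any cell containing too many points of $P$ can be subdivided further by a bounded number of hyperplanes, which multiplies the total cell count by at most a constant factor absorbed by the $O_{d,k,\varepsilon}$ notation. The main obstacle is the careful bookkeeping around the logarithmic factors: Theorem~\ref{thmdecomp2} produces crossings of order $n_i (\log r)/r$ rather than $n_i/r$, so one must sample $\Theta(r \log r)$ hyperplanes from each family and verify that the resulting $(\log r)^{O(1)}$ blow-up in the cell count is swallowed by the $r^\varepsilon$ slack in the exponent, as well as confirm that the generalised zone theorem underlying Theorem~\ref{surface scheme} applies cleanly to the paraboloid, which is a constant-degree algebraic surface.
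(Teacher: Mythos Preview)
Your proposal is correct and essentially follows the paper's approach. Part~(1) is identical: paraboloid lift to $\mathbb{R}^{d+1}$, apply Theorem~\ref{surface scheme} to the paraboloid (a degree-$2$ surface of dimension $d$), invoke Theorem~\ref{thmdecomp2}, and absorb the polylogarithmic overhead into the $r^{\varepsilon}$.

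For part~(2) there is a minor difference in the reduction. The paper performs the \emph{same} paraboloid lift $\varphi:\mathbb{R}^d\to\mathbb{R}^{d+1}$ as in part~(1) and observes that $\varphi(\mathbb{S})$ is the intersection of the paraboloid with a $(\lambda+1)$-dimensional affine flat (since a $\lambda$-sphere is an intersection of $d-\lambda$ many $(d-1)$-spheres, each lifting to a hyperplane). Restricting the ambient space to that flat, one is back in the situation of part~(1) with ambient dimension $\lambda+1$. Your route---restrict to the affine hull of $\mathbb{S}$, stereographically project to $\mathbb{R}^{\lambda}$, then lift to $\mathbb{R}^{\lambda+1}$---reaches the same endpoint but introduces a second map and the case distinction spheres vs.\ hyperplanes (sub-spheres through the pole), as well as the need to choose the pole generically. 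The paper's single-lift argument avoids these bookkeeping issues; your version is equally valid but slightly less economical.
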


\begin{theorem} Let $\Sigma_1,\dots,\Sigma_k$ be $k$ sets of spheres in $\mathbb{R}^d$ and $B$ be the intersection of balls bounded by the members of $\bigcup_{i\in[k]} \Sigma_i$. Then the following hold for every $1\leq r \leq n$ and $\varepsilon>0$.
\begin{enumerate}
\item There is a subdivision $\Xi$ of $B$ of size $O_{d,k,\varepsilon}\left (r^{\lfloor (d+1)/2 \rfloor+\varepsilon}\right )$, which is a $1/r$-cutting of each $\Sigma_i$. Moreover, for any set of $m$ points $P\subseteq B$ the subdivision $\Xi$ can be chosen so that each cell contains at most $m/r^{\lfloor (d+1)/2 \rfloor}$ points of $P$.

\item If $\mathbb{S}$ is a sphere of dimension $\lambda$ in $\mathbb{R}^d$, then there is subdivision $\Xi$ of the set $\mathbb S\cap B$ of size $O_{d,k,\varepsilon}\left (r^{\lfloor (\lambda+1)/2 \rfloor+\varepsilon}\right )$, which is a $1/r$-cutting of each  $\Sigma_i$. Moreover, for any set of $m$ points $P\subseteq B$ the subdivision $\Xi$ can be chosen so that each cell contains at most $m/r^{\lfloor (\lambda+1)/2 \rfloor}$ points of $P$.
\end{enumerate}
\end{theorem}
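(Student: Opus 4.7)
The plan is to reduce both parts of the theorem to the hyperplane-in-polytope decomposition scheme (Theorem~\ref{polytope scheme}) via the classical paraboloid lifting $\phi\colon \R^d\to \R^{d+1}$, $\phi(x)=(x,|x|^2)$. Under $\phi$ each sphere $\sigma\in\bigcup_i\Sigma_i$ becomes a hyperplane $H_\sigma$ in $\R^{d+1}$, and the closed ball bounded by $\sigma$ corresponds to the halfspace $\hat B_\sigma$ lying weakly below $H_\sigma$ (since $x\in B_\sigma$ iff $\phi(x)$ is weakly below $H_\sigma$). Consequently the polyhedron $\hat Q:=\bigcap_\sigma \hat B_\sigma\subseteq\R^{d+1}$ is itself a single cell of the arrangement $\mathcal H:=\{H_\sigma:\sigma\in\bigcup_i\Sigma_i\}$, and $\phi(B)\subseteq\hat Q$.

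For part~(1), I would build a $(c,r^{\lfloor (d+1)/2\rfloor})$-canonical decomposition scheme for $\mathcal H$ restricted to $\hat Q$ exactly as in the proof of Theorem~\ref{polytope scheme}: for every $S\subseteq\mathcal H$, take the cell $\hat Q_S$ of the arrangement of $S$ with the same sign pattern as $\hat Q$ (a polyhedron with at most $|S|$ facets) and triangulate it by the bottom-vertex triangulation. The Upper Bound Theorem applied in $\R^{d+1}$ gives $O_d(|S|^{\lfloor (d+1)/2\rfloor})$ faces, and Observation~\ref{obs111} transfers this to the number of simplices; properties (C1) and (C2) are verified verbatim as in Theorems~\ref{surface scheme}--\ref{polytope scheme}. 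Feeding this scheme into Theorem~\ref{thmdecomp2} with the $k$ families $\{H_\sigma:\sigma\in\Sigma_i\}$ and sample size $\Theta(r\log r)$ yields a random decomposition of $\hat Q$ of size $O_{d,k,\varepsilon}(r^{\lfloor (d+1)/2\rfloor+\varepsilon})$ in which every cell $\tau$ is crossed by at most $n_i/r$ hyperplanes of each family. I then vertically project each simplex $\tau$ to the region $R_\tau:=\pi(\tau)\cap B$ in $\R^d$; the $R_\tau$'s cover $B$. If a sphere $\sigma\in\Sigma_i$ crosses $R_\tau$, then $\phi(R_\tau)\subseteq \tau$ contains points strictly below and weakly above $H_\sigma$, so $H_\sigma$ crosses $\tau$, and the bound on $H_\sigma$-crossings becomes the required $1/r$-cutting property for $\Sigma_i$ on $B$. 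The point-count requirement is met by further slicing any over-populated cell by auxiliary hyperplanes, as justified in the Remark.

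For part~(2), I would localize to the affine span $U$ of $\mathbb S$, which has dimension $\lambda+1$, and perform the analogous lifting $\phi_U\colon U\to U\times\R\cong\R^{\lambda+2}$. Each sphere $\sigma\in\bigcup_i\Sigma_i$ meeting $U$ restricts to a sphere in $U$ and lifts to a hyperplane in $\R^{\lambda+2}$; the set $B\cap U$ lifts to a polyhedron $\hat Q_U$; and the sphere $\mathbb S$ itself lifts to a hyperplane $H_{\mathbb S}$ in $\R^{\lambda+2}$. The relevant object is the $(\lambda+1)$-dimensional polyhedron $H_{\mathbb S}\cap\hat Q_U$, which contains the lifted copy of $\mathbb S\cap B$. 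Running the same polytope-cutting scheme inside $H_{\mathbb S}$ and invoking the Upper Bound Theorem in $\R^{\lambda+1}$ gives the complexity $O_{d,k,\varepsilon}(r^{\lfloor (\lambda+1)/2\rfloor+\varepsilon})$ and the desired cutting after projection. The main obstacle, as I see it, is bookkeeping the geometric dictionary between $\R^d$, the lifted space, and the paraboloid---in particular, making sure that every sphere crossing in $\R^d$ shows up as a hyperplane-crossing of the enclosing simplex, and that the projected cells (or their intersections with $B$ and $\mathbb S$) really partition the target set rather than a strictly larger projection. All of this is routine, but it has to be done carefully enough to absorb the unavoidable $\log r$ factor from Theorem~\ref{thmdecomp2} into the $\varepsilon$ in the exponent.
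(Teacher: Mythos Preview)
Your approach is essentially the paper's: lift via the paraboloid map so that spheres become hyperplanes and $B$ becomes a polytope in $\R^{d+1}$, invoke the polytope decomposition scheme (Upper Bound Theorem) together with the random-sampling Theorem~\ref{thmdecomp2}, and pull back; for part~(2) you restrict to the $(\lambda+1)$-flat spanned by $\mathbb S$ and then lift, whereas the paper lifts first and then restricts to the flat carrying $\varphi(\mathbb S)$---these are equivalent. One small slip: the pulled-back cells should be $R_\tau=\varphi^{-1}(\tau)$ rather than $\pi(\tau)\cap B$ (with the vertical projection the $R_\tau$ overlap and your claimed inclusion $\phi(R_\tau)\subseteq\tau$ fails), but you evidently intend this since you use $\phi(R_\tau)\subseteq\tau$ in the very next clause.
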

\begin{proof}
  We will use the transformation $\varphi: \mathbb{R}^d\to \mathbb{R}^{d+1}$ defined by $\varphi(x_1,\dots,x_d)=(x_1,\dots,x_d,x_1^2+\dots +x_d^2)$ to lift spheres of $\mathbb{R}^d$ to hyperplanes of $\mathbb{R}^{d+1}$. Note that $\varphi$ maps a sphere defined by $x_1^2+\dots +x_d^2=\alpha_1 x_1+\dots +\alpha_d x_d+\beta$ to the hyperplane defined by $x_{d+1}=\alpha_1 x_1+\dots +\alpha_d x_d+\beta$, and it maps a ball bounded by the sphere $x_1^2+\dots +x_d^2=\alpha_1 x_1+\dots +\alpha_d x_d+\beta$ to the half-space bounded by $x_{d+1}=\alpha_1 x_1+\dots +\alpha_d x_d+\beta$. Further note that $\varphi(\mathbb{R}^d)$ is a paraboloid $Q$.
  
  Applying $\varphi$ to the arrangement, we get arrangements $\Sigma'_1,\ldots, \Sigma'_k$ of hyperplanes in $\R^{d+1}$. 
  
   If $k=1$ then we apply  Theorem~\ref{polytope scheme} to $\Sigma':=\Sigma'_1$ with $H:=\varphi(B)$. It gives a $(c,r^{\lfloor (d+1)/2\rfloor})$-canonical decomposition scheme $T$ for $\Sigma'$. Then apply Theorem~\ref{thmdecomp1} to this decomposition scheme with $r^{1+\varepsilon/d}$ and 
   $r$ sufficiently large so that \[c \frac n{r^{1+\varepsilon/d}}\log (r^{1+\varepsilon/d})<\frac nr.\] Then Theorem~\ref{thmdecomp1} guarantees us that a random sample $R$ of size $r^{1+\varepsilon/d}$ leads to a $1/r$-cutting for $\Sigma'$.   
   Moreover, the number of cells in $T(R)$ is $O_{d,\eps}\big((r^{1+\varepsilon/d})^{\lfloor (d+1)/2\rfloor}\big) = O_{d,\eps}(r^{\lfloor (d+1)/2\rfloor+\eps})$.
  
  For $k>1$ the analysis is very similar. We apply  Theorem~\ref{polytope scheme} to $\Sigma':=\Sigma'_1\cup\ldots\cup\Sigma'_k$ with $H:=\varphi(B)$, getting a $(c,r^{\lfloor (d+1)/2\rfloor})$-canonical decomposition scheme $T$ for $\Sigma'$. 
  Then we apply Theorem~\ref{thmdecomp2}.
  Finally, the number of cells is $O_{d,\eps}\big((kr^{1+\varepsilon/d})^{\lfloor (d+1)/2\rfloor}\big) = O_{k,d,\eps}(r^{\lfloor (d+1)/2\rfloor+\eps})$.
  
  Projecting the cutting back to $\R^d$ along the last coordinate, we get the desired result.
  
  For part 2, we note that $\mathbb S$ maps to a flat $\gamma$ of dimension $\lambda$. Restricting to the subspace that is spanned by $\gamma$ and the last coordinate, we get the situation identical to that in part 1, except that the dimension of the ambient space is now $\lambda+1$. The rest of the proof is identical to that in part 1.
\end{proof}

\end{document}